\documentclass[reqno]{amsart}

\usepackage{graphicx,color}
\theoremstyle{plain}

\newtheorem{theorem}{Theorem}[section]
\newtheorem{lemma}[theorem]{Lemma}
\newtheorem{remark}[theorem]{Remark}
\newtheorem{proposition}[theorem]{Proposition}

\pagestyle{plain}

\numberwithin{equation}{section}
\allowdisplaybreaks[1]

\theoremstyle{definition}

\theoremstyle{remark}


\newcommand{\BC}{{\mathbb C}}

\newcommand{\cB}{{\mathcal B}}
\newcommand{\cC}{{\mathcal C}}
\newcommand{\cF}{{\mathcal F}}
\newcommand{\cH}{{\mathcal H}}

\newcommand{\cK}{{\mathcal K}}\newcommand{\cL}{{\mathcal L}}

\newcommand{\cU}{{\mathcal U}}
\newcommand{\cX}{{\mathcal X}}
\newcommand{\cY}{{\mathcal Y}}\newcommand{\cZ}{{\mathcal Z}}

\newcommand{\bE}{{\mathbf E}}
\newcommand{\bG}{{\mathbf G}}

\newcommand{\bP}{{\mathbf P}}
\newcommand{\bR}{{\mathbf R}}
\newcommand{\bS}{{\mathbf S}}
\newcommand{\bU}{{\mathbf U}}\newcommand{\bV}{{\mathbf V}}




\newcommand{\al}{\alpha}
\newcommand{\be}{\beta}

\newcommand{\Ga}{\Gamma}
\newcommand{\de}{\delta}
\newcommand{\De}{\Delta}

\newcommand{\la}{\lambda}

\newcommand{\si}{\sigma}


\newcommand{\bDe}{{\boldsymbol\Delta}}

%



\newcommand{\ov}[1]{{\overline{#1}}}

\newcommand{\inn}[2]{\ensuremath{\langle #1,#2 \rangle}}



\newcommand{\wtil}{\widetilde}

\newcommand{\BDe}{{\boldsymbol \Delta}}

\newcommand{\bff}{{\bf f}}
\newcommand{\br}{{\bf r}}
\newcommand{\bs}{{\bf s}}


\newcommand{\bDelta}{{\boldsymbol \Delta}}

\begin{document}

\title[The free noncommutative setting]{Bounded Real Lemma and
structured singular value versus diagonal scaling: the free
noncommutative setting}

\author[J.A.~Ball]{Joseph A. Ball}
\address{Department of Mathematics,
Virginia Tech,
Blacksburg, VA 24061-0123, USA}
\email{joball@math.vt.edu}

\author[G.~Groenewald]{Gilbert J.~Groenewald}
\address{Department of Mathematics, Unit for BMI, North-West University,
Potchefstroom, 2531 South Africa}
\email{Gilbert.Groenewald@nwu.ac.za}

\author[S.~ter Horst]{Sanne Ter Horst}
\address{Department of Mathematics, Unit for BMI, North-West University,
Potchefstroom, 2531 South Africa}
\email{Sanne.TerHorst@nwu.ac.za}

\subjclass[2010]{Primary 93D09: Secondary 93B28, 13F25, 47A60}

\keywords{Structured singular value; Diagonal scaling; Free noncommutative function; Formal power series in free
noncommuting indeterminates}

\begin{abstract}
The structured singular value (often referred to simply as $\mu$) was
introduced independently by Doyle and Safanov as a tool for analyzing
robustness of system stability and performance in the presence of
structured uncertainty in the system parameters.  While the
structured singular value provides a necessary and sufficient
criterion for robustness with respect to a structured ball of
uncertainty, it is notoriously difficult to actually compute. The
method of diagonal (or simply "D") scaling, on the other hand,
provides an easily computable upper bound (which we call
$\widehat \mu$) for the structured singular
value, but provides an exact evaluation of $\mu$ (or even a useful
upper bound for $\mu$) only in special cases.  However it was
discovered in the 1990s that a certain enhancement of the uncertainty structure
(i.e., letting the uncertainty parameters be freely
noncommuting linear operators on an infinite-dimensional separable Hilbert
space) resulted in the $D$-scaling procedure leading to an exact
evaluation of $\mu_{\text{enhanced}}$ ($\mu_{\text{enhanced}} =
\widehat \mu$), at least for the tractable special cases which were
analyzed in complete detail.
On the one hand this enhanced uncertainty has
some appeal from the physical point of view: one can allow the
uncertainty in the plant parameters to be time-varying, or more
generally, one can catch the uncertainty caused by the designer's
decision not to model the more complex (e.g.~ nonlinear) dynamics of
the true plant.  On the other hand, the precise mathematical
formulation of this enhanced uncertainty structure makes contact with
developments in the growing theory of analytic functions in freely
noncommuting arguments and associated formal power series in freely
noncommuting indeterminates. In this article we obtain the
$\widetilde \mu = \widehat \mu$ theorem for a more satisfactory
general setting.
\end{abstract}

\maketitle

\section{Introduction}\label{S:intro}
The structured singular value was introduced
independently by Doyle \cite{DoyleMu} and Safanov \cite{SafanovMu};
see \cite{ZDG} for a thorough more recent treatment.  Let $N$ be a
positive integer with a partitioning $N = n_1 + \cdots + n_\bs + m_1 +
\cdots + m_\bff$ for positive integers $n_i$ ($i = 1, \dots, \bs$) and
$m_j$ ($j=1, \dots, \bff$).  We let $\bDelta$ denote the set of $N
\times N$ matrices of the form
\begin{align}
\bDelta =   \{ {\rm   diag} [ \delta_1 I_{n_1}, \dots, \delta_\bs
I_{n_\bs}, \Delta_1, \dots, \Delta_\bff ] \colon  \delta_i \in {\mathbb C},\,
 \Delta_j \in {\mathbb C}^{m_j \times m_j} \}.
 \label{structure}
\end{align}
For an $N \times N$ matrix $M \in {\mathbb C}^{N \times N}$, we
define the {\em structured singular value} of $M$ with respect to $\bDe$ by
\begin{equation}  \label{ssv}
\mu_{\bDelta}(M) : = \frac{1}{ {\rm min} \{ \| \Delta \| \colon
\Delta \in \bDelta, \, 1 \in \sigma(M \Delta) \} },
\end{equation}
where in general $\sigma(X)$ denotes the spectrum of the square
matrix $X$.  Motivation for this notion comes from robust control
theory (see \cite{ZDG, DP}).

In the case where $\bs=0$ and $\bff = 1$,
the structured singular value $\mu_{\bDelta}(M)$ collapses to the
largest singular value $\overline{\sigma}_1(M)$ of $M$ or,
equivalently, the induced operator norm of $M$ as an operator on
${\mathbb C}^N$, where ${\mathbb C}^N$ is given the standard
$2$-norm.  A key property of the largest singular value from the
point of view of systems and control follows from the Small Gain
Theorem.\smallskip

\begin{theorem}[Small Gain Theorem]\label{T:SmallGain}
{\sl Let $M \in {\mathbb C}^{N \times N}$ such that
$\overline{\si}_1(M)<1$. Then $I - \De M$ is invertible for all $\De\in\BC^{N\times N}$
with $\|\De\| \le 1$.}\smallskip
\end{theorem}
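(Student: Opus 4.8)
The plan is to reduce everything to the single observation, already recorded in the text, that $\overline{\si}_1(M)$ equals the operator norm $\|M\|$ of $M$ regarded as an operator on $\BC^N$ equipped with the standard Euclidean ($2$-)norm. Granting this identification, the proof is a one-line submultiplicativity estimate followed by a standard Neumann-series (or contradiction) argument.

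First I would note that for any $\De\in\BC^{N\times N}$ with $\|\De\|\le 1$ one has
\[
\|\De M\|\le\|\De\|\,\|M\|\le\|M\|=\overline{\si}_1(M)<1,
\]
so that $\De M$ is a strict contraction. Second, I would invoke (or prove in passing) the elementary fact that if $T\in\BC^{N\times N}$ satisfies $\|T\|<1$, then $I-T$ is invertible: the Neumann series $\sum_{k=0}^\infty T^k$ converges in $\BC^{N\times N}$ because $\sum_{k=0}^\infty\|T^k\|\le\sum_{k=0}^\infty\|T\|^k=(1-\|T\|)^{-1}<\infty$, and a direct telescoping computation shows its sum is a two-sided inverse of $I-T$. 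Applying this with $T=\De M$ yields the invertibility of $I-\De M$ for every admissible $\De$, which is the assertion. An even shorter route avoids series entirely: were $I-\De M$ singular, there would exist a nonzero $x\in\BC^N$ with $x=\De M x$, whence $\|x\|\le\|\De M\|\,\|x\|<\|x\|$, a contradiction.

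There is essentially no hard step here, and the hypothesis $\overline{\si}_1(M)<1$ enters in exactly one place: the identification $\overline{\si}_1(M)=\|M\|$. This follows from the singular value decomposition $M=U\Si V^*$, with $U,V$ unitary and $\Si=\diag[\overline{\si}_1(M),\dots,\overline{\si}_N(M)]$, together with the unitary invariance of the Euclidean norm. Once that is in hand the theorem is immediate, so I would expect the author's proof to be correspondingly brief; the only genuine ``obstacle'' is simply being explicit about which standard facts (SVD characterization of the largest singular value, invertibility of $I-T$ for $\|T\|<1$) are being taken for granted.
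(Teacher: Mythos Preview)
Your proof is correct. The paper itself does not supply a proof of Theorem~\ref{T:SmallGain}; it is stated as a well-known classical fact used only as motivation in the introduction, so there is no ``paper's own proof'' to compare against. Your argument via the identification $\overline{\si}_1(M)=\|M\|$ followed by the Neumann-series (or the equivalent contradiction) argument is exactly the standard justification one would expect, and your anticipation that the proof should be brief is accurate to the point that the authors omit it entirely.
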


The systems and control interpretation of this result is that
$\overline{\si}_1(M)<1$ implies that perturbation of the `plant' $M$
with a multiplicative perturbation $\Delta$ does not affect stability
of the closed-loop feedback as long as $\| \Delta \| \le  1$.

Another well known case is when $\bs=1$ and $\bff=0$. In that case
$\mu_{\bDelta}(M)$ coincides
with the spectral radius of $M$, and hence $\mu_{\bDelta}(M)<1$ implies
that $I-\de M$ is
invertible for all $\de\in\BC$ with $|\de|\leq 1$.

There are many applications in which the uncertainty parameter
$\Delta$ is known to carry some structure, as in \eqref{structure}.
In these cases it is enough that the structured singular value
$\mu_{\bDelta}(M)$ be less than 1 to guarantee the maintenance of
stability against structured multiplicative perturbations $\Delta \in
\bDelta$ with $\| \Delta \| \le 1$.

However, it turns out that the structured singular value
$\mu_{\bDelta}(M)$ is notoriously difficult to compute in a
computationally efficient and reliable way. Indeed, computing the
exact structured singular value $\mu_{\bDelta}(M)$ is an NP-hard
problem \cite{BYDM}.

There is a convenient upper bound for $\mu_{\bDelta}(M)$ defined by
$$
\widehat \mu_{\bDelta}(M) := \inf \{ \| D M D^{-1}\| \colon D \in
\bDelta' \text{ and } D \text{ invertible}\},
$$
where $\bDelta'$ denotes the commutant of $\bDelta$ in $\BC^{N\times N}$, that is,
\begin{equation}\label{commutant}
\bDelta' = \{ D \in {\mathbb C}^{N \times N} \colon D \De = \De
D \text{ for all } \De \in \bDelta\}.
\end{equation}
It turns out that $\widehat \mu_{\bDelta}(M)$ can be computed
accurately and efficiently. Indeed, to test whether $\widehat
\mu_\bDe(M) < 1$ it suffices to find a positive definite matrix $X \in
\bDelta'$ which solves the structured Stein inequality
$$
  M^* X M - X \prec  0.
$$
Note that the condition $X \in \bDelta'$ is equivalent to $X$ having
the block diagonal form
$$
  X = {\rm diag}[ X_1, \dots, X_\bs, x_1 I_{m_1}, \dots, x_\bff
  I_{m_\bff} ],
$$
where $X_i$ is a positive definite matrix of size $n_i\times n_i$
(for $i=1,\ldots, \bs$) and $x_j$ a positive number (for
$j=1,\ldots,\bff$). This puts the computation of $\widehat \mu_{\bDe}$
within the framework of the {\tt MATLAB} LMI toolbox.

While the general inequality $\mu_{\bDelta}(M) \le \widehat
\mu_{\bDelta}(M)$ is easily derived,  actual equality holds only in
very special cases. In particular, equality holds for all $M$ with
respect to a given choice of structure specified by nonnegative
integers $\bs$ and $\bff$ as in \eqref{structure} if and only if
$2\bs + \bff \le 3$ (see \cite{PD, ZDG, DP}). Moreover, even with
$\bs$ and $\bff$ in \eqref{structure} fixed, there is in general no bound on  the gap
between $\mu_{\bDelta}(M)$ and its upper bound $\widehat
\mu_{\bDelta}(M)$; see \cite{Treil}. Thus the compromise of using
$\widehat \mu_{\bDelta}(M)$ as a substitute for $\mu_{\bDelta}(M)$
can be arbitrarily conservative.

However, if the structure is relaxed by letting the uncertainty parameters $\delta_i$
and the matrix entries of $\Delta_j$ be operators on a separable infinite-dimensional
Hilbert space,
say on $\ell^2=\ell^{2}({\mathbb Z}_{+})$, the Hilbert space of square-summable complex
sequences
indexed by the nonnegative integers ${\mathbb Z}_{+}$. Then the modified $\mu$ is equal
to its
easily computable upper bound. To make this precise, we introduce the enhanced structure
\begin{equation}\label{enhanced}
\widetilde{\bDelta} = \{ {\rm diag} [
\widetilde{\delta}_1 \otimes I_{{\mathbb C}^{n_1}},\dots,
\widetilde{\delta}_\bs\otimes I_{{\mathbb C}^{n_\bs}},
\widetilde \Delta_1, \dots, \widetilde \Delta_\bff ] \}
\end{equation}
where each $\widetilde \delta_i \in \cL(\ell^2)$ and each $\widetilde \Delta_j
\in \cL(\ell^2_{m_j})$,
with $\ell^2_{m_j}=\BC^{m_j}\otimes\ell^2$. We replace $M \in
{\mathbb C}^{N \times N}$ with $\widetilde M = M\otimes I_{\ell^2}
\in \cL(\ell_N^2)$ and define a new variation on $\mu(M)$ by
$$
 \widetilde \mu_{\bDelta}(M)  : =
\mu_{\widetilde{\bDelta}}(I_{\ell^{2}} \otimes M).
$$
It turns out that the two notions of $\widehat \mu$ are the same:
$$
   \widehat \mu_{\widetilde{\bDelta}}(I_{\ell^{2}} \otimes M ) =
\widehat \mu_{\bDelta}(M).
$$
and hence the common value $\widehat \mu_{\bDelta}(M)$ is easily
computable.  The remarkable result is that this relaxed structured
singular value is always equal to its easily computable upper bound,
i.e.,
\begin{equation}   \label{equality}
  \widetilde \mu_{\bDelta}(M) =  \widehat \mu_{\bDelta}(M).
\end{equation}
This result can be found in the dissertation of Paganini
\cite{Paganini} and is summarized in \cite{LZD} without proof;
the complete proof, as thoroughly elucidated in the
book \cite{DP} (at least for the case where $\bs = 0$ with the case
$\bs > 0$ indicated in the exercises) draws on earlier
ideas and results from Megretski-Treil \cite{MT} and Shamma
\cite{Shamma}. Also there is an interpretation of the quantity
$\widetilde \mu$ as robustness with respect to an enlarged
block-structured uncertainty; one can view
this enhanced block-structured uncertainty as allowing time-varying
uncertainty in the system parameters, or, perhaps more appealingly,
as specifying a range for the input-output pairs of the true plant,
thus allowing for unmodeled dynamics (e.g.~nonlinearities) in the
behavior of the true plant (see \cite[Chapter 8]{DP} for more
complete details).

We mention that this result is but one more instance of a general
phenomenon appearing often of late where a single-variable function
theory result fails to have a compelling or complete generalization to
the commutative multivariable setting, but does have a clean
complete generalization to the free noncommutative setting;  as for
other examples, we mention the realization theory for rational
matrix functions and for the Schur class on the unit disk (see
\cite{BGM1, BGM2, AMcCGlobal}), Helton's result on representing a polynomial as a
sum of squares \cite{Helton},  recent results in free
noncommutative real algebraic geometry \cite{CHKMcCN, HMcC2004},
results on proper analytic maps \cite{HKMcCS2009, HKMcC2011}, as well
as convexity theory \cite{HMcC2012, HMcCV2006} and
Nevanlinna-Pick interpolation \cite{AMcCPick}.

As elegant as this result is, it is incomplete from a conceptual
point of view since the structure given by \eqref{structure} is
limited in two respects:
\begin{itemize}
\item[(L1)]
There is an asymmetry between the scalar blocks and the full blocks
in \eqref{structure}.   A scalar block $\delta_i I_{n_i}$ can be
considered as a full block with size $m_i = 1$, but with a repetition
(or multiplicity) of $n_i$ possibly larger than $1$ allowed. On the
other hand, the full blocks $\Delta_j$ are considered to be
independently arbitrary with no repetitions allowed.

\item[(L2)]
All blocks are considered square.  There are interesting
multidimensional input/state/output systems where this same structure
occurs but with
nonsquare blocks (see \cite{BGM1, BGM2}).
\end{itemize}

These limitations were addressed in the work of Ball-Groenewald-Malakorn
\cite{BGM3} by making a connection with the earlier work of the same
authors on the realization theory for so-called Structured
Noncommutative Multidimensional Linear Systems (SNMLSs), including a
Kalman decomposition and state-space similarity theorem \cite{BGM1},
together with a realization theorem for a noncommutative Schur-Agler
class associated with conservative SNMLSs \cite{BGM2}.  The structure
of a SNMLS was encoded in an admissible graph $\bG$, i.e.,
bipartite graph $G$ carrying some additional structure together with a
multiplicity function; see Section \ref{S:GraphSetup} for the precise setup.
Motivation for introduction of this framework came from the quest for a more
convenient coordinate-free way to analyze structures $\bDelta$ as in
\eqref{structure} with the limitations (L1) and (L2) removed.  The
idea in \cite{BGM3} was to identify the resolvent expression $\Delta
\mapsto (I - \Delta M)^{-1}$ as an element of the associated
Schur-Agler class $\mathcal{SA}_{\bG}(\cU)$ in case $\mu_{\bG}(M) < 1$.
However, this identification required an unnecessary additional hypothesis making
the analysis in \cite{BGM3} incomplete.  One of the contributions of the present
paper is to adapt  one piece of the analysis in \cite{Paganini, DP} to
verify a key lemma (see Lemma \ref{L:DP}) which implies that this additional hypothesis
indeed can be removed  and thereby to
complete the analysis begun in \cite{BGM3}.

A second contribution of the present paper is to identify the extra ingredient needed to
show how the techniques of Dullerud-Paganini \cite{Paganini, DP} can
be adapted to get \eqref{equality} in full generality (without the
limitations (L1) and (L2)); the precise result is formulated in our Main Result (Theorem \ref{T:I}).

We also show how our Main Result itself
can be used to get an alternative proof of the realization theorem for the
noncommutative Schur-Agler class $\mathcal{SA}_{\bG}(\cU, \cY)$, at
least for the finite-dimensional case (see Remark \ref{R:mu-to-ncBRL});
thus one can argue that the
main result of \cite{BGM2} was already implicitly contained in the 1996
dissertation of Paganini \cite{Paganini}.  It is interesting to note
that the proof based on \cite{BGM3}  requires the
realization theorem for the noncommutative Schur-Agler class
$\mathcal{SA}_{\bG}(\cU, \cY)$ which ultimately relies on an
infinite-dimensional cone-separation argument, while the proof of
Dullerud-Paganini \cite{Paganini, DP} uses a more
elementary finite-dimensional cone-separation argument.
We should also mention that relatively recent results of
K\"oro\u{g}lu-Scherer \cite{KS2007} also remove the limitations (L1)
and (L2) and present still finer results concerning robust
stability/performance against a fine class of structured
uncertainties $\bDelta$ (see Remark \ref{R:slowlytimevarying} below).

The paper is organized as follows.
Section \ref{S:prelim} reviews notation and results concerning tensor
product spaces which will be needed in the sequel; this includes an
adaptation of the Douglas lemma to the higher multiplicity setup,
which is the extra ingredient needed to carry out the
Dullerud-Paganini proof of the Main Result for the higher
multiplicity situation.
In Section \ref{S:GraphSetup} we
recall the graph formalism from \cite{BGM1, BGM2, BGM3} and
reformulate the desired result \eqref{equality} in this framework for
the general setting.  In Section \ref{S:BRLproof} we identify and
prove the key lemma needed to complete the analysis
from \cite{BGM3} and thereby get our first proof of the Main Result, Theorem \ref{T:I} below.
In Section \ref{S:DPproof} we show how the
analysis of Dullerud-Paganini can be beefed up to handle the more
general case with limitations (L1) and (L2) removed.  In Section
\ref{S:BFKT} we show how the alternative enhanced structured singular value of
Bercovici-Foias-Khargonekar-Tannenbaum \cite{BFKT} can be handled by
the same type of convexity analysis as used by Dullerud-Paganini.

A preliminary version of this report was given in the conference
proceedings paper \cite{BGtH}.

\section{Preliminaries on tensor products}  \label{S:prelim}

Let $\cH$ and $\cK$ be two Hilbert spaces. We shall have use for a
fixed conjugation operator $\cC$ on $\cK$, i.e., an operator $\cC$ on
$\cK$ with the following properties:
\begin{itemize}
\item[(i)] $\cC(\al f+g)=\bar{\al}\cC(f)+\cC(g)$ \quad (anti-linear)

\item[(ii)] $\inn{\cC f}{\cC g}=\inn{g}{f}=\overline{\inn{f}{g}}$ \quad
(isometric)

\item[(iii)] $\cC^2=I$\quad (involution)
\end{itemize}
To construct such an operator, choose any orthonormal basis $\{e_{j}
\colon j \in A\}$ for $\cK$ and define $\cC$ by
$$
  \cC \colon \sum_{j \in A} c_{j} e_{j} \mapsto \sum_{j \in A}
  \overline{c}_{j} e_{j}
$$
where $\overline{c}_{j}$ is the ordinary complex conjugate of the
complex number $c_{j}$.  For convenience of notation we shall often
write $\overline{k}$ instead of $\cC k$.

The Hilbert space tensor
product $\cH \otimes \cK$ is defined as the completion of the linear
span of the pure tensor elements $h \otimes k$ where the inner
product on pure tensors is given by
$$
\langle h \otimes k,\, h' \otimes k' \rangle_{\cH \otimes \cK} =
\langle h,h'\rangle_{\cH}\,  \langle k, k' \rangle_{\cK}.
$$
We note that in this construction the pure tensor $ch \otimes k$ is
identified with the pure tensor $h \otimes ck$ for $c \in {\mathbb
C}$ a scalar.
It is convenient to view a vector $h$ in the Hilbert space $\cH$ also
as an operator $h \in \cL({\mathbb C}, \cH)$:
$$
  h \colon c \mapsto c \cdot h \in \cH \text{ for } c \in {\mathbb C}.
$$
with adjoint $h^{*} \colon \cH \to {\mathbb C}$ given by
$$
   h^{*} \colon h' \mapsto \langle h, h' \rangle_{\cH} \in {\mathbb C}.
$$
With this interpretation, the Hilbert space inner product itself can
be rewritten as
$$
\langle h, h' \rangle_{\cH} = (h')^{*} h.
$$

A space closely related to the Hilbert space tensor product $\cH
\otimes \cK$ is the space $\cC_{2}(\cK, \cH)$ of Hilbert-Schmidt
operators from $\cH$ into $\cK$, i.e., the space of operators $T \in
\cL(\cK, \cH)$ such that $T^{*}T$ is in the trace class $\cC_1(\cK)=\cC_1(\cK,\cK)$.  These
operators form a Hilbert space with inner product given by
$$
   \langle S, T \rangle_{\cC_{2}(\cK, \cH)} = {\rm tr}(T^{*}S).
$$
In fact, the following  result gives a useful identification
between the tensor-product Hilbert space $\cH \otimes \cK$ and the
Hilbert space of Hilbert-Schmidt operators $\cC_{2}(\cK, \cH)$. For
completeness we include an elementary proof; a good reference for
more general tensor-product constructions is the book of Takesaki
\cite{Takesaki}.

\begin{proposition}   \label{P:tensor}
    Let $\cH$ and $\cK$ be two Hilbert spaces with a fixed
    conjugation operator $\cC \colon k \mapsto \overline{k}$ given on
    $\cK$.  Define a map $U_{\cH, \cK}$ on pure tensors in $\cH
    \otimes \cK$ into rank-1 operators from $\cK$ into $\cH$
    according to the formula
$$
U_{\cH, \cK} \colon h \otimes k \mapsto h (\overline{k})^{*} =: h
k^{\top}.
$$
Then $U_{\cH, \cK}$ extends by linearity and continuity to a unitary
map from the Hilbert space $\cH \otimes \cK$ onto the Hilbert space
$\cC_{2}(\cK, \cH)$.
\end{proposition}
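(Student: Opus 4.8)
The plan is to show first that $U_{\cH,\cK}$ is well-defined and isometric on pure tensors up to the relevant identification, then that it extends to an isometry on the closed span, and finally that its range is dense in $\cC_2(\cK,\cH)$, so that the extension is onto. First I would check that the assignment $h\otimes k\mapsto h(\overline{k})^*$ respects the identification $ch\otimes k=h\otimes ck$: since $\cC$ is anti-linear, $\overline{ck}=\bar c\,\overline{k}$, hence $(ch)(\overline{k})^*=c\,h(\overline{k})^*=h(\bar c\,\overline{k})^*=h(\overline{ck})^*$, so the map is consistent on pure tensors and extends linearly to the algebraic tensor product. The rank-one operator $h(\overline{k})^*$ sends $f\mapsto \langle \overline{k},f\rangle_\cK\, h$, and one checks it is Hilbert–Schmidt (it is finite rank), so the map indeed lands in $\cC_2(\cK,\cH)$.

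Next I would verify the isometry property on pure tensors by a direct computation of inner products. For pure tensors $h\otimes k$ and $h'\otimes k'$ one computes
\[
\langle h(\overline{k})^*,\, h'(\overline{k'})^*\rangle_{\cC_2(\cK,\cH)} = \tr\big((h'(\overline{k'})^*)^* h(\overline{k})^*\big) = \tr\big(\overline{k'}\,(h')^* h\,(\overline{k})^*\big),
\]
and using that $\tr(ab)=\tr(ba)$ for the trace of a rank-one operator this equals $(h')^*h\cdot (\overline{k})^*\overline{k'} = \langle h,h'\rangle_\cH\,\langle \overline{k'},\overline{k}\rangle_\cK = \langle h,h'\rangle_\cH\,\langle k,k'\rangle_\cK$, where the last step is property (ii) of the conjugation. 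This matches $\langle h\otimes k, h'\otimes k'\rangle_{\cH\otimes\cK}$, so $U_{\cH,\cK}$ preserves inner products on the algebraic tensor product; it therefore extends by continuity to an isometry from $\cH\otimes\cK$ into $\cC_2(\cK,\cH)$.

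Finally I would show the range is all of $\cC_2(\cK,\cH)$. Since $U_{\cH,\cK}$ is an isometry, its range is closed, so it suffices to show the range is dense. The finite-rank operators are dense in $\cC_2(\cK,\cH)$ in the Hilbert–Schmidt norm, and every finite-rank operator is a finite sum of rank-one operators of the form $f\mapsto \langle g,f\rangle h$; writing $g=\overline{\overline{g}}$ via the involution property (iii), such an operator equals $h(\overline{\overline{g}})^*=U_{\cH,\cK}(h\otimes\overline{g})$ and hence lies in the range. Therefore the range is dense and closed, so $U_{\cH,\cK}$ is onto, completing the proof. The only mild subtlety — and the step I would be most careful about — is the bookkeeping with the conjugation $\cC$: one must consistently track where anti-linearity inserts complex conjugates and invoke properties (ii) and (iii) at exactly the right places, but no genuine obstacle arises.
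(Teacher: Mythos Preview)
Your proof is correct and follows essentially the same approach as the paper: the core step in both is the direct verification that $U_{\cH,\cK}$ preserves inner products on pure tensors, and your trace computation matches the paper's line for line. You are simply more explicit than the paper about well-definedness and surjectivity; the paper compresses these into the single remark that $\cH\otimes\cK$ and $\cC_2(\cK,\cH)$ are the respective completions of the spans of pure tensors and rank-one operators, so checking the inner product on generators suffices.
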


\begin{proof}
    For purposes of the proof, we abbreviate $U_{\cH, \cK}$ to $U$.
    As $\cH \otimes \cK$ is the Hilbert space completion of the span
    of the pure tensors and $\cC_{2}(\cK, \cH)$ is the Hilbert space completion
    of the span of the rank-one operators, it suffices to check that
    $U$ preserves the respective inner products on pure tensors:
\begin{equation}  \label{check}
    \langle U [ h \otimes k], \, U [ h' \otimes k']
    \rangle_{\cC_{2}(\cK, \cH)} = \langle h \otimes k, \, h' \otimes
    k' \rangle_{\cC_{2}(\cK, \cH)}.
\end{equation}
To this end, we compute
\begin{align*}
  &  \langle U [ h \otimes k], \, U [ h' \otimes k']
    \rangle_{\cC_{2}(\cK,\cH)}  = \langle h \overline{k}^{*},
    \, h' (\overline{k'})^{*} \rangle_{\cC_{2}(\cK , \cH)}
     = {\rm tr} (\overline{k'} (h')^{*}  h \overline{k}^{*})  \\
    &  \quad   = {\rm tr}((h')^{*} h \overline{k}^{*} \overline{k'} )
     = \langle h, h' \rangle_{\cH} \cdot \langle \overline{k'},  \overline{k} \rangle_{\cK}
    = \langle h, h' \rangle_{\cH} \cdot \langle k, k' \rangle_{\cK} \\
    & \quad  =  \langle h \otimes k,\, h' \otimes k' \rangle_{\cH \otimes \cK}
\end{align*}
as required.
\end{proof}

Given  four Hilbert spaces $\cH, \cK, \cH_{0}, \cK_{0}$ and operators
$X \in \cL(\cH, \cK)$ and $Y \in \cL(\cH_{0}, \cK_{0})$, the
tensor-product operator $X \otimes Y$ is defined on pure tensors in
$\cH \otimes \cH_{0}$ according to the formula
\begin{equation}   \label{tensoropnorm}
  X \otimes Y \colon h \otimes h_{0} \mapsto Xh \otimes Yh_{0} \in \cK
  \otimes \cK_{0}.
\end{equation}
It is not hard to see that $X \otimes Y$ extends to a bounded operator
from $\cH \otimes \cH_{0}$ into $\cK \otimes \cK_{0}$ with
$\| X \otimes Y \|_{\cL(\cH \otimes \cK)}  = \| X \|_{\cL(\cH, \cK)}
\cdot \| Y \|_{\cL(\cH_{0}, \cK_{0})}$.
A convenient tool
for working with such operators is to use the identification maps
$U_{\cH, \cK}$  and $U_{\cH_{0}, \cK_{0}}$ to view $X \otimes Y$ as acting
between Hilbert-Schmidt operator spaces $\cC_{2}(\cH_{0}, \cH)$ and
$\cC_{2}(\cK_{0}, \cK)$ instead; indeed this is
one approach to seeing why $X \otimes Y$ is bounded with norm as in
\eqref{tensoropnorm}.  Here we use the notation $Y^{\top}$ for the
operator
$$
  Y^{\top} \colon k \mapsto \overline{Y^{*} \overline{k}}.
$$

\begin{proposition}  \label{P:tensor2}
Given $X \in \cL(\cH,\cK)$ and $Y \in \cL(\cH_{0}, \cK_{0})$, let $L_{X}$ be the left
multiplication
operator $L_{X} \colon T \mapsto XT$ mapping the Hilbert-Schmidt-operator space
$\cC_{2}(\cK_{0}, \cH)$ to the Hilbert-Schmidt-operator space $\cC_{2}(\cK_{0}, \cK)$,
and let $R_{Y^{\top}}$ be the right multiplication operator
$R_{Y} \colon T' \mapsto T' Y^{\top}$
mapping the Hilbert-Schmidt-operator space $\cC_{2}(\cH_{0}, \cH)$ to the
Hilbert-Schmidt-operator space $\cC_{2}(\cK_{0}, \cH)$. If $U_{\cH, \cH_{0}}
\colon \cH \otimes
    \cH_{0} \to \cC_{2}(\cH_{0}, \cH)$ and $U_{\cK, \cK_{0}} \colon
    \cK \otimes \cK_{0} \to \cC_{2}(\cK_{0}, \cK)$ are the
    identification maps as introduced
    in Proposition \ref{P:tensor}, then we have the intertwining
    relation
 \begin{equation}  \label{intertwine}
	U_{\cK, \cK_{0}} (X \otimes Y) =  L_{X} R_{Y^{\top}} U_{\cH,
	\cH_{0}}.
\end{equation}
\end{proposition}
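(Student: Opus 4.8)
The plan is to verify the operator identity \eqref{intertwine} by checking it on pure tensors $h \otimes h_0 \in \cH \otimes \cH_0$, since both sides are bounded linear operators and pure tensors span a dense subspace. The computation should then reduce to unwinding the definitions of the four maps involved: $U_{\cH,\cH_0}$ and $U_{\cK,\cK_0}$ as in Proposition \ref{P:tensor}, the tensor-product operator $X \otimes Y$ as in \eqref{tensoropnorm}, and the left/right multiplication operators $L_X$ and $R_{Y^\top}$. The only genuinely delicate point will be keeping the conjugation $\cC$ (equivalently, the $(\cdot)^\top$ and $\overline{(\cdot)}$ notation) bookkeeping straight, in particular verifying that $(Yk)^\top = k^\top Y^\top$ on rank-one operators, or the equivalent identity $\overline{Y k} = Y^{\top *}\,\overline{k}$ built into the definition $Y^\top \colon k \mapsto \overline{Y^* \overline{k}}$.

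First I would apply the right-hand side to $h \otimes h_0$: we have $U_{\cH,\cH_0}(h \otimes h_0) = h\, (\overline{h_0})^* = h\, h_0^\top \in \cC_2(\cH_0,\cH)$, a rank-one operator. Applying $R_{Y^\top}$ gives $h\, h_0^\top\, Y^\top$, and applying $L_X$ gives $X h\, h_0^\top\, Y^\top$, an operator in $\cC_2(\cK_0,\cK)$. On the other side, $(X \otimes Y)(h \otimes h_0) = Xh \otimes Yh_0$, and $U_{\cK,\cK_0}(Xh \otimes Yh_0) = (Xh)\,(\overline{Y h_0})^* = (Xh)\,(Yh_0)^\top$. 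So the identity on pure tensors amounts to showing
\begin{equation*}
  (Xh)\,(Yh_0)^\top = (Xh)\, h_0^\top\, Y^\top ,
\end{equation*}
which reduces to the purely $\cK_0$-side statement $(Yh_0)^\top = h_0^\top\, Y^\top$ as maps $\cK_0 \to \BC$.

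To close that last step I would unwind both sides: for $k \in \cK_0$, $\big((Yh_0)^\top\big)(k) = \big((\overline{Yh_0})^*\big)(k) = \langle \overline{k}, \overline{Y h_0}\rangle_{\cK_0}$ — wait, more carefully, $(\overline{Yh_0})^*$ is the functional $f \mapsto \langle \overline{Yh_0}, f\rangle$; composing with $\cC$ implicitly in the $\top$ notation, one gets a scalar expressible via property (ii) of the conjugation as $\langle h_0, Y^* \cdot\, \rangle$-type pairing. On the other side $\big(h_0^\top Y^\top\big)(k) = h_0^\top(Y^\top k) = h_0^\top\big(\overline{Y^* \overline k}\big)$, and applying $h_0^\top = (\overline{h_0})^*$ gives $\langle \overline{h_0}, \overline{Y^*\overline k}\rangle = \langle Y^* \overline k, h_0\rangle = \langle \overline k, Y h_0\rangle$, using (ii) again; the two expressions match. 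I expect the main obstacle to be nothing more than carrying the anti-linearity of $\cC$ through these pairings without sign or conjugation errors; there is no conceptual difficulty, only the need to apply properties (i)–(iii) of $\cC$ consistently. Finally, boundedness of both sides (already noted after \eqref{tensoropnorm}, together with the fact that $L_X$, $R_{Y^\top}$ are bounded on the relevant Hilbert–Schmidt spaces with norms $\|X\|$, $\|Y^\top\| = \|Y\|$) lets us extend the identity from pure tensors to all of $\cH \otimes \cH_0$ by linearity and continuity, completing the proof.
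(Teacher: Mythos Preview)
Your proposal is correct and follows essentially the same route as the paper: both check the identity on elementary tensors $h\otimes h_0$ and reduce to the rank-one identity $(\overline{Yh_0})^* = \overline{h_0}^{\,*}\,Y^\top$ (equivalently $(Yh_0)^\top = h_0^\top Y^\top$). The paper simply asserts this last step in one line, whereas you unpack it via the conjugation properties; the underlying argument is the same.
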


\begin{proof}
It suffices to verify that the relation \eqref{intertwine} holds when
applied to an elementary tensor $h \otimes h_{0}$.  We compute
\begin{align*}
    & 	U_{\cK, \cK_{0}} [(X \otimes Y) (h \otimes h_{0})] =
    U_{\cK, \cK_{0}}[Xh \otimes Y h_{0}] = (Xh) (\overline{Y
    h_{0}})^{*} = (Xh)(\overline{h_{0}}^{*}
    Y^{\top})  \\
 & \quad =  X (h \overline{h_{0}}^{*}) Y^{\top}  = L_{X} R_{Y^{\top}}
 U_{\cH, \cH_{0}}[h \otimes h_{0}]
\end{align*}
as required.
 \end{proof}

 The well-known Douglas lemma (see \cite{Douglas}) asserts that,
 given Hilbert space operators $A \in \cL(\cY, \cZ)$ and $B \in
\cL(\cX, \cZ)$, there exists an operator $X \in \cL(\cX,\cY)$ with
$AX=B$ and $\| X \| \le 1$ if and only if $BB^* - AA^* \preceq 0$.
We shall have use of the adjoint version: {\em  given Hilbert space
operators $A \in \cL(\cZ, \cY)$ and $B \in \cL(\cZ, \cX)$, then there
exists an operator $X \in \cL(\cY, \cZ)$ satisfying $XA=B$ with $\| X
\| \le 1$ if and only if $B^{*}B - A^{*}A \preceq 0$.}  The special
case where $\cZ = {\mathbb C}$ appears as Lemma 8.4 in \cite{DP} and
is crucial for the proof of the multiplicity-one special case of
Theorem~\ref{T:I} there.  The following structured version of the
Douglas lemma is crucial for the second proof of our main result,
Theorem \ref{T:I}, for the general case.

\begin{proposition}  \label{P:Douglas}
    Suppose that we are given three Hilbert spaces $\cH$, $\cK$,
    $\cH_{0}$, along with vectors $p \in \cH \otimes \cH_{0}$ and $q
    \in \cK \otimes \cH_{0}$.  Then there exists an operator $X \in
    \cL(\cH, \cK)$ satisfying
 \begin{equation}   \label{problem}
 (X \otimes I_{\cH_{0}}) p = q \text{ and } \| X \|_{\cL(\cH, \cK)}
 \le 1
 \end{equation}
 if and only if
 \begin{equation}  \label{solcrit}
 U_{\cK, \cH_{0}}[q]^{*} U_{\cK, \cH_{0}}[q] - U_{\cH,
 \cH_{0}}[p]^{*} U_{\cH, \cH_{0}}[p] \preceq 0.
 \end{equation}
\end{proposition}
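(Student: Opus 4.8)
The plan is to translate the tensor-product identity $(X\otimes I_{\cH_0})p=q$ into an ordinary operator equation between Hilbert--Schmidt operators, to which the adjoint Douglas lemma recalled just above applies directly.

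First I would introduce $P:=U_{\cH,\cH_0}[p]\in\cC_2(\cH_0,\cH)$ and $Q:=U_{\cK,\cH_0}[q]\in\cC_2(\cH_0,\cK)$; being Hilbert--Schmidt, $P$ and $Q$ are in particular bounded operators. The key reduction is an application of Proposition~\ref{P:tensor2} with the choice $Y=I_{\cH_0}$ (so that $\cK_0=\cH_0$). Here one first checks, straight from the definition $Y^{\top}\colon k\mapsto\overline{Y^*\overline k}$ together with $\cC^2=I$, that $I_{\cH_0}^{\top}=I_{\cH_0}$; consequently $R_{Y^{\top}}$ is the identity map on $\cC_2(\cH_0,\cH)$ and the intertwining relation~\eqref{intertwine} collapses to
$$
U_{\cK,\cH_0}\,(X\otimes I_{\cH_0})=L_X\,U_{\cH,\cH_0}.
$$
Since $U_{\cK,\cH_0}$ is unitary, hence injective, applying it to both sides of $(X\otimes I_{\cH_0})p=q$ shows that, for a fixed $X\in\cL(\cH,\cK)$, this identity is equivalent to $XP=Q$; and because $\|X\otimes I_{\cH_0}\|=\|X\|$, the norm constraint is unchanged. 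Thus \eqref{problem} is solvable if and only if there exists $X\in\cL(\cH,\cK)$ with $XP=Q$ and $\|X\|\le 1$.

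Finally I would invoke the adjoint form of the Douglas lemma with $A=P\in\cL(\cH_0,\cH)$ and $B=Q\in\cL(\cH_0,\cK)$: such an $X$ exists precisely when $Q^*Q-P^*P\preceq 0$ as operators on $\cH_0$. Unravelling the definitions of $P$ and $Q$, this inequality is exactly the criterion \eqref{solcrit}, which finishes the proof.

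I do not anticipate a genuine obstacle here: all the substance is packaged in Propositions~\ref{P:tensor} and~\ref{P:tensor2} and in the Douglas lemma, and what remains is bookkeeping. The two points that require care are the transpose computation $I_{\cH_0}^{\top}=I_{\cH_0}$ (and, more generally, keeping track of which of the one-sided multiplication operators in \eqref{intertwine} survives when $Y=I_{\cH_0}$), and the fact that it is the \emph{adjoint} version of Douglas that is needed, so that the resulting inequality lives on $\cH_0$ rather than on $\cH$ or $\cK$.
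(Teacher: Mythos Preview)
Your proof is correct and follows essentially the same approach as the paper's own proof: apply the identification maps and the intertwining relation of Proposition~\ref{P:tensor2} (with $Y=I_{\cH_0}$) to convert \eqref{problem} into the operator equation $XU_{\cH,\cH_0}[p]=U_{\cK,\cH_0}[q]$, then invoke the adjoint Douglas lemma. You have simply spelled out the bookkeeping (the computation $I_{\cH_0}^{\top}=I_{\cH_0}$ and the injectivity of $U_{\cK,\cH_0}$) that the paper leaves implicit.
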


\begin{proof}
    Application of the identification maps $U_{\cH, \cH_{0}}$ and
    $U_{\cK, \cH_{0}}$ combined with the intertwining relation
    \eqref{intertwine} given by Proposition \ref{P:tensor2}  transforms the
    problem  of finding an $X$ satisfying \eqref{problem} to:  {\em find $X \in \cL(\cH, \cK)$ with
    $\|X\|_{\cL(\cH, \cK)} \le 1$ so that}
    $$
 X U_{\cH, \cH_{0}}[p] = U_{\cK, \cH_{0}} [q].
 $$
 The criterion for a solution of this problem is then given by the
 standard Douglas lemma (in adjoint form) resulting in
 \eqref{solcrit} as the criterion for the existence of a solution.
 \end{proof}

\section{The graph formalism} \label{S:GraphSetup}

For the remainder of this paper let $G=(\bV, \bE)$ be a finite simple undirected bipartite
graph such that
each path-connected component of $G$ is a complete bipartite graph.
Here $\bV$ denotes the set of vertices and $\bE$ the set of edges. Since
$G$ is a bipartite graph, the vertex set $\bV$ admits a decomposition
$\bV=\bS\cup \bR$, with $\bS\cap \bR=\emptyset$, such that
each edge $e\in \bE$
has one vertex in $\bS$ (the source side), denoted by $s(e)$, and one vertex in $\bR$ (the
range side),
denoted by $r(e)$. We let $\bP$ denote the set of path-connected components of $G$.
We let $\bP$
denote
the set of path-connected components of $G$.  For a vertex
$v\in \bV$ we let $[v]$
 indicate the path-connected component $p \in \bP$ that contains $v$.
For each $p \in \bP$ we denote the vertex set and edge set of
$p$ by $\bV_p$ and $\bE_p$,
respectively. Each path-connected component $p$ of $G$ is also a
simple bipartite graph and its vertex set $\bV_p$ can be decomposed as
$\bV_p=\bS_{p}\cup \bR_p$ with $\bS_{p}=\bS\cap\bV_{p}$ and $\bR_p=
\bR\cap \bV_p$. By the
assumption that each path-connected component is a complete bipartite
graph, for each $p \in \bP$, the set $\bE_p$ consists of all possible
edges connecting a vertex in $\bS_{p}$ with a vertex in $\bR_{p}$.  By
definition of connected component, no edge $e$ of $G$ connects a vertex in
$\bS_p \cup \bR_p$ with a vertex in
$\bS_{p'}  \cup \bR_{p'}$  if $p \ne p'$.

We shall on occasion want also to specify a {\em multiplicity structure} to such a
graph $G$; by this we mean a specification of a Hilbert space
$\cH_{p}$ for each path-connected
component $p \in \bP$ of $G$.  We then say that the
whole collection $\bG = (G,  \{\cH_{p} \colon p \in \bP \})$ is
an {\em admissible graph with multiplicity}, or an {\em $M$-graph} for
short.  Finally, for the most general version of the structure, we
will specify a $C^{*}$-algebra $\bDelta_{p}$ represented concretely
as a $C^{*}$-subalgebra of $\cL(\cH_{p})$;  we call this more
elaborate structure $\overline{\bG} = (G, \{\bDelta_{p} \subset
\cL(\cH_{p})\})$ a {\em admissible graph with specified
$C^{*}$-algebras}, or {\em A-graph} for short.

\subsection{The uncertainty structure: general case}  \label{S:uncertainty}
Let $\overline{\bG} = (G, \{ \bDelta_{p} \subset \cL(\cH_{p}\})$ be
an A-graph as defined above.
We set $\cH_v=\cH_{[v]}$ for each $v\in V$ and we further introduce
the spaces
\begin{equation}\label{KSKR}
\begin{aligned}
    &\cH_\bS=\bigoplus_{s \in \bS} \cH_s,\quad \cH_{\bS,p}=\bigoplus_{s\in
\bS_p}\cH_s \quad (p \in \bP),\\
&\cH_\bR=\bigoplus_{r\in \bR}\cH_r,\quad \cH_{\bR,p}=\bigoplus_{r\in
\bR_p}\cH_r\quad (p \in \bP).
\end{aligned}
\end{equation}
For $s\in \bS$ we write $\iota_{s}$ for the canonical embedding  of
$\cH_{[s]}$ into $\cH_\bS$ that identifies $\cH_{[s]}$ with the
$s$-th component $\cH_{s} = \cH_{[s]}$
in the direct sum defining $\cH_{\bS}$ in \eqref{KSKR}:  $\iota_{s} h=
\oplus_{s'\in\bS} (\delta_{s'\!,s} h)$ for $h\in \cH_{[s]}$, with
$\delta_{s'\!,s}$ equal to the Kronecker delta.
 Similarly, for $r\in \bR$ we write $\iota_{r}$ for the
embedding of $\cH_{[r]}$ as the $r$-th component $\cH_{r} =
\cH_{[r]}$ in the direct-sum defining $\cH_\bR$ in \eqref{KSKR}.
Note that $\iota_s$ (respectively
$\iota_r$) acts on $\cH_{[s]}$
 (respectively $\cH_{[r]}$) and not on $\cH_{s}$ (respectively $\cH_{r}$), so
that for an $e\in E$ the product $\iota_{s(e)}\iota_{r(e)}^*$ is
properly defined.

We let $\bDelta^{\bE}$ denote the set of all operator-tuples
$Z = (Z_{e})_{e \in \bE}$ indexed by the
edge set $\bE$ such that the component $Z_{e}$ is in the $C^{*}$-algebra
$\bDelta_{[\br(e)]}=\bDelta_{[\bs(e)]}$.
Given any $Z = (Z_{e})_{e \in \bE} \in \bDelta^{\bE}$, we define an
operator $L_{\overline{\bG}}(Z) \in \cL(\cH_{\bR}, \cH_{\bS})$ by
\begin{equation}  \label{LG}
L_{\overline{\bG}}(Z) = \sum_{e \in \bE} \iota_{\bs(e)}
Z_{e} \iota_{\br(e)}^{*}.
\end{equation}
We then define the uncertainty set $\bDelta_{\overline{\bG}}$ associated with
the $A$-graph $\overline{\bG}$ by
\begin{equation}   \label{genstruc1}
    \bDelta_{\overline{\bG}} = \{ L_{\overline{\bG}}(Z) \colon
    Z = (Z_{e})_{e \in \bE}  \in \bDelta^{\bE}\}
     \subset \cL(\cH_{\bR}, \cH_{\bS}).
\end{equation}

Since the elements of $\bDe_\ov{\bG}$ in general are not square, we cannot work with
its commutant, like we did with $\bDe$ in \eqref{commutant}. Instead
we will make use of the intertwining space
\begin{equation}\label{InterStruc}
\bDe_\ov{\bG}'=\{(X,Y)\in \cL(\cH_\bR)\times \cL(\cH_\bS)\colon \De X= Y \De,\
\De\in\BDe_\ov{\bG}  \}.
\end{equation}
The following proposition gives an explicit description of this
intertwining space.

\begin{proposition}  \label{P:intertwining}
The set $\bDe_\ov{G}'$ is given by
\[
\bDe_\ov{G}'=\{(X,Y)\,\colon\, X=\sum_{r\in \bR}
\iota_{r}\Ga_{[r]}\iota_{r}^*,\, Y=\sum_{s\in \bS}
\iota_{s}\Ga_{[s]}\iota_{s}^* \text{ where } \Ga_p\in \bDe_{p}',\, p\in \bP\}.
\]
Here $\bDe_p'$ denotes the commutant of $\bDe_p$ in $\cL(\cH_p)$.
\end{proposition}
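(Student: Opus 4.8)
The plan is to establish the two inclusions separately. The inclusion $\supseteq$ is a direct computation. Given $\Ga_p\in\bDe_p'$ for every $p\in\bP$, put $X=\sum_{r\in\bR}\iota_r\Ga_{[r]}\iota_r^*$ and $Y=\sum_{s\in\bS}\iota_s\Ga_{[s]}\iota_s^*$; I would take an arbitrary $\De=L_{\ov{\bG}}(Z)=\sum_{e\in\bE}\iota_{\bs(e)}Z_e\iota_{\br(e)}^*$ and use the orthogonality relations $\iota_{\br(e)}^*\iota_r=\delta_{r,\br(e)}I$ and $\iota_s^*\iota_{\bs(e)}=\delta_{s,\bs(e)}I$ to collapse the double sums, obtaining $\De X=\sum_e\iota_{\bs(e)}Z_e\Ga_{[\br(e)]}\iota_{\br(e)}^*$ and $Y\De=\sum_e\iota_{\bs(e)}\Ga_{[\bs(e)]}Z_e\iota_{\br(e)}^*$. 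Since $[\bs(e)]=[\br(e)]=:p$ and $Z_e\in\bDe_p$ commutes with $\Ga_p$, the two expressions agree, so $(X,Y)\in\bDe_\ov{G}'$.

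For $\subseteq$, suppose $\De X=Y\De$ for all $\De\in\bDe_\ov{G}$. The crucial move is to test this against elements of $\bDe_\ov{G}$ supported on a single edge: for $e_0\in\bE$ and $W\in\bDe_{[\bs(e_0)]}$ the tuple $Z$ with $Z_{e_0}=W$ and all other components zero yields $\De=\iota_{\bs(e_0)}W\iota_{\br(e_0)}^*\in\bDe_\ov{G}$, and compressing $\De X=Y\De$ by $\iota_s^*(\cdot)\iota_r$ gives the master relation
\[
\delta_{s,\bs(e_0)}\,W\,\iota_{\br(e_0)}^*X\iota_r=\delta_{r,\br(e_0)}\,\iota_s^*Y\iota_{\bs(e_0)}\,W,\qquad s\in\bS,\ r\in\bR.
\]
Evaluating this with $W=I$ (I will assume, as is customary here, that $I\in\bDe_p$; otherwise one inserts a bounded approximate identity and invokes non-degeneracy of $\bDe_p$) in the four sign patterns yields: $\iota_{\br(e_0)}^*X\iota_{\br(e_0)}=\iota_{\bs(e_0)}^*Y\iota_{\bs(e_0)}$; feeding a general $W$ back into the case $s=\bs(e_0)$, $r=\br(e_0)$ shows this common operator lies in $\bDe_{[\bs(e_0)]}'$; and $\iota_{\br(e_0)}^*X\iota_r=0$ for $r\ne\br(e_0)$, while $\iota_s^*Y\iota_{\bs(e_0)}=0$ for $s\ne\bs(e_0)$. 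Since each path-connected component is complete bipartite, every vertex is an endpoint of some edge, so these last relations say precisely that $X$ and $Y$ are block diagonal relative to the decompositions \eqref{KSKR}.

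It then remains to upgrade ``block diagonal'' to ``block diagonal, with the block depending only on the component'', and this is where I expect the one genuine idea of the proof to enter. Completeness of the bipartite components is used again: for $r,r'\in\bR_p$ pick any $s\in\bS_p$, so that $(s,r)$ and $(s,r')$ are both edges of $p$; then $\iota_r^*X\iota_r=\iota_s^*Y\iota_s=\iota_{r'}^*X\iota_{r'}$, and symmetrically all diagonal blocks of $Y$ over $\bS_p$ coincide with this same operator, which by the previous step lies in $\bDe_p'$. Calling it $\Ga_p$ gives $X$ and $Y$ in the asserted form. Apart from this observation the proof is pure bookkeeping with the embeddings $\iota_s,\iota_r$; the only mild technicalities are the unitality (or non-degeneracy) of $\bDe_p$ needed to run the choice $W=I$, and components consisting of a single isolated vertex, which impose no constraint and for which the statement is read with $\Ga_p$ arbitrary in $\bDe_p'$ (such components can equally well be excluded by convention).
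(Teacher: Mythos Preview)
Your proposal is correct and follows essentially the same approach as the paper's proof: both test the intertwining relation against single-edge elements $Z$ with $Z_{e_0}=I$ (under the same unitality/approximate-identity caveat) to extract the block-diagonal structure and the equality of diagonal blocks across an edge, then feed in a general $Z_{e_0}=W\in\bDe_p$ to verify $\Ga_p\in\bDe_p'$, and finally check the easy reverse inclusion by direct computation. Your write-up is a bit more explicit about where the complete-bipartite hypothesis enters (transporting $\iota_r^*X\iota_r$ to $\iota_{r'}^*X\iota_{r'}$ via a common neighbour $s$) and flags the isolated-vertex edge case, but these are expository refinements rather than a different argument.
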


\begin{proof}
Assume the $C^*$-algebras $\bDe_p$, $p\in P$, are unital. If this is
not the case then one can modify the argument using approximate
identities. Let $(X,Y)\in\bDe_\ov{\bG}'$. Choose an $e_0\in \bE$ and take
$Z_{e_0}=I$ and $Z_{e'}=0$ for all $e'\not=e_0$. With this choice of
$Z=(Z_e)_{e\in \bE}\in \bDe^\bE$ the intertwining relation
$L_\ov{\bG}(Z)X=Y L_\ov{\bG}(Z)$ yields
\[
\iota_{\bs(e_0)}\iota_{\br(e_0)}^* X = Y \iota_{\bs(e_0)}\iota_{\br(e_0)}^*.
\]
Since $\iota_{v}^*\iota_{v}=I$  and $\iota_{v}^*\iota_{v'}=0$ for all
$v,v'\in V$ with $v\not= v'$ (and $v$ and $v'$ either both in $\bS$ or
both in $\bR$), we have
\[
\iota_{\br(e_0)}^* X\iota_{\br(e_0)} =\iota_{\bs(e_0)}^* Y \iota_{\bs(e_0)},\quad
\iota_{\br(e_0)}^* X\iota_{r} = 0\ (r\not= \br(e_0)), \quad
\iota_{s}^* Y \iota_{\bs(e_0)} = 0\ (s\not= \bs(e_0)).
\]
Set $X_r=\iota_{r}^* X\iota_{r}$ and $Y_s=\iota_{s}^* Y\iota_{s}$ for each
$r\in \bR$ and each $s\in \bS$. Since $e_0\in \bE$ was chosen arbitrarily, the above
identities
imply that
$$
    X  = \sum_{r,r' \in \bR} \iota_{r} \iota_{r}^{*} X \iota_{r'}
    \iota_{r'}^{*}  = \sum_{r \in \bR} \iota_{r} \iota_{r}^{*} X
    \iota_{r} \iota_{r}^{*} = \sum_{r \in \bR} \iota_{r} X_{r}
    \iota_{r}^{*}
$$
and similarly
$$
Y = \sum_{s,s' \in \bS} \iota_{s} \iota_{s}^{*} Y\iota_{s'}
\iota_{s'}^{*} = \sum_{s \in \bS} \iota_{s} \iota_{s}^{*} Y
\iota_{s} \iota_{s}^{*} = \sum_{s \in \bS} \iota_{s} Y_{s} \iota_{s}^{*}.
$$
Furthermore
$$
  X_{r} = X_{r'} = Y_{s} = Y_{s'} \text{ whenever } [r] = [r'] = [s]
  = [s'].
$$
We conclude that there is a well-defined operator $\Ga_{p}$ on
$\cH_{p}$ given by
$$
  \Ga_{p} = X_{r} = Y_{s} \text{ whenever } [r] = [s] = p
$$
and that $X$ and $Y$ are given by
\begin{equation}\label{XYform}
X=\sum_{r\in \bR} \iota_{r}\Ga_{[r]}\iota_{r}^*, \quad Y=\sum_{s\in \bS}
\iota_{s}\Ga_{[s]}\iota_{s}^*.
\end{equation}
 We show next that $\Ga_{p}\in
\bDe_p'$ for each $p$. Indeed, fix a $p\in \bP$
choose $\De_p\in\bDe_p$ and let $e_0\in \bE$ such that $[\bs(e)]=p$. We
take $Z=(Z_e)_{e\in E}\in\bDe^\bE$ with $Z_{e_0}=\De_p$ and
$Z_{e'}=0$ for $e'\not=e_0$. Then
$L_{\overline{\bG}}(Z)X=Y L_{\overline{\bG}}(Z)$ yields
\begin{align*}
\iota_{\bs(e_0)} \De_p \Ga_p \iota_{\br(e_0)}^*
&=\iota_{\bs(e_0)} \De_p \iota_{\br(e_0)}^*\iota_{\br(e_0)} \Ga_p \iota_{\br(e_0)}^*
=\iota_{\bs(e_0)} \De_p \iota_{\br(e_0)}^* \sum_{r\in \bR}\iota_{r} \Ga_{[r]}
\iota_{r}^*  \\
& =L_{\overline{\bG}}(Z)X=Y L_{\overline{\bG}}(Z)
=\sum_{s\in \bS}\iota_{s} \Ga_{[s]} \iota_{s}^* \iota_{\bs(e_0)} \De_p
\iota_{\br(e_0)}^* \\
& =\iota_{\bs(e_0)} \Ga_{[\bs(e_0)]} \iota_{\bs(e_0)}^* \iota_{\bs(e_0)} \De_p
\iota_{\br(e_0)}^*
=\iota_{\bs(e_0)} \Ga_{p}\De_p \iota_{\br(e_0)}^*.
\end{align*}
This proves that $\De_p \Ga_p=\Ga_{p}\De_p$. Since $\De_p$ is an
arbitrary element of $\bDe_p$ and $p\in\bP$ was also chosen arbitrarily, we obtain that
$\Ga\in \bDe_p'$ for each $p\in\bP$.

One easily verifies that the pair $(X,Y)$ with $X$ and $Y$ as in
\eqref{XYform} where $\Ga_p\in\bDe_p'$ for each $p\in \bP$ is in
$\bDe_\ov{G}'$. Hence the proof is complete.
\end{proof}

Now suppose that we are given an operator $M \in \cL(\cH_{\bS},
\cH_{\bR})$ along with the $A$-graph $\overline{\bG} = ( G, \{
\bDelta_{p} \subset \cL(\cH_{p})\}_{p \in \bP})$ as above.
We then define the {\em  $\mu_{\overline{\bG}}$-structured singular
value} of  $M$ as in \eqref{ssv}
but with  $\bDelta_{\overline{\bG}}$ as in \eqref{genstruc1} in place of $\bDelta$:
\begin{equation}\label{genmu}
\mu_{\bDelta_{\overline{\bG}}}(M) =
\frac{1}{ \inf \{ \| \Delta \| \colon \De \in \bDelta_{\overline{\bG}},\, 1 \in
\sigma(M \Delta) \} }.
\end{equation}
The analogue of the $D$-scaled version of $\mu$ is defined as
\begin{equation}  \label{genmuhat}
\widehat \mu_{\bDelta_{\overline{\bG}}}(M) = \inf\{ \| X M Y^{-1} \| \colon (X,Y) \in
\bDelta'_\bG \text{ with $X$, $Y$ invertible}\}.
\end{equation}
As in the classical case, $\widehat \mu_{\bDelta_{\overline{\bG}}}(M)$ has the
following properties:
\begin{itemize}
\item
Computation of $\widehat \mu_{\bDelta_{\overline{\bG}}}(M)$ can be reduced to a
$C^*$-algebra LOI (Linear Operator Inequality) computation: {\sl $\widehat
\mu_{\bDelta_{\overline{\bG}}}(M) < 1$ if and only if there
exists a positive definite structured solution $(X,Y) \in
\bDelta'_{\overline{\bG}}$
of the structured Stein inequality}
\begin{equation}   \label{LMItilde}
M^* X M - Y \prec 0.
\end{equation}
In cases of interest, the $C^{*}$-algebra is concretely identified as
a subspace of structured finite matrices and the structured LOI becomes a
structured LMI (Linear Matrix Inequality).

\item $\widehat \mu_{\bDelta_{\overline{\bG}}}(M)$ is always an
upper bound for $\mu_{\bDelta_{\overline{\bG}}}(M)$:\label{oneside}
\begin{equation}  \label{muGineq}
\mu_{\bDelta_{\overline{\bG}}}(M) \le \widehat \mu_{\bDelta_{\overline{\bG}}}(M).
\end{equation}
\end{itemize}

Rather than pursuing this general situation further, we now discuss
two particular special cases which will be our focus for the rest of
the paper.

\subsection{The classical uncertainty structure}  \label{S:classical}
Let us now suppose that we are given an M-graph $(G, \{ \cH_{p}\}_{p
\in \bP})$ and we take the $C^{*}$-subalgebra of $\cL(\cH_{p})$ to
be simply $\bDelta_{p} = \{ s I_{\cH_{p}} \colon s \in {\mathbb C}\}$.
Then a $Z \in \bDelta^{\bE}$ has the form $Z =
(Z_{e})_{e \in \bE}$ where $Z_{e} = \la_{e} I_{\cH_{p}}$ for
complex numbers $\la_{e}$.  Rather than write
$$
 L_{\overline{\bG}}(Z) = \sum_{e \in \bE} \iota_{\bs(e)} (\la_{e}
 I_{\cH_{[\br(e)]}} ) \iota_{\br(e)}^{*},
$$
we may write $L_{\overline{\bG}}(Z)$ directly as a
function of the tuple $(\la_{e})_{e \in \bE}$ of complex numbers:
\begin{equation}   \label{LGclassical}
L_{\overline{\bG}}(Z) = L_{\bG}(\la): = \sum_{e \in \bE} \la_{e}
L_{\bG,e}  \text{ where }
  L_{\bG,e} = \iota_{\bs(e)} \iota_{\br(e)}^{*} \text{ for } e \in
  \bE.
\end{equation}
Let us write more simply
\begin{equation}   \label{uncer-clas1}
\bDelta_{\bG} = \{ L_{\bG}(\la) \colon \la = (\la_{e})_{e \in \bE},\,
\la_{e} \in {\mathbb C}\}
\end{equation}
for the associated uncertainty structure
$\bDelta_{\overline{\bG}}$ with this special choice of
$C^{*}$-subalgebras $\bDelta_{p} = \{ s I_{\cH_{p}} \colon s \in {\mathbb
C}\}$.  Note next  that in this case $\bDelta_{p}' = \cL(\cH_{p})$.  We
therefore read off from Proposition \ref{P:intertwining} that the
intertwining space $\bDelta_{\bG}' : = \bDelta_{\overline{\bG}}'$ is
given by
\begin{equation}   \label{intertwine-classical}
\bDelta_{\bG}' : =
\{ (X, Y) \colon X = \sum_{r \in \bR} \iota_{r} \Gamma_{[r]}
\iota_{r}^{*}, \, Y = \sum_{s \in \bS} \iota_{s} \Gamma_{[s]}
\iota_{s}^{*} \text{ where } \Gamma_{p} \in \cL(\cH_{p}),\, p \in
\bP\}.
\end{equation}

To make $\bDelta_{\bG}$ more explicit, it is convenient to
introduce some auxiliary notation.  We let $\widetilde \cH_{s} =
{\mathbb C}$ for each source vertex $s \in \bS$ and similarly
$\widetilde \cH_{r} = {\mathbb C}$ for each range vertex $r \in \bR$.
For each connected component $p \in \bP$, we let
$$
\widetilde \cH_{\bS,p} = \bigoplus_{s \in \bS_{p}} \widetilde
\cH_{s}, \quad \widetilde \cH_{\bR,p} = \bigoplus_{r \in \bR_{p}}
\widetilde \cH_{r}
$$
and finally
$$
\widetilde \cH_{\bS} = \bigoplus_{p \in \bP} \widetilde \cH_{\bS,p},
\quad \widetilde \cH_{\bR} = \bigoplus_{p \in \bP} \widetilde
\cH_{\bR,p}.
$$
Note that these spaces amount to the quantities in \eqref{KSKR} in the case of the multiplicity-one assignment $ \cH_{p} = {\mathbb C}$ for each component $p$ of the graph $G$; in general we have the tensor
factorizations
\begin{equation}   \label{fact-clas}
    \cH_{\bR,p} = \widetilde \cH_{\bR,p} \otimes \cH_{p},
    \quad \cH_{\bS,p} = \widetilde \cH_{\bS,p} \otimes \cH_{p}.
\end{equation}
Then it is not difficult
to see that the uncertainty structure \eqref{uncer-clas1} can be
written more explicitly as
\begin{equation}   \label{uncer-clas2}
    \bDelta_{\bG} = \{ \bigoplus_{p \in \bP} W_{p} \otimes
    I_{\cH_{p}} \colon W_{p} \in \cL(\widetilde \cH_{\bR,p},
    \widetilde \cH_{\bS,p}) \}.
\end{equation}
Since $G$ is a finite graph, by assumption, we can number the path-connected components
$p_1,\ldots,p_K$,
with $K=\#(\bP)<\infty$. When convenient we shall use $k$ as an index rather than $p_k$
when referring to elements associated with the $k$-th connected component.
Say the $k$-th connected component $p_k$ has $n_{k}$
source vertices and $m_{k}$ range vertices. We then number the source vertices $s_{k,i}$ and
range vertices $r_{k,j}$
for $i=1\ldots,n_k$ and $j=1\ldots,m_k$ and write $e_{k,ij}$ for the edge connecting source
vertex $s_{k,i}$ to range
vertex $r_{k,j}$. Thus we have the following labelings:
 \begin{align*}
     & \bS = \cup_{k=1}^{K} \bS_{k} \text{ where }
\bS_{k} = \{s_{k,i} \colon 1 \le i \le n_{k}\}, \\
& \bR = \cup_{k=1}^{K} \bR_{k} \text{ where }
\bR_{k} = \{r_{k,j} \colon 1 \le j \le m_{k} \}, \\
& \bE = \cup_{k=1}^{K} \bE_{k} \text{ where }
\bE_{k} = \{ e_{k,ij} \colon 1 \le i \le n_{k},\, 1 \le j \le m_{k}\}.
\end{align*}
Then the uncertainty structure \eqref{uncer-clas1} now assumes the
form
$$
\bDelta_{\bG} = \{ \sum_{k,i,j} \la_{k,i,j} \iota_{s_{k,i}}
\iota_{r_{k,j}}^{*} \colon \la_{k,i,j} \in {\mathbb C} \text{
arbitrary}\}
$$
with the more explicit formulation \eqref{uncer-clas2} becoming
\begin{equation}   \label{uncer-clas3}
\bDelta_{\bG} = \{ {\rm diag}_{k=1, \dots, K} W_{k} \otimes
I_{\cH_{k}} \colon W_{k} \in {\mathbb C}^{n_{k} \times m_{k}} \}.
\end{equation}
In case all $\cH_k$ are finite dimensional, tensoring with $I_{\cH_{{k}}}$ just says that
each $\Delta_{k}$
is allowed to have multiplicity equal to $\dim \cH_{{k}}$.  We note that the structure
\eqref{structure} discussed
in Section \ref{S:intro} is the special case where $n_{k} = m_{k}$
for all $k$ and $\dim \cH_{{k}} = 1$ whenever $n_{k} = m_{k} > 1$.

\subsection{The enhanced classical uncertainty structure}
\label{S:enhanced}
We now describe a second special form for an A-graph.  Suppose that
we are given an M-graph $(G, \{ \cH_{p} \colon p \in \bP\}$ where
$\cH_{p}$ has the tensor-product form $\cH_{p} = \cK \otimes \cH^{\circ}_{p}$
for a fixed Hilbert space $\cK$ and coefficient Hilbert spaces
$\cH^{\circ}_{p}$.  It will be convenient to have a notation also for
the M-graph with coefficient Hilbert spaces $\cH^{\circ}_{p}$:
$$
   \bG^{\circ} = ( G, \{ \cH_{p}^{\circ} \colon p \in \bP\}).
$$
We now specify the $C^{*}$-subalgebra $\bDelta_{p} \subset
\cL(\cH_{p})$ to be
$$
  \bDelta_{p} = \cL(\cK) \otimes I_{\cH_{p}^{\circ}},
$$
and denote the associated A-graph by $\ov{\bG}$.
If $Z' = (Z'_{e})_{e \in \bE}$ is an element of
$\bDelta^{\bE}$, then each $Z'_{e}$ has the form
$$
 Z'_{e} = Z_{e} \otimes I_{\cH^{\circ}_{p}}
$$
where $Z_{e}$ is an arbitrary operator on $\cK$.  Then the operator
$$
L_{\overline{\bG}}(Z') = \sum_{e \in \bE} \iota_{\bs(e)} (Z_{e}
\otimes I_{\cH^{\circ}_{p}}) \iota_{\br(e)}^{*}
$$
is really a function $L_{\bG}(Z)$ of the $\bE$-tuple $Z = (Z_{e})_{e \in \bE}$ of
operators on $\cK$.  If we let $L_{\bG^{\circ}}(z)$ be as in Subsection
\ref{S:classical} associated with the M-graph $\bG^{\circ}$, with the $\circ$-super index
carried over in the notation,
$$
L_{\bG^{\circ}}(\la) = \sum_{e \in \bE} \la_{e} L_{\bG^{\circ},e}\quad \text{ where }\quad
L_{\bG^{\circ},e} = \iota^{\circ}_{\bs(e)} (\iota^{\circ }_{\br(e)})^{*},
$$
then, by using the identities
$$
\iota_{\bs(e)} = I_{\cK} \otimes
\iota^{\circ}_{\bs(e)}, \quad \iota_{\br(e)} = I_{\cK} \otimes
\iota^{\circ}_{\br(e)},
$$
it is easily verified that
\begin{equation}  \label{LGenhanced}
 L_{\overline{\bG}}(Z') = L_{\bG^{\circ}}(Z) : =
 \sum_{e \in \bE} Z_{e} \otimes L_{\bG^{\circ},e}.
\end{equation}
More explicitly, in the notation used at the end of Subsection
\ref{S:classical}, we see that we have the enhanced versions of the
factorizations \eqref{fact-clas}
\begin{equation}   \label{fact-en}
    \cH_{\bR,p} = \cK \otimes \cH^{\circ}_{\bR,p} = \cK \otimes \widetilde \cH_{\bR,p}
    \otimes
    \cH^{\circ}_{p}, \quad
 \cH_{\bS,p} = \cK \otimes \cH^{\circ}_{\bS,p} = \cK \otimes \widetilde \cH_{\bS,p} \otimes
 \cH^{\circ}_{p}
 \end{equation}
 and the associated uncertainty structure
$\bDelta_{\overline{\bG}}$ can be presented as follows:
\begin{equation}  \label{uncer-en2}
   \bDelta_{\overline{\bG}} = \left\{ \bigoplus_{p \in \bP} W_{p} \otimes
   I_{\cH^{\circ}_{p}} \colon
   W_{p} \in \cL(\cK \otimes \widetilde \cH_{\bR,p}, \cK \otimes
   \widetilde \cH_{\bS_{p}}) \right\}
 \end{equation}
 or in matrix form,
 \begin{equation}   \label{uncer-en3}
     \bDelta_{\ov{\bG}} = \{ W = {\rm diag}_{k=1, \dots, K} [ W_{k}
     \otimes I_{\cH^{\circ}_{p_{k}}} ] \colon W_{k} \in
     \cL(\cK)^{n_{k} \times m_{k}}\}.
\end{equation}

We shall be interested in computing $\mu_{\bDelta_{\overline{\bG}}}(M)$
for the case where $M$ has the tensored form $M = I_{\cK} \otimes M^{\circ}$
for an operator $M^{\circ} \in \cL(\cH_{\bS}^{\circ},
\cH_{\bR}^{\circ})$.  It is then natural to use the shorthand notation
$$
  \widetilde \mu_{\bG^\circ}(M^\circ) : = \mu_{\overline{\bG}}(I_{\cK} \otimes
  M^{\circ}).
$$

For $\bDelta_{p} = \cL(\cK) \otimes I_{\cH^{\circ}_{p}}$, we have
$$
  \bDelta_{p}' = I_{\cK} \otimes \cL(\cH^{\circ}_{p}).
$$
and hence we read off from Proposition \ref{P:intertwining} that
\begin{align}
    \bDelta_{\ov{\bG}}' & = \{ (X,Y) \colon X = \sum_{r \in \bR}
    I_{\cK} \otimes \iota_{r} \Gamma_{[r]}^{\circ} \iota_{r}^{*}, \,
    Y = \sum_{s \in \bS} I_{\cK} \otimes \iota_{s} \Gamma_{[s]}^{\circ}
    \iota_{s}^{*} \text{ where } \Gamma^{\circ}_{p} \in
    \cL(\cH^{\circ}_{p})\} \notag  \\
    & = I_{\cK} \otimes \bDelta_{\bG^{\circ}}'.
 \label{tildeDelta'}
 \end{align}

 \subsection{Main Result}   \label{S:main}
We can now state our Main Result as follows.

\smallskip

\begin{theorem}[Main Result]  \label{T:I}  Let $\overline{\bG}$
     and $\bG^{\circ}$ be as in Subsection \ref{S:enhanced} with
    $\cK$ taken to be an infinite-dimensional separable Hilbert and all $\cH_p^\circ$
    finite dimensional, where $p\in\bP$.
    Then, for any linear operator
\[
M^\circ:\bigoplus_{p \in \bP} \cH^{\circ}_{\bS,p} = \bigoplus_{p \in
    \bP} (\widetilde \cH_{\bS,p} \otimes
    \cH^{\circ}_{p})\to
\bigoplus_{p \in \bP} \cH^{\circ}_{\bR,p}
    = \bigoplus_{p \in \bP} (\widetilde
    \cH_{\bR,p} \otimes \cH^{\circ}_{p})
\]
we have
    $$
 \widetilde \mu_{\bDelta_{\bG^{\circ}}}(M^{\circ}): =
 \mu_{\bDelta_{\overline{\bG}}}(I_{\cK} \otimes M^{\circ}) = \widehat
 \mu_{\bDelta_{\bG^{\circ}}}(M^{\circ}).
    $$
In particular
$$
\widetilde \mu_{\bDelta_{\bG^{\circ}}}(M^{\circ})  < 1
\quad\Longleftrightarrow\quad \widehat \mu_{\bDelta_{\bG^{\circ}}}(M^\circ) < 1
$$
and testing whether $\widetilde \mu_{\bDelta_{\bG^{\circ}}}(M^{\circ}) < 1$ reduces to a
finite-dimensional LMI.
\end{theorem}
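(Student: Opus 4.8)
The plan is as follows. The bound $\widetilde\mu_{\bDelta_{\bG^\circ}}(M^\circ)\le\widehat\mu_{\bDelta_{\bG^\circ}}(M^\circ)$ is the analogue of \eqref{muGineq}, so only the opposite inequality needs proof; since both quantities are positively homogeneous of degree one in the argument, it suffices to show that $\widehat\mu_{\bDelta_{\bG^\circ}}(M^\circ)\ge 1$ forces $\widetilde\mu_{\bDelta_{\bG^\circ}}(M^\circ)\ge 1$, i.e.\ that some $\De\in\bDelta_{\ov{\bG}}$ with $\|\De\|\le 1$ has $1\in\sigma\big((I_{\cK}\otimes M^\circ)\De\big)$. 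By \eqref{tildeDelta'} and the description of $\bDelta_{\bG^\circ}'$ in Proposition \ref{P:intertwining} (cf.\ \eqref{intertwine-classical}), the Stein inequality \eqref{LMItilde} for $M=I_{\cK}\otimes M^\circ$ reduces to the finite-dimensional LMI: there are positive definite $\Ga_p\in\cL(\cH^\circ_p)$, $p\in\bP$, with
\[
(M^\circ)^*\Big(\bigoplus_{p\in\bP} I_{\widetilde\cH_{\bR,p}}\otimes\Ga_p\Big)M^\circ\ \prec\ \bigoplus_{p\in\bP} I_{\widetilde\cH_{\bS,p}}\otimes\Ga_p .
\]
Thus the task is: \emph{if this LMI is infeasible, construct the required $\De$.}

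First I would run the convexity/separation argument of Dullerud--Paganini \cite{DP}. Infeasibility says the open convex cone $\{(\Ga_p)_p:\Ga_p\succ 0\}\times\{Q:Q\succ 0\}$ in the space of Hermitian tuples misses the graph of the linear map $(\Ga_p)_p\mapsto\bigoplus_p I_{\widetilde\cH_{\bS,p}}\otimes\Ga_p-(M^\circ)^*(\bigoplus_p I_{\widetilde\cH_{\bR,p}}\otimes\Ga_p)M^\circ$; a separating functional, whose components are read off from a Hermitian $B$ via the partial traces ${\rm tr}_{\widetilde\cH_{\bS,p}}(B_{pp})$ and ${\rm tr}_{\widetilde\cH_{\bR,p}}\big((M^\circ B(M^\circ)^*)_{pp}\big)$ relative to the factorizations \eqref{fact-clas}, produces $B\succeq 0$ on $\cH^\circ_{\bS}=\bigoplus_p\cH^\circ_{\bS,p}$, $B\ne 0$, with
\[
{\rm tr}_{\widetilde\cH_{\bR,p}}\big((M^\circ B(M^\circ)^*)_{pp}\big)\ \succeq\ {\rm tr}_{\widetilde\cH_{\bS,p}}(B_{pp})\qquad(p\in\bP),
\]
where $X_{pp}$ denotes the $p$-block compression. (For multiplicity one the scalar case $\cZ=\BC$ of the Douglas lemma from \cite{DP} then suffices; general multiplicity is what forces the use of the structured Douglas lemma below.)

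Next I would turn this certificate into $\De$, using that $\cK$ is infinite-dimensional and separable. Write $\overline B=C^*C$ with $C$ mapping $\cH^\circ_{\bS}$ into a finite-dimensional space and compose with an isometric embedding into $\cK$, so that by Proposition \ref{P:tensor} $C$ corresponds to a nonzero $u\in\cK\otimes\cH^\circ_{\bS}$ with $U_{\cK,\cH^\circ_{\bS}}[u]^*U_{\cK,\cH^\circ_{\bS}}[u]=\overline B$. Set $v:=(I_{\cK}\otimes M^\circ)u$ and let $u_p,v_p$ be the $p$-components of $u,v$ in $\cK\otimes\cH^\circ_{\bS,p}=(\cK\otimes\widetilde\cH_{\bS,p})\otimes\cH^\circ_p$ and $\cK\otimes\cH^\circ_{\bR,p}=(\cK\otimes\widetilde\cH_{\bR,p})\otimes\cH^\circ_p$. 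By Proposition \ref{P:tensor2} the Gram operator of $v$ is $\overline{M^\circ B(M^\circ)^*}$; partially tracing over the $\cK\otimes\widetilde\cH_{\bullet,p}$ factors (cf.\ \eqref{fact-en}) shows that, relative to $\cH^\circ_p$, the Gram operators of $u_p$ and $v_p$ are the conjugates of ${\rm tr}_{\widetilde\cH_{\bS,p}}(B_{pp})$ and ${\rm tr}_{\widetilde\cH_{\bR,p}}\big((M^\circ B(M^\circ)^*)_{pp}\big)$, so the certificate inequalities become exactly
\[
U_{\cK\otimes\widetilde\cH_{\bS,p},\,\cH^\circ_p}[u_p]^*\,U_{\cK\otimes\widetilde\cH_{\bS,p},\,\cH^\circ_p}[u_p]\ \preceq\ U_{\cK\otimes\widetilde\cH_{\bR,p},\,\cH^\circ_p}[v_p]^*\,U_{\cK\otimes\widetilde\cH_{\bR,p},\,\cH^\circ_p}[v_p] ,
\]
i.e.\ the solvability criterion \eqref{solcrit} of Proposition \ref{P:Douglas}. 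That proposition yields, for each $p$, $W_p\in\cL(\cK\otimes\widetilde\cH_{\bR,p},\cK\otimes\widetilde\cH_{\bS,p})$ with $\|W_p\|\le 1$ and $(W_p\otimes I_{\cH^\circ_p})v_p=u_p$. Then $\De:=\bigoplus_{p\in\bP}W_p\otimes I_{\cH^\circ_p}$ lies in $\bDelta_{\ov{\bG}}$ by \eqref{uncer-en2}, has $\|\De\|\le 1$ and $\De v=u$, hence $(I_{\cK}\otimes M^\circ)\De v=v$ with $v\ne 0$ (as $\De v=u\ne 0$), so $1\in\sigma\big((I_{\cK}\otimes M^\circ)\De\big)$, as needed. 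The final LMI assertion is then immediate from $\widetilde\mu_{\bDelta_{\bG^\circ}}=\widehat\mu_{\bDelta_{\bG^\circ}}$ and the LOI characterization of $\widehat\mu$ recorded just before \eqref{muGineq}, since each $\cL(\cH^\circ_p)$ is finite-dimensional.

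The main obstacle I anticipate is the first step: extracting the dual certificate in precisely this partial-trace form — running the separation in the correct finite-dimensional space, computing the adjoints, and keeping the fixed conjugation $\cC$ on $\cK$ organised so that all of the identifications $U_{\bullet,\bullet}$, the Gram operators, and the partial traces agree with the complex conjugates built into Propositions \ref{P:tensor}--\ref{P:tensor2}. Once the certificate is in hand, the construction of $\De$ via Proposition \ref{P:Douglas} is routine. As an alternative that avoids the duality altogether, one can instead show that $\widetilde\mu_{\bDelta_{\bG^\circ}}(M^\circ)<1$ places the resolvent $Z\mapsto\big(I-L_{\bG^\circ}(Z)M^\circ\big)^{-1}$ in the noncommutative Schur--Agler class attached to $\bG^\circ$ — Lemma \ref{L:DP} being the ingredient that supplies the uniform bound removing the superfluous hypothesis of \cite{BGM3} — and then invoke the realization theorem of \cite{BGM2} to read off a positive definite structured solution of \eqref{LMItilde} directly.
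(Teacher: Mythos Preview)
Your approach is correct and, for Theorem \ref{T:I} specifically, streamlines the paper's Section \ref{S:DPproof} proof considerably. The paper gives two proofs: the first (Section \ref{S:BRLproof}) goes via Lemma \ref{L:DP} and the strict Bounded Real Lemma, exactly as you sketch in your final paragraph; the second (Section \ref{S:DPproof}) proves the more general Theorem \ref{T:I'} for an arbitrary shift-invariant $M$ and therefore must work with the set $\nabla$ of \eqref{nabla} parameterized by unit vectors in the \emph{infinite}-dimensional space $\ell^{2}\otimes\cH^{\circ}_{\bS}$, and then spends Step 1 (closure) and Step 2 (the Megretski--Treil convexity argument) to reach $\overline{\rm co}\,\nabla\cap\Pi=\emptyset$.

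Your key observation --- that for $M=I_{\cK}\otimes M^{\circ}$ the tuple $(\phi_{p}(h))_{p}$ depends on $h$ only through the finite-dimensional ``density matrix'' $B$ on $\cH^{\circ}_{\bS}$ determined by $U_{\cK,\cH^{\circ}_{\bS}}[h]^{*}U_{\cK,\cH^{\circ}_{\bS}}[h]$, and that every such $B\succeq 0$ with ${\rm tr}\,B=1$ arises from some unit $h$ because $\cK$ is infinite-dimensional --- makes $\nabla$ the image of the compact convex set $\{B\succeq 0:{\rm tr}\,B=1\}$ under a linear map. Thus $\nabla$ is already compact and convex, and the paper's Steps 1 and 2 become unnecessary: Hahn--Banach separation applies directly, and its failure yields your certificate $B$. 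This is precisely the mechanism the paper uses in Section \ref{S:BFKT} for the Bercovici--Foias--Tannenbaum variant (see Lemma \ref{L:3'}, where $\Upsilon=\alpha hh^{*}+(1-\alpha)\widetilde h\widetilde h^{*}$ plays the role of your convex combination of $B$'s); you have in effect transplanted that argument to the infinite-$\cK$ setting of Theorem \ref{T:I}. What the paper's shift-invariance approach buys is the generalization to Theorem \ref{T:I'}; what your approach buys is a shorter, purely finite-dimensional duality argument tailored to $M=I_{\cK}\otimes M^{\circ}$. Your anticipated obstacle --- extracting the dual certificate in partial-trace form --- is real but routine once one sets up the separation between the compact convex image of the density matrices and the cone $\Pi$, exactly as in the paper's Lemma \ref{L:2}; the subsequent passage to $\Gamma^{\circ}_{p}\succ 0$ (and hence to the certificate inequalities you state) follows the perturbation argument given there.
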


As explained in the Introduction, in the succeeding sections we
discuss two distinct approaches to this result: one based on the
earlier work of Ball-Groenewald-Malakorn \cite{BGM3}, the other on the work of
Dullerud-Paganini \cite{Paganini, DP}.

We conclude this section with a remark that reduces the claims of Theorem \ref{T:I} to a single implication.

\begin{remark}\label{R:reduct}  {\em
We first observe that the inequality $\wtil{\mu}_{\bDe_{\bG^\circ}}(M^\circ)\leq \widehat{\mu}_{\bDe_{\bG^\circ}}(M^\circ)$ holds. This follows from two observations. Firstly, we have the inequality $\wtil{\mu}_{\bDe_{\bG^\circ}}(M^\circ) =\mu_{\bDelta_{\overline{\bG}}}(I_{\cK} \otimes M^{\circ}) \leq \widehat{\mu}_{\bDe_{\overline{\bG}}}(I_\cK\otimes M^\circ)$, as observed on the level of Subsection \ref{S:uncertainty} on Page \pageref{oneside}.
Secondly, since
$\bDe_{\overline{\bG}}'=I_{\cK}\otimes \bDe_{\bG^\circ}'$,
by \eqref{tildeDelta'}, we have
\[
\|X(I_\cK\otimes M^\circ)Y^{-1}\|=\|X^\circ M^\circ (Y^\circ)^{-1}\|
\]
for any $(X,Y)=(I_\cK\otimes X^\circ, I_\cK\otimes Y^\circ)\in \bDe_{\overline{\bG}}'$ with $(X^\circ,Y^\circ)\in \bDe_{\bG^\circ}'$. Consequently, we obtain $\widehat{\mu}_{\bDe_{\overline{\bG}}}(I_\cK\otimes M^\circ)= \widehat{\mu}_{\bDe_{\bG^\circ}}(M^\circ)$, which yields the claimed inequality. Hence it remains to prove $\widehat{\mu}_{\bDe_{\bG^\circ}}(M^\circ) \leq \wtil{\mu}_{\bDe_{\bG^\circ}}(M^\circ)$.
By a scaling argument, this in turn reduces to
showing:
\begin{equation}   \label{reduction2}
    \widetilde \mu_{\bG^\circ}(M^\circ) < 1\quad \Longrightarrow \quad \widehat
    \mu_{\bG^\circ}(M^\circ) < 1.
\end{equation}
}\end{remark}

\section{Noncommutative Bounded Real Lemma, State-Space Similarity Theorem, and
structured singular value versus diagonal scaling}  \label{S:BRLproof}

Throughout this section, let $\bG$ be a M-graph:
$$
  \bG = (G, \{\cH_{p} \colon p \in \bP\}).
$$
Here we give a proof of our Main Result (Theorem \ref{T:I}) based on two theorems
from \cite{BGM2,BGM3} regarding the
Schur-Agler class and colligation matrices associated with the M-graph $\bG$.

For this purpose we let $z=(z_{e})_{e \in \bE}$ be a collection of freely
noncommuting indeterminates indexed by the edge set $\bE$.
We let
$L_{\bG}(z)$ be the formal linear pencil
\begin{equation}   \label{formalpencil}
    L_{\bG}(z): = \sum_{e \in \bE} z_{e}
L_{\bG,e}
\end{equation}
where the coefficients $L_{\bG,e}$ are as in \eqref{LGclassical}.
For $Z = (Z_{e})_{e \in \bE}$ a tuple of operators on some auxiliary
Hilbert space $\cK$, we evaluate the formal pencil $L_{\bG}(z)$ at
the argument $Z$ by using tensor products just as in
\eqref{LGenhanced}:
\begin{equation}   \label{LGenhanced'}
    L_{\bG}(Z) = \sum_{e \in \bE} Z_{e} \otimes L_{\bG,e}.
 \end{equation}
 This framework includes as a special case the situation where $\cK =
{\mathbb C}$ and each $Z_{e}$ is an operator on the one-dimensional
space ${\mathbb C}$; for this case we write $\lambda = (\lambda_{e})_{e\in \bE}
$ with $\la_e\in\BC$ instead of $Z = (Z_{e})_{e\in\bE}$ and we arrive at the classical
operator pencil in the $\bE$-tuple of
complex numbers $\lambda = (\lambda_{e})_{e\in \bE}$ as in
\eqref{LGclassical}:
$$
  L_{\bG}(\lambda) = \sum_{e \in \bE} \la_{e}\, L_{\bG,e}.
$$

Before turning to the results from \cite{BGM2,BGM3}
and the proof of Theorem \ref{T:I}, we recall some facts about formal power
series.

\subsection{Formal power series.}\label{S:FPS}

We let $\cF_{\bE}$ be the free monoid on the generating set
$\bE$, i.e., the free semigroup with the empty word $\emptyset$ serving as the identity
element.  Thus a generic element $\alpha$ of $\cF_{\bE}$
has the form $\alpha = e_{i_{N}} \cdots e_{i_{1}}$ where $e_{i_{j}}
\in \bE$ for each $j =1, \dots, N$.  When $\alpha \in \cF_{\bE}$ has this form, we
say that the {\em length} $|\alpha|$ of
$\alpha$ is
$N$; we include the empty word $\emptyset$ as an element of
$\cF_{\bE}$, considered to have length zero.  Multiplication
of two elements $\alpha  = e_{i_{N}} \cdots e_{i_{1}}$ and $\beta =
e_{j_{M}} \cdots e_{j_{1}}$ of $\cF_{\bE}$ is by
concatenation:
$$
\alpha \cdot \beta = e_{i_{i_{N}}} \cdots e_{i_{1}} e_{\beta_{j_{M}}}
\cdots e_{\beta_{j_{1}}}
$$
with the empty word $\emptyset$ serving as the identity element of
$\cF_{\bE}$. Furthermore, the transpose $\al^\top$ of
$\alpha  = e_{i_{N}} \cdots e_{i_{1}}$ is defined as $\al^{\top} = e_{j_{1}}
\cdots e_{j_{M}}$.
Given the $\bE$-tuple $z = (z_{e})_{e\in\bE}$ of freely noncommuting indeterminates and
an element $\alpha = e_{i_{N}} \cdots e_{i_{1}}$ we define the noncommutative monomial $z^{\alpha}$ by
$$
z^{\alpha} = z_{e_{i_{N}}} \cdots z_{e_{i_{1}}}
$$
with an individual indeterminate $z_{e}$ identified with $z^{\alpha}$
if $\alpha  = e$ is a word of length one and with $z^{\emptyset}$
identified with $1$.

For $\cX$ a linear space, we let $\cX\langle
\langle z \rangle \rangle$ denote the set of all formal power series
$ \sum_{\alpha \in \cF_{\bE}} x_{\alpha} \, z^{\alpha}$
with coefficients $x_{\alpha}$ coming from $\cX$. Two formal power series
$ \sum_{\alpha \in \cF_{\bE}} x_{\alpha} \, z^{\alpha}$
and $ \sum_{\alpha \in \cF_{\bE}} y_{\alpha} \, z^{\alpha}$ are said to be
equal if $x_\al=y_\al$ for all $\al\in \bE$. If $\cX'$
and $\cX''$  are also linear spaces for which a multiplication $\cX'
\times \cX \to \cX''$ is defined and if we are given two formal series
$x(z) = \sum_{\alpha\in\cF_\bE} x_{\alpha} z^{\alpha} \in \cX\langle
\langle z \rangle \rangle$ and $x'(z) = \sum_{\beta\in\cF_\bE}
x'_{\beta} z^{\beta} \in \cX'\langle
\langle z \rangle \rangle$, then the product formal series $x'(z) \cdot
x(z) \in \cX''\langle
\langle z \rangle \rangle$ is always well defined and given by
$$
  (x' \cdot x)(z) = \sum_{ \gamma \in \cF_{\bE}}
  \left( \sum_{ \beta, \alpha \in \cF_{\bE} \colon  \beta
  \cdot \alpha = \gamma} x'_{\beta} x_{\alpha} \right) z^{\gamma}.
$$

Assume $\cX$ is endowed with some appropriate topology (typically $\cX$ will be a Hilbert
space or the space of bounded linear operators between two Hilbert
spaces). As is now common in the theory of noncommutative functions (see
e.g.~\cite{HKMcC2011, K-VVbook}), we will often view a formal power series
$x(z) = \sum_{\alpha\in\cF_\bE} x_{\alpha} z^{\alpha} \in \cX\langle
\langle z \rangle \rangle$ as a function whose variables are operators on some auxiliary
separable Hilbert space $\cK$.
In this way, for an $\bE$-tuple $Z = (Z_{e})_{e\in\bE}$ of
linear operators acting on
$\cK$ and a formal power series $x(z) = \sum_{\alpha
\in \cF_{\bE}} x_{\alpha}\, z^{\alpha}$  we define an
element $x(Z) \in \cL(\cK)\otimes \cX $ by
\begin{equation}\label{OperatorEval}
x(Z) = \sum_{\alpha \in \cF_{\bE}} Z^{\alpha} \otimes x_{\alpha}
  \in \cL(\cK) \otimes \cX
\end{equation}
whenever the series converges in the appropriate topology of $\cL(\cK)\otimes \cX$. Here we
use the notation
$$
 Z^{\alpha} = Z_{e_{i_{N}}} \cdots Z_{e_{i_{1}}} \in  \cL(\cK)\quad\mbox{for
 $\alpha  = e_{i_{N}} \cdots e_{i_{1}}\in\cF_\bE$}.
$$
Notice that the point evaluation in \eqref{OperatorEval} generalizes
the one already introduced for the linear case in \eqref{LGenhanced}.

\subsection{The Schur-Agler class and colligation matrices associated
with $\bG$.} \label{S:SchurAgler}

Let $\cU$ and $\cY$ be two auxiliary Hilbert spaces. Given a formal
power series $S(z) = \sum_{\alpha \in \cF_{\bE}}
S_{\alpha} z^{\alpha} \in \cL(\cU, \cY)\langle \langle z \rangle
\rangle$, we say that $S$ is in the Schur-Agler class
$\mathcal{SA}_{\bG}(\cU,\cY)$ associated with the M-graph $\bG$ if
for any $\bE$-tuple $Z = (Z_{e})_{ e \in \bE}$ of operators $Z_{e} \in \cL(\cK)$
such that $\| L_{\bG}(Z) \| <1$, the evaluation $S(Z)$ via \eqref{OperatorEval} is in
$\cL(\cK \otimes \cU , \cK \otimes \cY )$ and satisfies
$ \| S(Z) \| \le 1$.
We note that the test-class of $\bE$-tuples $Z = (Z_{e})_{e\in\bE}$ is independent of
the choice of multiplicity
structure for $\bG$, as changing the multiplicity structure of $\bG$
does not effect the norm $\| L_{\bG}(Z) \|$. For purposes of
defining the Schur-Agler class, we may as well assume that the
underlying graph $G$ is taken with multiplicity-1 structure ($\cH_{p}
= {\mathbb C}$ for each $p$), and we
write $\mathcal{SA}_{G}(\cU, \cY)$ rather than
$\mathcal{SA}_{\bG}(\cU, \cY)$.

The following result was obtained in \cite{BGM2}

\begin{theorem}\label{T:BGM2} (See \cite[Theorem 5.3]{BGM2}.)
Give
    a formal power series $S \in \cL(\cU, \cY)\langle \langle z
    \rangle \rangle$, $S(z) = \sum_{\alpha \in \cF_{\bE}}
S_{\alpha} z^{\alpha}$,
the following conditions are equivalent:
    \begin{enumerate}
	\item $S$ is in the Schur-Agler class
	$\mathcal{SA}_{G}(\cU, \cY)$.
	
	\item
There is a multiplicity assignment $\{\cH_{p} \colon p\in \bP\}$
 giving rise to
	 an M-graph  $\bG = (G, \{\cH_{p}  \colon p \in \bP\})$ and a formal power
	 series $H \in \cL(\cH_{\bS}, \cY)\langle \langle z
    \rangle \rangle$ so that
$S$ has the {\em Agler decomposition}
\begin{equation}   \label{Aglerdecom}
  I - S(z) S(w)^{*} = H(z)  (I - L_{\bG}(z) L_{\bG}(w)^{*})  H(w)^{*}.
\end{equation}
Here $\overline{w} = (\overline{w}_e)_{e\in\bE}$ is another $\bE$-tuple of
freely noncommuting indeterminates,
we set $H(w)^{*} =\sum_{\beta \in \cF_{\bE}} (H_{\beta})^{*}
\overline{w}^{\beta^{\top}}$ if $H(z) = \sum_{\alpha \in {\cF}_{\bE}} H_{\alpha}
z^{\alpha}$ and define $S(w)^{*}$ accordingly, and \eqref{Aglerdecom} is to be interpreted
as an formal power series in the
$\bE \dot\cup\bE$-tuple $(z_e)_{e\in\bE}\cup (\overline{w}_e)_{e\in\bE}$.

\item $S$ has a {\em dissipative noncommutative structured
realization}, i.e., there exists a multiplicity assignment $\{\cH_{p}
\colon p \in \bP\}$
with associated $M$-graph
$$
\bG = (G, \{ \cH_{p} \colon p \in \bP \})
$$
together with a contractive colligation matrix
\begin{equation}   \label{col}
\bU =  \begin{bmatrix} A & B \\ C & D \end{bmatrix}  \colon
\begin{bmatrix} \cH_{\bS}  \\ \cU \end{bmatrix}
    \to \begin{bmatrix} \cH_{\bR}  \\ \cY
\end{bmatrix}
\end{equation}
so that
\begin{equation}   \label{real}
  S(z) = D + C (I - L_{\bG}(z) A)^{-1} L_{\bG}(z) B.
\end{equation}
\end{enumerate}
\end{theorem}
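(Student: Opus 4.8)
The plan is to prove the cycle $(1)\Rightarrow(2)\Rightarrow(3)\Rightarrow(1)$; the step $(3)\Rightarrow(1)$ is essentially routine, $(2)\Rightarrow(3)$ is a lurking-isometry argument, and $(1)\Rightarrow(2)$ is the analytic core and the step I expect to be hardest. For $(3)\Rightarrow(1)$: given a realization \eqref{real} with contractive colligation $\bU=\sbm{A&B\\C&D}$ and an $\bE$-tuple $Z$ of operators on a separable Hilbert space $\cK$ with $\|L_\bG(Z)\|<1$, the Neumann series for $(I-L_\bG(Z)(I_\cK\otimes A))^{-1}$ converges in operator norm (since $\|A\|\le1$), so the evaluation $S(Z)$ defined by \eqref{OperatorEval} equals $(I_\cK\otimes D)+(I_\cK\otimes C)(I-L_\bG(Z)(I_\cK\otimes A))^{-1}L_\bG(Z)(I_\cK\otimes B)$. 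Introducing the associated ``state vector'' and using $\bU^*\bU\preceq I$ together with $I-L_\bG(Z)L_\bG(Z)^*\succ0$, the standard transfer-function computation gives $I-S(Z)S(Z)^*\succeq0$, i.e.\ $\|S(Z)\|\le1$; hence $S\in\mathcal{SA}_{G}(\cU,\cY)$.

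For $(2)\Rightarrow(3)$ I would run the lurking-isometry construction. Rearranging \eqref{Aglerdecom} yields the formal identity
\[
I_\cY+H(z)\,L_\bG(z)L_\bG(w)^*\,H(w)^* \;=\; H(z)H(w)^*+S(z)S(w)^*,
\]
and comparing coefficients of $\overline{z}^{\alpha^{\top}}w^{\beta}$ shows that the families $\{v_\alpha\}\subset\cL(\cY,\cH_\bR\oplus\cY)$ and $\{u_\alpha\}\subset\cL(\cY,\cH_\bS\oplus\cU)$ read off from the two sides satisfy $v_\alpha^*v_\beta=u_\alpha^*u_\beta$ for all $\alpha,\beta$. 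Thus $v_\alpha y\mapsto u_\alpha y$ extends to an isometry $V$ between the closed spans of the two families; enlarging the multiplicity spaces $\cH_p$ (legitimate, since the multiplicity assignment is part of the data being constructed) I extend $V$ to a unitary $\cH_\bR\oplus\cY\to\cH_\bS\oplus\cU$, set $\bU=V^*=\sbm{A&B\\C&D}$, and unwind the recursion defining $v_\alpha$ and $u_\alpha$ to recover \eqref{real}. The delicate point is that the block pattern of $L_\bG(z)=\sum_{e\in\bE}z_e\,\iota_{\bs(e)}\iota_{\br(e)}^*$ forces $V$, and hence $\bU$, to respect the decompositions $\cH_\bS=\bigoplus_p\cH_{\bS,p}$ and $\cH_\bR=\bigoplus_p\cH_{\bR,p}$; here the hypothesis that each path-connected component of $G$ is a \emph{complete} bipartite graph is exactly what makes this compatibility work out.

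The implication $(1)\Rightarrow(2)$ is the crux. I would argue by a Hahn--Banach/cone-separation in a space of ``hereditary'' formal power series in $z$ and $\overline{w}$ with operator coefficients. Let $\cC$ be the convex cone generated by the hermitian squares $\Gamma(z)\Gamma(w)^*$ together with the weighted expressions $\Gamma(z)\bigl(I-L_\bG(z)L_\bG(w)^*\bigr)\Gamma(w)^*$, where $\Gamma$ runs over operator polynomials valued in $\cL(\cH_\bS,\cdot)$ over all multiplicity structures. If $I-S(z)S(w)^*$ were not in the closure of $\cC$, a strictly separating hermitian functional would exist; a GNS/Kolmogorov construction converts it into a positive semidefinite kernel, hence into a tuple $Z$ on a separable Hilbert space with $\|L_\bG(Z)\|\le1$ (after rescaling, $<1$) at which $\|S(Z)\|>1$ --- contradicting $(1)$. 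So $I-S(z)S(w)^*$ lies in $\overline{\cC}$, and a limiting/compactness argument followed by a Kolmogorov decomposition of the resulting positive kernel produces both the multiplicity assignment $\{\cH_p\colon p\in\bP\}$ and the series $H\in\cL(\cH_\bS,\cY)\langle\langle z\rangle\rangle$ realizing \eqref{Aglerdecom}. The two obstacles here are: (i) choosing the topology on the formal-power-series space so that the separating functional is represented by a genuine inner product --- this needs an Archimedean-type bound, or a truncation to bounded word length followed by a diagonal argument to pass to the limit; and (ii) verifying that the extracted kernel inherits the block-diagonal-over-components structure, so that $H$ and $L_\bG$ have precisely the forms in \eqref{Aglerdecom}, which again relies on the components of $G$ being complete bipartite.
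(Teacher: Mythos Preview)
The paper does not supply a proof of this theorem; it is quoted verbatim from \cite[Theorem~5.3]{BGM2} and used as a black box. Your proposed cycle $(1)\Rightarrow(2)\Rightarrow(3)\Rightarrow(1)$ is exactly the architecture of the proof in \cite{BGM2}, and your identification of the cone-separation step $(1)\Rightarrow(2)$ as the analytic core, with the lurking-isometry step $(2)\Rightarrow(3)$ and the direct computation $(3)\Rightarrow(1)$ as the softer implications, is accurate.

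One correction to your $(2)\Rightarrow(3)$ sketch: the ``delicate point'' you flag is not actually delicate. The colligation $\bU=\sbm{A&B\\C&D}\colon\cH_{\bS}\oplus\cU\to\cH_{\bR}\oplus\cY$ in \eqref{col} is \emph{not} required to respect the component decompositions $\cH_{\bS}=\bigoplus_{p}\cH_{\bS,p}$ and $\cH_{\bR}=\bigoplus_{p}\cH_{\bR,p}$; it is a completely unstructured contraction between those direct-sum spaces. All of the graph structure is carried by the pencil $L_{\bG}(z)$ alone, and the lurking-isometry construction automatically lands you on spaces of the right form because the Agler decomposition \eqref{Aglerdecom} already presents $\cH_{\bS}$ with the correct direct-sum decomposition built in via $L_{\bG}(z)L_{\bG}(w)^{*}$. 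The complete-bipartite hypothesis on the components of $G$ is not what rescues this step; its role in the theory lies elsewhere (state-space similarity and the Kalman decomposition in \cite{BGM1}). Apart from this misattribution, and some minor notational slips (you write $\overline{z}^{\alpha^{\top}}w^{\beta}$ where the paper's convention would give monomials in $z$ and $\overline{w}$), your outline is sound and matches the source.
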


If we are given a colligation matrix $\bU$ as in \eqref{col} and define the
associated formal power series $S(z)$ via \eqref{real}, then it is
possible that $S$ is in the Schur-Agler class even though the
colligation matrix $\bU$ is not contractive; indeed, a sufficient
condition which is weaker than contractivity of $\bU$ is that there exist an
invertible change-of-basis matrix $\Gamma_{p}$ on $\cH_{p}$ for each connected
component $p \in \bP$ of $\bG$ so that the transformed
colligation matrix
$$
\bU' = \left[\begin{matrix} A' & B' \\ C' & D'
\end{matrix}\right] : =
\left[\begin{matrix} \bigoplus_{r \in \bR} \Gamma_{[r]} & 0 \\ 0 & I
\end{matrix}\right]  \left[ \begin{matrix} A & B \\ C & D
\end{matrix} \right]
\left[ \begin{matrix} \bigoplus_{s \in \bS} (\Gamma_{[s]})^{-1} & 0 \\ 0 & I
\end{matrix} \right]
$$
is a contraction:
\begin{equation}   \label{U'con1}
  \left\| \begin{bmatrix} \bigoplus_{r \in \bR} \Gamma_{[r]} & 0 \\ 0 & I
\end{bmatrix}  \begin{bmatrix} A & B \\ C & D \end{bmatrix}
\begin{bmatrix} \bigoplus_{s \in \bS} (\Gamma_{[s]})^{-1} & 0 \\ 0 & I
\end{bmatrix} \right\| \le 1.
\end{equation}
Equivalently, one can ask for
positive definite matrices $\Ga_{p} \succ 0$ on each partial state space
$\cH_{p}$ so that
\begin{equation}   \label{U'con2}
 \begin{bmatrix} A^{*} & C^{*} \\ B^{*} & D^{*} \end{bmatrix}
 \begin{bmatrix} \bigoplus_{r \in \bR} \Gamma_{[r]} & 0 \\ 0 & I
 \end{bmatrix} \begin{bmatrix} A & B \\ C & D \end{bmatrix} -
 \begin{bmatrix} \bigoplus_{s \in \bS} \Gamma_{[s]} & 0 \\ 0 & I
 \end{bmatrix} \preceq   0.
\end{equation}
If we assume that all the spaces $\cH_{p}$ are finite-dimensional and
also impose a structured minimality assumption, this sufficient
condition is also necessary (see Theorem 3.1 in \cite{BGM3}).
A result of this type is known as a {\em Bounded Real Lemma} (see
e.g.~\cite{ZDG}).  The idea of a {\em strict Bounded Real Lemma} (see
e.g.~\cite{PAJ} and Lemma 7.4 in \cite{DP})  is to trade in the
minimality assumption for a stability assumption.  The Bounded Real Lemma in the
context of SNMLSs is the following result.

\begin{theorem}  \label{T:strictBRL}  (See \cite[Theorem 3.4]{BGM3}.)
    Suppose that we are given an A-graph of the form $\bG =
    (G, \{ \bDelta_{p} = \{ s I_{\cH_{p}} \colon s \in {\mathbb C}\}
    \subset \cL(\cH_{p}) \})$, where $\cH_{p}$ is a finite-dimensional Hilbert space for
    each $p \in \bP$, together with  a
    colligation matrix $\bU$ as in \eqref{col}.  Associate with $\bU$
    the formal power series $S(z)$ as in \eqref{real}.
    Then the following conditions are equivalent:
\begin{enumerate}
    \item (i) $A$ is uniformly $\bG$-stable:
    $$
    \sup_{Z \colon \| L_{G}(Z)\| \le 1}\| (I - L_{\bG}(Z) A)^{-1} \|
    < \infty
    $$
and  (ii) there exists a $\rho < 1$ so that $S \in \rho \cdot
\mathcal{SA}_{G}(\cU, \cY)$:
$$
   \sup_{Z \colon \| L_{\bG}(Z) \| \le 1}  \| S(Z) \| \le \rho < 1.
$$

\item There exist invertible matrices $\Gamma_{p}$ on $\cH_{p}$, for each $p \in \bP$, so
that the strict version of condition \eqref{U'con1} holds:
\begin{equation}   \label{strictU'con1}
      \left\| \begin{bmatrix} \bigoplus_{r \in \bR} \Gamma_{[r]} & 0 \\ 0 & I
\end{bmatrix}  \begin{bmatrix} A & B \\ C & D \end{bmatrix}
\begin{bmatrix} \bigoplus_{s \in \bS} (\Gamma_{[s]})^{-1} & 0 \\ 0 & I
\end{bmatrix} \right\| < 1.
\end{equation}

\item There exist strictly positive definite operators $\Gamma_{p}$
on  $\cH_{p}$, for each $p \in \bP$, so that the strict version of
\eqref{U'con2} holds:
\begin{equation}   \label{strictU'con2}
     \begin{bmatrix} A^{*} & C^{*} \\ B^{*} & D^{*} \end{bmatrix}
 \begin{bmatrix} \bigoplus_{r \in \bR} \Gamma_{[r]} & 0 \\ 0 & I
 \end{bmatrix} \begin{bmatrix} A & B \\ C & D \end{bmatrix} -
 \begin{bmatrix} \bigoplus_{s \in \bS} \Gamma_{[s]} & 0 \\ 0 & I
 \end{bmatrix} \prec   0.
\end{equation}
 \end{enumerate}
\end{theorem}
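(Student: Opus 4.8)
The plan is to prove the cycle $(3)\Rightarrow(2)\Rightarrow(1)\Rightarrow(3)$, the last implication being the substantive one, and to record along the way the equivalence $(2)\Leftrightarrow(3)$ as a routine reformulation. For $(2)\Leftrightarrow(3)$: given invertible $\Gamma_p$ as in (2), the positive operators $\Gamma_p^{*}\Gamma_p$ satisfy (3) — write $\|\bU'\|<1$ as $\bU'^{*}\bU'\prec I$ and move the factors $\bigoplus_s\Gamma_{[s]}^{-1}$ and their adjoints across by a congruence, using that ${\rm diag}(\bigoplus_s\Gamma_{[s]},I)^{*}{\rm diag}(\bigoplus_s\Gamma_{[s]},I)={\rm diag}(\bigoplus_s\Gamma_{[s]}^{*}\Gamma_{[s]},I)$; conversely, replacing a positive definite $\Gamma_p$ in (3) by its positive square root reverses the computation.

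For $(2)\Rightarrow(1)$, I would first record the invariance $\bigl(\bigoplus_s\Gamma_{[s]}\bigr)L_\bG(z)=L_\bG(z)\bigl(\bigoplus_r\Gamma_{[r]}\bigr)$, which holds because $L_{\bG,e}=\iota_{\bs(e)}\iota_{\br(e)}^{*}$ and $[\bs(e)]=[\br(e)]$ for every edge $e$; consequently the similarity $\bU\mapsto\bU'$ leaves the realized series unchanged, $S(z)=D'+C'(I-L_\bG(z)A')^{-1}L_\bG(z)B'$. Assuming $\|\bU'\|\le\rho_0<1$, then $\|A'\|\le\rho_0$, so for any $\bE$-tuple $Z$ with $\|L_\bG(Z)\|\le1$ the Neumann series gives $\|(I-L_\bG(Z)A')^{-1}\|\le(1-\rho_0)^{-1}$; conjugating by $I_\cK\otimes\bigl(\bigoplus_s\Gamma_{[s]}\bigr)$ yields a uniform bound on $\|(I-L_\bG(Z)A)^{-1}\|$, which is (1)(i). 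For (1)(ii), I read the state equations $x_\bS=L_\bG(Z)x_\bR$, $x_\bR=A'x_\bS+B'u$, $y=C'x_\bS+D'u=S(Z)u$ off of \eqref{real}, and apply $\bU'^{*}\bU'\preceq\rho_0^{2}I$ (tensored with $I_\cK$) to get $\|x_\bR\|^{2}+\|y\|^{2}\le\rho_0^{2}(\|x_\bS\|^{2}+\|u\|^{2})\le\rho_0^{2}(\|x_\bR\|^{2}+\|u\|^{2})$ since $\|x_\bS\|\le\|x_\bR\|$; hence $\|S(Z)u\|\le\rho_0\|u\|$, so $S\in\rho_0\cdot\mathcal{SA}_{G}(\cU,\cY)$.

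For $(1)\Rightarrow(3)$ I would argue by a cone-separation argument in the real-linear space of self-adjoint operators on $\cH_\bS\oplus\cU$. Since $\rho^{-1}S\in\mathcal{SA}_{G}(\cU,\cY)$, Theorem~\ref{T:BGM2} furnishes an Agler decomposition of $I-\rho^{-2}S(z)S(w)^{*}$; substituting the realization \eqref{real} and using uniform $\bG$-stability of $A$ to make the resulting sums over $\alpha\in\cF_\bE$ (the structured observability gramian of the given realization) converge, one obtains a bounded positive solution $(\Gamma_p)_{p\in\bP}$ of the non-strict inequality \eqref{U'con2}. It then remains to perturb this to a strictly positive definite solution of the strict inequality \eqref{strictU'con2}; equivalently, one separates the convex set of attainable operators $\bU^{*}{\rm diag}\bigl(\bigoplus_r P_{[r]},I\bigr)\bU-{\rm diag}\bigl(\bigoplus_s P_{[s]},I\bigr)$, with $P_p\succ0$, from the negative cone, the separating functional being realized — via the description of $\bDelta_{\bG}'$ in Proposition~\ref{P:intertwining} — as the required block-diagonal pair, and the strict gap $\rho<1$ supplying the slack that makes the separation, and hence the inequality, strict.

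The main obstacle is $(1)\Rightarrow(3)$, and within it the passage from a non-strict solution of \eqref{U'con2} to a strict, strictly positive definite solution of \eqref{strictU'con2}: the Agler decomposition and the gramian construction only deliver the non-strict version, and it is precisely the uniform $\bG$-stability hypothesis — controlling $(I-L_\bG(Z)A)^{-1}$ uniformly over the test set $\{Z\colon\|L_\bG(Z)\|\le1\}$ — that closes the relevant cone and supplies the perturbation, in place of the structured minimality assumption used in the non-strict Bounded Real Lemma. This is the noncommutative analogue of the classical ``minimality versus stability'' trade-off.
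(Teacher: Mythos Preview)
The paper does not prove Theorem~\ref{T:strictBRL} itself; it quotes the result from \cite{BGM3}. Your handling of $(2)\Leftrightarrow(3)$ and $(2)\Rightarrow(1)$ is correct and essentially forced.

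The substantive gap is in $(1)\Rightarrow(3)$. Theorem~\ref{T:BGM2} produces an Agler decomposition $I-\rho^{-2}S(z)S(w)^{*}=H(z)(I-L_{\bG'}(z)L_{\bG'}(w)^{*})H(w)^{*}$ for some \emph{new} multiplicity assignment $\{\cH'_p\}$ and some $H$ with values in $\cL(\cH'_{\bS},\cY)$; equivalently, a contractive realization on a state space that has nothing a~priori to do with the given $\cH_p$. Your phrase ``substituting the realization \eqref{real}'' does not close this gap: inserting $S(z)=D+C(I-L_\bG(z)A)^{-1}L_\bG(z)B$ into the left-hand side of \eqref{Aglerdecom} does not make $H(z)$ equal to $C(I-L_\bG(z)A)^{-1}$---that identity is exactly equivalent to $\bU$ already satisfying \eqref{U'con2}, which is what you are trying to prove. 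So the ``structured observability gramian of the given realization'' has no direct relation to the abstract $H$, and no $\Gamma_p$ on the given $\cH_p$ falls out.

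What is missing is the State-Space Similarity Theorem of \cite{BGM1} (cf.\ Remarks~\ref{R:infdim} and~\ref{R:graphrole} in the paper, which flag it as the engine behind the result in \cite{BGM3}): one passes both the given realization and the contractive one supplied by Theorem~\ref{T:BGM2} to minimal form via the Kalman decomposition of \cite{BGM1}, and the similarity theorem then identifies the two minimal pieces by a structured similarity $(\Gamma_p)_{p\in\bP}$. Uniform $\bG$-stability of $A$ is what allows the non-minimal residue of the given realization to be absorbed (this is precisely the ``minimality versus stability'' trade-off you correctly name). Your final cone-separation paragraph is also off: the $\Gamma_p$ are the \emph{variables} parametrizing the affine family $\bU^{*}\operatorname{diag}(\oplus_r\Gamma_{[r]},I)\bU-\operatorname{diag}(\oplus_s\Gamma_{[s]},I)$, so they cannot simultaneously appear as the separating functional; and one wants this family to \emph{meet} the open negative cone, not to be separated from it.
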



\subsection{Proof of Theorem \ref{T:I}.}\label{S:MainProof1}

For the remainder of this section we follow the notation of Subsections \ref{S:enhanced}
and \ref{S:main}. Hence,
we consider an M-graph $(G,\{\cH_p\colon p\in \bP\})$ where each Hilbert space $\cH_p$ has
the tensored form
$\cH_p=\cK\oplus\cH_p^\circ$ with $\cK$ and $\cH_p^\circ$ Hilbert spaces, $\cK$ separable
and $\cH_p^\circ$ finite dimensional.
As in Subsection~\ref{S:enhanced}, with this M-graph we associate the M-graph
$\bG^\circ=(G,\{\cH_p^\circ \colon p\in \bP\})$ and the A-graph
\[
\ov{\bG}=(G,\{\bDe_p=\cL(\cK)\otimes I_{\cH_p^\circ}\subset\cL(\cH_p),\ p\in\bP\}).
\]
The linear pencils $L_\bG(\la)$ and $L_\bG(Z)$ from Subsection \ref{S:SchurAgler} then
coincide with $L_{\bG^\circ}(\la)$ and
$L_{\bG^\circ}(Z)=L_{\ov{\bG}}(Z')$, respectively, as defined in Subsection \ref{S:enhanced}.
We proceed here with the notation of Subsection \ref{S:enhanced}, i.e.,
with $L_{\bG^\circ}(\la)$ and $L_{\bG^\circ}(Z)$, as well as the
formal pencil $L_{\bG^{\circ}}(z)$ as in \eqref{LGenhanced'}.

Now let us suppose we are given a matrix $M^\circ\in \cL(\cH_{\bS}^\circ, \cH_{\bR}^\circ)$,
where
\begin{align*}
\cH_\bS^\circ&=\bigoplus_{p\in\bP} \wtil{\cH}_{\bS,p}\otimes\cH_p^\circ
=\bigoplus_{p\in\bP} (\oplus_{s\in\bS} \wtil{\cH}_s \otimes\cH_p^\circ),\\
\cH_\bR^\circ&=\bigoplus_{p\in\bP} \wtil{\cH}_{\bR,p}\otimes\cH_p^\circ
=\bigoplus_{p\in\bP} (\oplus_{r\in\bR} \wtil{\cH}_r \otimes\cH_p^\circ),
\end{align*}
where $\wtil{\cH}_s=\wtil{\cH}_r=\BC$ for each $s\in\bS,\, r\in\bR$. As before we set
$M=I_{\cK}\otimes M^\circ\in \cL(\cK\otimes\cH_\bR^\circ,\cK\otimes\cH_\bS^\circ)$.

For the discussion to follow let us introduce the notation
$$
  \overline{\cB}\bDelta_{\bG^\circ}
  = \{ L_{\bG^\circ}(Z) \colon Z = (Z_{e})_{e\in\bE}, Z_e \in \cL(\cK) \text{ with }
  \| L_{\bG^\circ}(Z)\| \le 1\}.
$$
As observed in Remark \ref{R:reduct}, it remains to prove the implication:
\begin{equation}   \label{reduction}
    \widetilde \mu_{\bG^\circ}(M^\circ) < 1\quad \Longrightarrow \quad \widehat
    \mu_{\bG^\circ}(M^\circ) < 1.
\end{equation}

The assumption $\widetilde \mu_{\bG^\circ}(M^\circ) < 1$ implies in
particular that
\begin{equation}  \label{Mstable}
    (I - L_{\bG^\circ}(Z)M)^{-1} \text{ exists for all } Z \text{ with }
    \|L_{\bG^\circ}(Z)\| \le 1.
\end{equation}
We note that the formal structured resolvent $(I - L_{\bG^\circ}(z) M^\circ)^{-1}$ can be
written in realization form \eqref{real}
\begin{equation}   \label{resolvent}
    (I - L_{\bG^\circ}(z) M^\circ)^{-1} = I + I \cdot (I - L_{\bG^\circ}(z)
    M^\circ)^{-1} L_{\bG^\circ}(z) \cdot M^\circ,
\end{equation}
i.e., in the form \eqref{real} with $\left[ \begin{smallmatrix} A & B \\
C & D \end{smallmatrix} \right] = \left[ \begin{smallmatrix} M^\circ & M^\circ \\
I & I \end{smallmatrix} \right]$.  If condition \eqref{Mstable} can
be strengthened to
\begin{equation}   \label{unifMstable}
    \sup_{Z \in \cL(\cK)^{\bE} \colon \| L_{\bG^\circ}(Z) \| \le 1} \| (I - L_{\bG^\circ}(Z) M)^{-1}
    \| < \infty
\end{equation}
then condition (i) in statement (1) of Theorem \ref{T:strictBRL} (with $\bG$ replaced by
$\bG^\circ$) is satisfied with $M^\circ$ in place of $A$.  Moreover, if \eqref{unifMstable}
holds and if we chose a positive number $r$  slightly larger than the
supremum in \eqref{unifMstable}, then the power series $S(z) =
\frac{1}{r} \cdot (I - L_{\bG^\circ}(z) M^\circ)^{-1}$ meets condition (ii) in
statement (1) of Theorem \ref{T:strictBRL}.  From the formula
\eqref{resolvent} we see that this $S(z)$ has a realization
\eqref{real} with
$$
   \begin{bmatrix} A & B \\ C & D \end{bmatrix} = \begin{bmatrix} M^\circ &
       M^\circ \\ \frac{1}{r} I & \frac{1}{r} I \end{bmatrix}.
$$

We may then use the implication (1) $\Rightarrow$ (3) in Theorem
\ref{T:strictBRL} to conclude that there exist strictly positive
definite $\Gamma_{p} \succ 0$  on $\cH_{p}$ ($p \in \bP$) so that
$$
 \begin{bmatrix} (M^\circ)^{*} & \frac{1}{r} I \\ (M^\circ)^{*} & \frac{1}{r} I
 \end{bmatrix} \begin{bmatrix} \bigoplus_{r \in \bR} \Gamma_{[r]} &
 0 \\ 0 & I \end{bmatrix} \begin{bmatrix} M^\circ & M^\circ \\ \frac{1}{r} I &
 \frac{1}{r} \end{bmatrix} - \begin{bmatrix} \bigoplus_{s \in \bS}
 \Gamma_{[s]} & 0 \\ 0 & I \end{bmatrix} \prec 0.
$$
In particular, peeling off the $(1,1)$-entry in this block-matrix
inequality yields
$$
  (M^\circ)^{*} \left( \bigoplus_{r \in \bR} \Gamma_{[r]}\right) M^\circ -
  \bigoplus_{s \in \bS} \Gamma_{[s]} \prec 0
$$
from which we read off that $\widehat \mu_{\bG^\circ}(M^\circ) < 1$ as required.
This analysis completes a proof of Theorem \ref{T:I} pending a justification for
the jump from \eqref{Mstable} to \eqref{unifMstable}.

We note that without loss of generality we may take the separable
infinite-dimensional Hilbert space $\cK$ to be $\ell^{2}$
(the space of square-summable complex-valued sequences indexed by the nonnegative
integers ${\mathbb Z}_{+}$).
We conclude that the following lemma, when specialized to the case $M =
I_{\ell^{2}} \otimes M^{\circ}$ and combined with the analysis in the
previous discussion,  leads to a complete proof of Theorem \ref{T:I}.
The construction of the key operator $\widehat W$ in the proof adapts
ideas from the proof of Proposition B.1 in \cite{DP} which can be
traced further back to the work of Shamma \cite{Shamma}.

\begin{lemma}  \label{L:DP}
Let $M \in \cL(\ell^{2} \otimes \cH^{\circ}_{\bS},
    \ell^{2} \otimes \cH^{\circ}_{\bR})$ be shift invariant:
$M V_{\bS} = V_{\bR} M$ where we set $V_{\bR} = V \otimes
I_{\cH^{\circ}_{\bR}}$,
$V_{\bS} = V \otimes I_{\cH^{\circ}_{\bS}}$ where $V$ is the
unilateral shift operator on $\ell^{2}$:
$$
V \colon (x_{0}, x_{1}, \dots) \mapsto (0, x_{0}, x_{1}, \dots).
$$
Assume that the inverse $(I - L_{{\bG^circ}}(Z) M)^{-1}$ exists for all $\bE$-tuples  $Z= (Z_{e})_{e\in\bE}$ in $\cL(\ell^2)$ such that $\| L_{\bG^\circ}(Z) \| \le 1$.  Then the collection of all such inverses is uniformly bounded:
\begin{equation}   \label{sup}
\sup \{ \| (I - L_{{\bG^\circ}}(Z) M)^{-1} \| \colon Z= (Z_{e})_{e\in\bE},\, Z_e\in\cL(\ell^2) \text{ with }
\| L_{{\bG^\circ}}(Z)\| \le 1\} <
\infty.
\end{equation}
 \end{lemma}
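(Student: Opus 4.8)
The plan is to argue by contradiction: from a sequence that would witness the failure of \eqref{sup} I will assemble a single $\bE$-tuple $\widehat W$ in the unit ball of the test class for which $I-L_{\bG^\circ}(\widehat W)M$ is not boundedly invertible, contradicting the standing hypothesis. The device that makes this possible is that shift invariance forces $M$ to be \emph{causal}: written as a block matrix for $\ell^{2}\otimes\cH^{\circ}_{\bullet}=\bigoplus_{k\ge 0}\cH^{\circ}_{\bullet}$, the identity $MV_{\bS}=V_{\bR}M$ makes it lower triangular and Toeplitz, so $M$ is an analytic Toeplitz operator. Hence, if a vector $v$ is supported on a finite coordinate block $[t,t+N)$, then $Mv$ is supported on $[t,\infty)$, the compression of $M$ to $[t,t+N)$ does not depend on $t$, and the tail of the fixed $\ell^{2}$ vector $Mv$ past any coordinate tends to $0$.

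So suppose the supremum in \eqref{sup} is infinite, and choose $Z^{(n)}=(Z^{(n)}_{e})_{e\in\bE}$, $Z^{(n)}_{e}\in\cL(\ell^{2})$, with $\|L_{\bG^\circ}(Z^{(n)})\|\le 1$ and $\|(I-L_{\bG^\circ}(Z^{(n)})M)^{-1}\|\ge n$. Then there are unit vectors $v_{n}\in\ell^{2}\otimes\cH^{\circ}_{\bS}$ with $\|(I-L_{\bG^\circ}(Z^{(n)})M)v_{n}\|\le 2/n$. Next I would truncate. Let $P_{N}$ be the projection of $\ell^{2}$ onto its first $N$ coordinates; since $P_{N}\to I$ strongly and $M$, $L_{\bG^\circ}$ are bounded, one can pick $N_{n}$ so large that, with $Z^{(n),N}_{e}:=P_{N_{n}}Z^{(n)}_{e}P_{N_{n}}$, $M^{N}:=(P_{N_{n}}\otimes I)M(P_{N_{n}}\otimes I)$ and $v_{n}^{N}:=(P_{N_{n}}\otimes I)v_{n}$, the vector $u_{n}:=(I-L_{\bG^\circ}(Z^{(n),N})M^{N})v_{n}^{N}$ satisfies $\|u_{n}\|\le 3/n$ and $\|v_{n}^{N}\|\ge\tfrac12$. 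Note $L_{\bG^\circ}(Z^{(n),N})=(P_{N_{n}}\otimes I)L_{\bG^\circ}(Z^{(n)})(P_{N_{n}}\otimes I)$, hence $\|L_{\bG^\circ}(Z^{(n),N})\|\le 1$; also $\|Z^{(n)}_{e}\|\le\|L_{\bG^\circ}(Z^{(n)})\|\le 1$, by compressing $L_{\bG^\circ}(Z^{(n)})$ with $\iota^{\circ}_{\bs(e)}$ and $\iota^{\circ}_{\br(e)}$ to the block indexed by $e$.

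The heart of the argument is $\widehat W$. Partition $\ell^{2}=\ell^{2}(\BZ_{+})$ into consecutive finite ``working blocks'' $B_{n}=[t_{n},t_{n}+N_{n})$, $t_{1}=0$, separated by gaps, where—after $v_{n}^{N}$ and $N_{n}$ are fixed—the integer $t_{n+1}>t_{n}+N_{n}$ is chosen so large that the tail of $M\widehat v_{n}$ past coordinate $t_{n+1}$ has norm $<2^{-n}$; here $\widehat v_{n}\in\ell^{2}\otimes\cH^{\circ}_{\bS}$ is $v_{n}^{N}$ re-placed on $B_{n}$, and the choice is possible because $M\widehat v_{n}$ is a fixed $\ell^{2}$ vector. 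Define $\widehat W=(\widehat W_{e})_{e\in\bE}$ by letting $\widehat W_{e}$ act as the copy of $Z^{(n),N}_{e}$ on $B_{n}$ for each $n$ and as $0$ on the gaps; each $\widehat W_{e}$ is then a contraction on $\ell^{2}$, block diagonal for $\ell^{2}=\bigoplus_{n}(B_{n})\oplus(\text{gaps})$, so $L_{\bG^\circ}(\widehat W)=\bigoplus_{n}L_{\bG^\circ}(Z^{(n),N})\oplus 0$ has norm $\le 1$ and $\widehat W$ is admissible. To see $I-L_{\bG^\circ}(\widehat W)M$ is not bounded below, apply it to $\widehat v_{n}$: by causality $M\widehat v_{n}$ is supported on $[t_{n},\infty)$, hence has no component on $B_{m}$ for $m<n$; its component on $B_{n}$ is the copy of $M^{N}v_{n}^{N}$ (Toeplitz invariance of the compression); its combined component on $\bigcup_{m>n}B_{m}\subseteq[t_{n+1},\infty)$ has norm $<2^{-n}$; and its component on the gaps is killed by $\widehat W$. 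Since $\widehat W$ is block diagonal, $L_{\bG^\circ}(\widehat W)M$ acts block by block, so the $B_{n}$-component of $(I-L_{\bG^\circ}(\widehat W)M)\widehat v_{n}$ is exactly the copy of $u_{n}$ (norm $\le 3/n$), the components on $B_{m}$ for $m>n$ have combined norm $<2^{-n}$, and all other components vanish. Thus $\|(I-L_{\bG^\circ}(\widehat W)M)\widehat v_{n}\|\le 3/n+2^{-n}\to 0$ while $\|\widehat v_{n}\|\ge\tfrac12$, so $I-L_{\bG^\circ}(\widehat W)M$ is not bounded below, hence not boundedly invertible—contradicting the hypothesis since $\|L_{\bG^\circ}(\widehat W)\|\le 1$. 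Therefore the supremum in \eqref{sup} is finite.

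The two truncation estimates and the support bookkeeping are routine; the step requiring genuine care—and the one that forces the shift-invariance hypothesis—is the block-by-block evaluation of $(I-L_{\bG^\circ}(\widehat W)M)\widehat v_{n}$, i.e.\ controlling the ``cross-talk'' of $M\widehat v_{n}$ into the other working blocks. Causality of $M$ removes all cross-talk into earlier blocks, while the $\ell^{2}$-decay of each fixed vector $M\widehat v_{n}$, exploited by spacing the blocks ever farther apart, makes the cross-talk into the later blocks summably small. (This construction of $\widehat W$ is the one anticipated before the statement of the lemma, adapting the proof of Proposition B.1 in \cite{DP}.)
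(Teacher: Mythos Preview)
Your proof is correct and follows essentially the same Shamma--type block construction as the paper: both argue by contradiction, truncate the near-null vectors and perturbations to finite intervals, and then assemble a single admissible $\widehat \De$ (your $\widehat W$) as a block-diagonal direct sum, using the estimate $\|(I-P_{[t_n,t_{n+1})})M\widehat q^{(n)}\|<\epsilon_n$ (the paper's \eqref{est4a}) exactly where you use the tail bound on $M\widehat v_n$ to control forward cross-talk. The only cosmetic differences are that you make the causality of $M$ (lower-triangular Toeplitz form) explicit from the outset and insert gap intervals on which $\widehat W=0$, whereas the paper shifts first and uses a contiguous partition $\{[t_n,t_{n+1})\}$ of $\mathbb Z_+$; neither change affects the substance of the argument.
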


\begin{proof}
For integers $0\leq n_0\leq N$, let $\ell^{2}[n_0,N]$ denote the subspace of sequences in $\ell^2$ with support in the positions indexed by $n_0,\ldots, N$; similarly $\ell^{2}[n_0,N)$ and $\ell^{2}[n_0,\infty)$ stand for the subspaces $\ell^2$ with support in $n_0,\ldots,N-1$ and $n_0,\ldots$.  As a matter of notation we write $P_{[n_{0},N]}$ for the
orthogonal projection of $\ell^{2} \otimes \cX$ onto $\ell^{2}[n_0,N]
\otimes \cX$  (where $\cX$ is either $\cH^{\circ}_{\bS}$ or
$\cH^{\circ}_{\bR}$ depending on the context); when $n_{0} = 0$ we
write more simply $P_{N}$ rather than $P_{[0,N]}$.

We proceed by contradiction.
     Suppose that $(I - \De M)^{-1}$ exists for all $\De \in
     \overline{\cB}\bDelta_{{\bG^\circ}}$ but that the supremum in
     \eqref{sup} is infinite.  Fix any sequence of positive numbers
     $\epsilon_{n} > 0$ such that $\lim_{n \to \infty} \epsilon_{n} =
     0$. Then we can find $\De^{(n)}\in \overline{\cB}\bDelta_{{\bG^\circ}}$, i.e.,
\begin{equation}\label{DenStruc}
\De^{(n)} = {\rm diag}_{p \in \bP}
     W_{p}^{(n)} \otimes I_{\cH^{\circ}_{p}}\quad \mbox{with}\quad \| \De^{(n)}\| \le
     1,
\end{equation}
along with unit vectors $q^{(n)} \in \ell^{2} \otimes
     \cH^{\circ}_{\bS} = \bigoplus_{p \in \bP} \ell^{2} \otimes
     \widetilde \cH_{\bS_{p}} \otimes \cH^{\circ}_{p}$
     so that
     \begin{equation}   \label{est1}
\| (I - \De^{(n)} M) q^{(n)} \| < \epsilon_{n}.
\end{equation}

Observe that then, for any $n_{0} \in {\mathbb Z}_{+}$,
\begin{align*}
    \epsilon_{n} & > \| (I - \De^{(n)}M) q^{(n)}\| \\
    & = \| V_{\bS}^{n_{0}} (I - \De^{(n)} M)
    q^{(n)} \| \\
& = \| V_{\bS}^{n_{0}}  q^{(n)} - V_{\bS}^{n_{0}} \De^{(n)}V_{\bR}^{* n_{0}} V_{\bR}^{n_{0}}
M q^{(n)} \| \\
 & =  \| V_{\bS}^{n_{0}}  q^{(n)} -
    \widetilde \De^{(n)} M  V_{\bS}^{n_{0}} q^{(n)} \| \\
    & = \| (I - \widetilde \De^{(n)} M) \widetilde q^{(n)} \|
\end{align*}
where we have set
$$
\widetilde  \De^{(n)} =  V_{\bS}^{n_{0}} \De^{(n)}
    V_{\bR}^{* n_{0}}, \quad \widetilde
    q^{(n)} =   V_{\bS}^{n_{0}} q^{(n)}
$$
and we used the assumed shift-invariance property $V_{\bR} M = M V_{\bS}$ of $M$.
Using the representation of $\overline{\cB}\bDelta_{{\bG^\circ}}$ in \eqref{uncer-en2}, it follows that $\widetilde \De^{(n)}$ is in $\overline{\cB}\bDelta_{{\bG^\circ}}$, since $\De^{(n)}$ is in $\overline{\cB}\bDelta_{{\bG^\circ}}$ (see \eqref{DenStruc}). Moreover, $\widetilde q^{(n)}$ is again a unit vector,
but now with support in $[n_{0}, \infty)$. Also $\widetilde \De^{n}$
maps $\ell^{2}([n_{0}, \infty)) \otimes \cH^{\circ}_{\bR}$ into
$\ell^{2}([n_{0}, \infty)) \otimes \cH^{\circ}_{\bS}$.
We conclude that without
loss of generality we may assume that \eqref{est1} holds with the
additional normalization that the unit vector $q^{(n)}$ has support
in $[n_{0}, \infty)$ and $\De^{(n)} \in
\overline{\cB}\bDelta_{{\bG^\circ}} \cap \cL(\ell^{2}[n_{0},
\infty) \otimes \cH^{\circ}_{\bR}, \ell^{2}[n_{0},
\infty) \otimes \cH^{\circ}_{\bS})$ where $n_{0}$ is any nonnegative integer of our
choosing.

 A familiar fact
is that $P_{N} \to I$ strongly as $N \to \infty$.  We now develop
several consequences of this observation.

From the identity
\begin{align*}
& (I - P_{N} \De^{(n)} M) P_{N} q^{(n)} =  \\
&\qquad (I - \De^{(n)} M) P_{N} q^{(n)} +
(I - P_{N}) \De^{(n)} M q^{(n)} - (I - P_{N}) \De^{(n)} M (I - P_{N})
q^{(n)}
\end{align*}
we get the estimate
\begin{align*}
   & \| (I - P_{N} \De^{(n)} M) P_{N} q^{(n)}\|  \le
    \| (I - \De^{(n)} M)P_{N} q^{(n)} \| + \| (I - P_{N}) \De^{(n)} M
    q^{(n)} \|  \\
    & \qquad\qquad + \| (I - P_{N}) \De^{(n)} M (I - P_{N}) q^{(n)} \| \\
    & \quad\qquad\le \| (I - \De^{(n)} M) P_{N} q^{(n)} \| + \| (I - P_{N})
    \De^{(n)} M q^{(n)} \| + \| M \| \| (I - P_{N}) q^{(n)} \|.
\end{align*}
By the strong convergence of $\{P_{N}\}$ to the identity operator, the last two terms of the final
expression tend to 0 as $N \to \infty$.  We arrive at the estimate
\begin{equation}  \label{est1'}
     \| (I - P_{N} \De^{(n)} M) P_{N} q^{(n)}\| < \epsilon_{n} \text{ for }
     N \text{ sufficiently large.}
\end{equation}
As we are assuming that $q^{(n)}$ has support in $[n_{0}, \infty)$,
from the shift-invariance of $M$ and the observation made above that
$\De^{(n)}$ preserves signals with support in $[n_{0}, \infty)$, we see
that \eqref{est1'} can be rewritten as
\begin{equation}   \label{est1''}
    \| (I - P_{[n_{0}, N)} \De^{(n)} M) P_{[n_{0}, N)}q^{(n)} \| < \epsilon
\end{equation}
for $N$ sufficiently large.
  Note that $\operatorname{supp}  q^{(n)} \subset [n_{0}, \infty)$
  implies that $\operatorname{supp}
M q^{(n)} \subset [n_{0}, \infty)$ since $M$ by assumption is shift
invariant. We next use the identity
$$
  (I - P_{N})M P_{N} q^{(n)} = (I - P_{N}) M q^{(n)} - (I - P_{N}) M
  (I - P_{N}) q^{(n)}
$$
to get the estimate
\begin{align*}
    \| (I - P_{N}) M P_{N} q^{(n)} \| & \le
 \| (I - P_{N}) M q^{(n)} \| + \| (I-P_{N}) M (I -  P_{N}) q^{(n)} \| \\
 & \le \| (I - P_{N} ) M q^{(n)} \| + \| M \|  \| (I - P_{N}) q^{(n)} \|.
\end{align*}
As another consequence of the strong convergence of $P_{N}$ to
the identity operator, we see that, by choosing
$N$ still larger if necessary, we may arrange that in addition to
\eqref{est1''} we have
\begin{equation}  \label{est2}
    \| (I - P_{[n_{0}, N)}) M P_{[n_{0}, N)} q^{(n)} \| < \epsilon_{n}.
\end{equation}
Moreover, if we note that
\begin{align*}
  &  \| (I - P_{[n_{0}, N)} \De^{(n)} P_{[n_{0}, N)} M) P_{n_{0}, N)}
    q^{(n)} \|  \\
    & \quad \le
 \| (I - P_{[n_{0},N)} \De^{(n)} M) P_{[n_{0},N)} q^{(n)} \|
  + \| P_{[n_{0}, N)} \De^{(n)} (I - P_{[n_{0},N)}) M P_{[n_{0}, N)}
 q^{(n)} \|   \\
 & \quad \le  \| (I - P_{[n_{0},N)} \De^{(n)} M) P_{[n_{0},N)} q^{(n)} \|  +
 \| (I - P_{[n_{0},N)}) M P_{[n_{0}, N)} q^{(n)}  \|,
 \end{align*}
 we see as a consequence of the estimates \eqref{est1''} and
 \eqref{est2} that
 \begin{equation} \label{est3}
     \| (I - P_{[n_{0}, N)} \De^{(n)} P_{[n_{0}, N)} M) P_{[n_{0}, N)}
     q^{(n)}\| < 2 \epsilon_{n}.
 \end{equation}
Furthermore,  by rescaling and taking $N$ still larger if necessary,
we may assume in addition that $P_{[n_{0}, N)} q^{(n)}$ is a unit
vector.  By now setting $\widehat{q}^{(n)}=P_{[n_{0}, N)}
q^{(n)}$ and $\widehat \De^{(n)} = P_{[n_{0}, N)} \De P_{[n_{0},
N)}$, and rewriting \eqref{est3} and \eqref{est2} in the new notation, we
arrive at the following result of all this discussion:  {\em  for each $n_{0} \in
{\mathbb Z}_{+}$, there is a choice of sufficiently large $N \in
{\mathbb Z}_{+}$ so that the following holds true:
 there is a unit vector $\widehat{q}^{(n)}$ in $\ell^{2}[n_{0}, N)\otimes \cH^{\circ}_{\bS}$
and an operator $\widehat \De^{(n)}$ in
$\overline{\cB}\bDelta_{{\bG^\circ}} \cap \cL(\ell^{2}[n_{0},
N) \otimes \cH^{\circ}_{\bR}, \ell^{2}[n_{0},
N) \otimes \cH^{\circ}_{\bS})$  such that}
\begin{align}
    & \| (I - \widehat \De^{(n)} M) \widehat{q}^{(n)} \| < 2 \epsilon_{n}\quad\mbox{and}\quad
    \| (I - P_{[n_{0}, N)} ) M \widehat{q}^{(n)} \| < \epsilon_{n}.
    \label{est4a}
\end{align}

    By proceeding inductively, we may assume furthermore that the support
of $\widehat{q}^{(n)}$ is in an interval of the form $[t_{n}, t_{n+1}) \subset
{\mathbb Z}_{+}$ with $t_{0} = 0$ in such a way that these intervals
form a complete partition of ${\mathbb Z}_{+}$.  In this new notation
$\widehat{\De}^{(n)}$ is in
$\overline{\cB}\bDelta_{{\bG^\circ}} \cap \cL(\ell^{2}[t_{n}, t_{n+1}) \otimes \cH^{\circ}_{\bR}, \ell^{2}[t_{n}, t_{n+1}) \otimes \cH^{\circ}_{\bS})$.  If we
set $\widehat \De = \sum_{n=0}^{\infty} \widehat \De^{(n)} P_{[t_{n},
t_{n+1})}$, then $\| \widehat \De \| \le 1$ since each $\widehat
\De^{(n)}$ is contractive and furthermore $\widehat \De$ still has the
block diagonal structure to qualify as an element of
$\bDelta_{{\bG^\circ}}$, i.e., $\widehat \De \in
\overline{\cB}\bDelta_{{\bG^\circ}}$.  We now apply $(I - \widehat
\De M)$ to $\widehat{q}^{(n)}$ and estimate the norm of the result:
\begin{align*}
    \| ( I - \widehat \De M) \widehat{q}^{(n)} \| &  =
    \| (I - \widehat \De \{ P_{[t_{n}, t_{n+1})} + (I - P_{[t_{n},
    t_{n+1})})\} ) M ) \widehat{q}^{(n)} \|  \\
    & = \| (I - \widehat \De^{(n)} M) \widehat{q}^{(n)} - \widehat \De (I -
    P_{[t_{n}, t_{n+1})}) M \widehat{q}^{(n)} \|  \\
    & \le \| (I - \widehat \De^{(n)} M) \widehat{q}^{(n)} \| + \| (I - P_{[t_{n},
    t_{n+1})}) M \widehat{q}^{(n)} \| \\
    & < 2 \epsilon_{n} + \epsilon_{n} = 3 \epsilon_{n}
\end{align*}
where we used \eqref{est4a} for the last inequality.
As each $\widehat{q}^{(n)}$ is a unit vector and $3\epsilon_{n} \to 0$ as $n
\to \infty$, we conclude that $I - \widehat \De M$ cannot be
invertible, despite the fact that $\widehat \De \in \overline{\cB}
\bDelta_{\overline{\bG}}$.  This contradiction to our underlying
hypothesis completes the proof of Lemma \ref{L:DP}.
\end{proof}

\begin{remark} \label{R:mu-to-ncBRL}
   {\em  We have seen that the strict Bounded Real Lemma
    (Theorem \ref{T:strictBRL}) with the help of Lemma \ref{L:DP}
    implies the Main Result (Theorem \ref{T:I}).  It is of
    interest that conversely Theorem \ref{T:I} implies Theorem
    \ref{T:strictBRL} by a simple direct argument as follows.
    Suppose that we are given a colligation matrix $\left[
    \begin{smallmatrix} A & B \\ C & D \end{smallmatrix} \right]$ as
	in Theorem \ref{T:strictBRL}.  By hypothesis we have
$$
\| I_{\ell^{2}} \otimes D +
(I_{\ell^{2}} \otimes C) (I - \De_{1} (I_{\ell^{2}} \otimes A))^{-1} \De_{1}
(I_{\ell^{2}} \otimes B) \| \le \rho < 1
$$
for all $\De_{1} \in \overline{\cB} \bDelta_{{\bG^\circ}}$.  By a
Schur-complement argument (see  \cite[Theorem 11.7]{ZDG} known as the Main Loop
Theorem), this is the same as
the block $2 \times 2$ matrix
$\left[\begin{smallmatrix} I & 0 \\ 0 & I \end{smallmatrix} \right]
-\left[ \begin{smallmatrix} \De_{1}
    & 0 \\ 0 & \De_{2} \end{smallmatrix} \right]
    \left[ \begin{smallmatrix} A
    & B \\ C & D \end{smallmatrix} \right]$ being invertible for
    all $\De_{1} \in \overline{\cB} \bDelta_{{\bG^\circ}}$ and
    $\De_{2} \in \overline{\cB} \bDelta_{\rm full}$, where we set
    $\bDelta_{\rm full}$ equal to the set of all operators from $\cU$
    to $\cY$.  This in turn is the same as the statement
 $$
   \mu_{\bDelta_{{\bG}^\circ} \oplus \bDelta_{\rm
   full}} \left(I_{\ell^{2}} \otimes \left[ \begin{smallmatrix} A & B \\ C
   & D \end{smallmatrix} \right] \right) < 1.
 $$
An application of Theorem \ref{T:I} now tells us that there exists a
positive-definite matrix $\Gamma^{\circ}_{p}$ on $\cH^{\circ}_{p}$
for each $p \in \bP$  and a positive real number $r > 0$ so that
$$
\begin{bmatrix} A^{*} & C^{*} \\ B^{*} & D^{*} \end{bmatrix}
    \begin{bmatrix} \bigoplus_{r \in \bR} \Gamma^{\circ}_{[r]} & 0 \\
    0 & s I_{\cY} \end{bmatrix}
  \begin{bmatrix} A & B \\ C & D \end{bmatrix} -
      \begin{bmatrix} \bigoplus_{s \in \bS} \Gamma^{\circ}_{[s]} & 0
	  \\ 0 & s I_{\cU} \end{bmatrix} \prec 0.
$$
If we divide out by the positive numbers $s$ and replace
$\Gamma^{\circ}_{p}$ by $\frac{1}{s} \cdot \Gamma^{\circ}_{p}$ for
each $p \in \bP$, we arrive  at exactly statement (3) in Theorem
\ref{T:strictBRL}.

This analysis can be taken one step further to get a new proof of the
strict version of the realization result Theorem \ref{T:BGM2} as
follows.  Given a rational formal power series in the strict
Schur-Agler class, using results from
\cite{BGM1} (closely related to the much earlier realization results
of Fliess \cite{Fliess}),  one can obtain a finite-dimensional
colligation matrix $\bU$ as in \eqref{col} giving rise to a
realization \eqref{real} for $S(z)$.  Then use the strict Bounded
Real Lemma (which as we have just seen is a direct consequence of the
$\widetilde \mu = \widehat \mu$ result Theorem \ref{T:I}) to obtain a
structured state-space similarity transforming the colligation matrix
$\bU = \left[ \begin{smallmatrix} A & B \\ C & D \end{smallmatrix}
\right]$ to the strictly contractive colligation matrix $\bU' =
\left[ \begin{smallmatrix} A' & B' \\ C' & D' \end{smallmatrix}
\right]$. Then $\bU'$ is a strictly contractive colligation matrix
with transfer function \eqref{real} (with $\bU'$ in place of $\bU$)
equal to $S(z)$, and the strict version of Theorem \ref{T:BGM2}
follows.
}\end{remark}

\begin{remark}  \label{R:infdim}
{\em It is possible to note now that Theorem \ref{T:I} cannot be true if
any of the partial state spaces $\cH_{p}$ is allowed to be
infinite-dimensional and/or if the graph $\bG$ is allowed to be
infinite.  Indeed it is known (see \cite{AKP}) that the Bounded Real
Lemma fails if the state space is allowed to be infinite-dimensional;
the proof relies on the State Space Similarity Theorem which in turn
only guarantees a possibly unbounded pseudo-similarity in the
infinite-dimensional setting rather than a properly bounded and
boundedly invertible similarity.  A simple adaptation of the example
given in \cite{AKP} shows that the strict
Bounded Real Lemma also fails in the case of of infinite-dimensional
state space as well. By the preceding Remark \ref{R:mu-to-ncBRL}, Theorem
\ref{T:I}  is equivalent to the Bounded Real Lemma in the free
noncommutative setting.  We conclude that Theorem \ref{T:I} cannot
hold in general when $\cH_{p}$ is allowed to be infinite-dimensional
or if the graph $\bG$ is allowed to have infinitely many connected
components.

It is interesting to note however that Lemma \ref{L:DP} apparently
does not require the finite-dimensionality of the coefficient spaces
$\cH^{\circ}_{\bR}$ and $\cH^{\circ}_{\bS}$; one only requires that
the operator $M$ be shift-invariant with respect to the pair of
shifts $(V_{\bS}, V_{\bR})$, even possibly of infinite multiplicity.

In our second proof of Theorem \ref{T:I} in Section \ref{S:DPproof},
the reader will notice several places where the finite-dimensionality
of the coefficient spaces $\cH^{\circ}_{p}$ and the finiteness of the
graph are used---see in particular the assumed equivalence of
Hilbert-Schmidt norm and operator norm in the verification of Step 1
and in the estimate \eqref{estimate}.
} \end{remark}

\begin{remark}  \label{R:graphrole} {\em In the {\em graded version} of the
 structured ball
$$
\cB \bDelta_{{\bG^\circ}} =
    \{ L_{\bG^{\circ}}(Z) = \sum_{e \in \bE} Z_{e} \otimes
    L_{\bG^{\circ},e}\colon Z_{e} \in\cL(\cK),\,
    \|L_{\bG^{\circ}}(Z)\| < 1\},
$$
one restricts $Z_{e}$ to finite square matrices $Z_{e}
\in {\mathbb C}^{n \times n}$ for every matrix size $n=1,2,\dots$
rather than  letting $Z_{e}$ range over all bounded linear operators
on a fixed infinite-dimensional separable Hilbert space $\cK$.  The
preimage of this graded structured ball under the pencil, namely
$$
\cB \bDelta^{\rm pre, graded}_{{\bG^\circ}}
= \{ Z = (Z_{e})_{e \in \bE} \colon Z_{e} \in {\mathbb C}^{n \times
n} \text{ for } n = 1,2,\dots,\, \|L_{\bG^{\circ}}(Z) \| < 1\}
$$
corresponds to the noncommutative pencil ball studied by Helton,
Klep, McCullough and Slinglend in \cite{HKMcCS2009, HKMcC2011}.
Actually these authors consider the more general setting where the
formal pencil $L_{\bG^{\circ}}(z)$ is replaced by a general formal
pencil  $L(z) = \sum_{e \in \bE} L_{e} z_{e}$ where here $\bE$ is now just
a convenient index set and the coefficients $L_{e}$ no longer have
any connection with an underlying graph.  More generally, Agler and
McCarthy \cite{AMcCGlobal} obtained a graded version of the
realization result Theorem \ref{T:BGM2}, where the structured matrix
pencil $L_{\bG^{\circ}}(z)$ is replaced by an arbitrary
formal polynomial $\delta(z)$ with matrix coefficients, thereby
obtaining a graded noncommutative analogue of the commutative result
of Ball-Bolotnikov \cite{BB} and Ambrozie-Timotin \cite{AT}.
This more general formalism led to new results on polynomial approximation and rigidity results for proper analytic maps between such domains, respectively for the noncommutative setting.
We point out here, however, that when one replaces the
structure noncommutative  pencil $L_{\bG^{\circ}}(z)$ by a general
noncommutative pencil $L(z)$ or a general matrix noncommutative
polynomial $\delta(z)$,  one loses other results involving the more
detailed structure of the associated noncommutative linear systems,
specifically, the State Space Similarity Theorem from \cite{BGM1} and
hence also the Bounded Real Lemma from \cite{BGM3} (Theorem
\ref{T:strictBRL}).
}\end{remark}

\section{Noncommutative structured singular value versus diagonal scaling: a direct
convexity argument for the higher multiplicity case}  \label{S:DPproof}

In this section we present our second proof of the Main Result
(Theorem \ref{T:I}), this time based on the convexity-analysis
approach of Dullerud and Paganini \cite{Paganini, DP}.
In fact the approach enables one to prove the following more general
formulation of Theorem \ref{T:I}. Note that Theorem \ref{T:I} follows from the
following Theorem \ref{T:I'} by setting $M = I_{\ell^{2}} \otimes
M^{\circ}$.

\begin{theorem}   \label{T:I'}
    Let $\overline{\bG}$, $\bG$, and $\bG^\circ$ be as in Section
    \ref{S:enhanced} with $\cK = \ell^{2}$, let $M$ be a linear
    operator from the space
 $$
   \cH_{\bS} = \ell^{2} \otimes \cH^{\circ}_{\bS} = \bigoplus_{p \in
   \bP}\left( \ell^{2}  \otimes \widetilde \cH_{\bS,p} \otimes
   \cH^{\circ}_{p} \right)
 $$
 to the space
$$
   \cH_{\bR} = \ell^{2} \otimes \cH^{\circ}_{\bR} = \bigoplus_{p \in
   \bP} \left( \ell^{2}  \otimes \widetilde \cH_{\bR,p} \otimes
   \cH^{\circ}_{p} \right)
 $$
 which is shift-invariant:
 $$V_{\bR} M = M V_{\bS}\quad \text{where}\quad V_{\bR} = V \otimes
 I_{\cH^{\circ}_{\bS}},\quad  V_{\bS} = V \otimes I_{\cH^{\circ}_{\bR}}
 $$
 with $V$ is the unilateral shift operator on $\ell^{2}$.  Assume
 also that
 \begin{enumerate}
     \item[(i)] the graph $\bG^{\circ}$ has only finitely many
     components, and
     \item[(ii)] each coefficient space $\cH^{\circ}_{p}$ is
     finite-dimensional.
  \end{enumerate}
 Then
 $$
 \mu_{\bDelta_{\overline{\bG}}}(M) =
 \widehat \mu_{\bDelta_{\overline{\bG}}}(M).
 $$
 In particular, the following conditions are equivalent:
\begin{enumerate}
     \item  $\mu_{\bDelta_{\overline{\bG}}}(M) < 1$, i.e.,
     $(I - \De M)^{-1}$ exists for all $\De = {\rm diag}_{p \in
     \bP} W_{p} \otimes I_{\cH^{\circ}_{p}}$ with $W_{p} \in
     \rho \overline{\cB}\cL(\ell^{2} \otimes \widetilde \cH_{\bR,p},
     \ell^{2} \otimes \widetilde \cH_{{\bS,p}})$ for some $\rho > 1$.

     \item $\widehat \mu_{\bDelta_{\overline{\bG}}}(M) < 1$, i.e.,
     there exists operators $\Gamma^{\circ}_{p} \succ 0$ on
     $\cH^{\circ}_{p}$ so that
\begin{equation}   \label{muhatcond}
  M^{*} \left( \bigoplus_{p \in \bP} I_{\ell^{2} \otimes \widetilde
  \cH_{\bR,p}} \otimes \Gamma^{\circ}_{p} \right) M -
  \left( \bigoplus_{p \in \bP} I_{\ell^{2} \otimes \widetilde
  \cH_{\bS,p}} \otimes \Gamma^{\circ}_{p} \right) \prec 0.
\end{equation}
\end{enumerate}
\end{theorem}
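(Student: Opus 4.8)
The plan is to prove, in the spirit of Dullerud--Paganini \cite{Paganini, DP}, the inequality $\widehat\mu_{\bDelta_{\overline{\bG}}}(M)\le\mu_{\bDelta_{\overline{\bG}}}(M)$ (the reverse being \eqref{muGineq}) by a finite-dimensional cone--separation argument. Since $\mu_{\bDelta_{\overline{\bG}}}$ and $\widehat\mu_{\bDelta_{\overline{\bG}}}$ are positively homogeneous in $M$, and $M/c$ satisfies the same hypotheses for every $c>0$, it suffices to show that $\mu_{\bDelta_{\overline{\bG}}}(M)<1$ forces the structured Stein inequality \eqref{muhatcond} to be solvable. \emph{Step 1 (uniform well-posedness).} Given $\mu_{\bDelta_{\overline{\bG}}}(M)<1$, pick $\rho>1$ with $I-\De M$ invertible for all $\De\in\bDelta_{\overline{\bG}}$, $\|\De\|\le\rho$, and apply Lemma \ref{L:DP} to the (still shift-invariant) operator $\rho M$: the family of inverses over $\|\De\|\le\rho$ is uniformly bounded, so there is $\beta>0$ with $\|(I-\De M)x\|\ge\beta\|x\|$ for all $x\in\cH_{\bS}$ and all $\De\in\bDelta_{\overline{\bG}}$ with $\|\De\|\le 1$. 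This is the only place where shift-invariance of $M$ enters, and it needs no finite-dimensionality.

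\emph{Step 2 (structured Douglas reformulation).} For $w$ in one of the factors $\ell^2\otimes\widetilde{\cH}_{\bR,p}\otimes\cH^{\circ}_{p}$ or $\ell^2\otimes\widetilde{\cH}_{\bS,p}\otimes\cH^{\circ}_{p}$, let $\langle w\rangle_{p}\in\cL(\cH^{\circ}_{p})$ be the positive operator determined by $\langle(I\otimes\Gamma)w,w\rangle={\rm tr}(\Gamma\langle w\rangle_{p})$ for all $\Gamma\in\cL(\cH^{\circ}_{p})$ (so ${\rm tr}\langle w\rangle_{p}=\|w\|^2$); under the identification of Proposition \ref{P:tensor} this is, up to the conjugation there, the operator $U[w]^{*}U[w]$ of Proposition \ref{P:Douglas}. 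Writing a general element of $\bDelta_{\overline{\bG}}$ as ${\rm diag}_{p\in\bP}(W_{p}\otimes I_{\cH^{\circ}_{p}})$ and applying Proposition \ref{P:Douglas} componentwise, $v=\bigoplus_{p}v_{p}\in\cH_{\bS}$ equals $\De Mx$ for some $\De\in\bDelta_{\overline{\bG}}$ with $\|\De\|\le1$ \emph{exactly when} $\langle v_{p}\rangle_{p}\preceq\langle(Mx)_{p}\rangle_{p}$ for every $p\in\bP$; thus Step 1 becomes the ``Gram-level'' statement that $\langle v_{p}\rangle_{p}\preceq\langle(Mx)_{p}\rangle_{p}$ ($\forall p$) implies $\|x-v\|\ge\beta\|x\|$. \emph{Step 3 (set-up of the separation).} Work in the \emph{finite-dimensional} real space $\cE:=\bigoplus_{p\in\bP}\{T\in\cL(\cH^{\circ}_{p}):T=T^{*}\}$ --- finite-dimensional precisely because $\bP$ is finite (hypothesis (i)) and each $\cH^{\circ}_{p}$ is finite-dimensional (hypothesis (ii)) --- with the trace pairing, and let $\cP\subset\cE$ be the self-dual cone of tuples of positive semidefinite operators. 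Pairing \eqref{muhatcond} against unit vectors $x$ shows $\widehat\mu_{\bDelta_{\overline{\bG}}}(M)<1$ is equivalent to the existence of $\Gamma$ in the interior of $\cP$ with $\sup_{s\in S}\langle\Gamma,s\rangle<0$, where $S:=\{(\langle(Mx)_{p}\rangle_{p}-\langle x_{p}\rangle_{p})_{p\in\bP}:\|x\|=1\}$ is a bounded subset of $\cE$; a standard separation argument (self-duality of $\cP$ to force the separating functional into $\cP$, then a perturbation $\Gamma\mapsto\Gamma+\eta(I_{\cH^{\circ}_{p}})_{p}$ to make it strictly positive, using boundedness of $S$) reduces the goal to proving $\overline{\mathrm{conv}}(S)\cap\cP=\emptyset$.

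\emph{Step 4 (de-convexification --- the crux).} Suppose for contradiction $\overline{\mathrm{conv}}(S)\cap\cP\ne\emptyset$. By Carath\'eodory's theorem in $\cE$ there are, for each $n$, at most $\dim\cE+1$ unit vectors $x^{(i,n)}\in\cH_{\bS}$ and weights $t_{i}^{(n)}\ge0$ summing to $1$ with
\[
\sum_{i}t_{i}^{(n)}\langle(Mx^{(i,n)})_{p}\rangle_{p}\ \succeq\ \sum_{i}t_{i}^{(n)}\langle x^{(i,n)}_{p}\rangle_{p}-\varepsilon_{n}I\qquad(p\in\bP,\ \varepsilon_{n}\to0).
\]
Assemble the $x^{(i,n)}$ into a \emph{single} unit vector $\widehat x^{(n)}\in\cH_{\bS}$: when $M=I_{\ell^2}\otimes M^{\circ}$, fix isometries $J_{1},\dots,J_{\dim\cE+1}$ on $\ell^2$ with pairwise orthogonal ranges and put $\widehat x^{(n)}=\sum_{i}\sqrt{t_{i}^{(n)}}\,(J_{i}\otimes I)x^{(i,n)}$; since $M$ commutes with each $J_{i}\otimes I$ and $\langle\cdot\rangle_{p}$ is additive over mutually orthogonal supports, $\|\widehat x^{(n)}\|=1$, $\langle\widehat x^{(n)}_{p}\rangle_{p}=\sum_{i}t_{i}^{(n)}\langle x^{(i,n)}_{p}\rangle_{p}$ and $\langle(M\widehat x^{(n)})_{p}\rangle_{p}=\sum_{i}t_{i}^{(n)}\langle(Mx^{(i,n)})_{p}\rangle_{p}$; for a general shift-invariant $M$ one instead supports shifted copies of the $x^{(i,n)}$ on far-separated finite time-windows and truncates each $Mx^{(i,n)}$ to a finite window, exactly as in the proof of Lemma \ref{L:DP}, getting the same identities up to a further $o(1)$. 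Either way $\langle(M\widehat x^{(n)})_{p}\rangle_{p}\succeq\langle\widehat x^{(n)}_{p}\rangle_{p}-\varepsilon_{n}'I$ with $\varepsilon_{n}'\to0$; enlarging each $(M\widehat x^{(n)})_{p}$ by an orthogonal ``full-rank'' vector of norm $O(\sqrt{\varepsilon_{n}'})$ (legitimate since $\ell^2\otimes\widetilde{\cH}_{\bR,p}$ is infinite-dimensional) turns this into an exact inequality $\langle\widehat x^{(n)}_{p}\rangle_{p}\preceq\langle u^{(n)}_{p}\rangle_{p}$ for a vector $u^{(n)}=M\widehat x^{(n)}+\eta_{n}$ with $\|\eta_{n}\|=O(\sqrt{\varepsilon_{n}'})$, so Proposition \ref{P:Douglas} produces $\De^{(n)}\in\bDelta_{\overline{\bG}}$ with $\|\De^{(n)}\|\le1$ and $\De^{(n)}u^{(n)}=\widehat x^{(n)}$. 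Then $\|(I-\De^{(n)}M)\widehat x^{(n)}\|=\|\De^{(n)}\eta_{n}\|\le\|\eta_{n}\|\to0$ while $\|\widehat x^{(n)}\|=1$, contradicting Step 1; hence $\overline{\mathrm{conv}}(S)\cap\cP=\emptyset$ and the theorem follows.

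I expect Step 4 to be the main obstacle: passing from a convex combination of finitely many ``Gram'' data back to genuine Hilbert-space vectors together with a \emph{single} admissible uncertainty of norm $\le1$. For $M=I_{\ell^2}\otimes M^{\circ}$ the orthogonal-copies device does this at once, but for a general shift-invariant $M$ one is forced into the time-localization and finite-window truncation of Lemma \ref{L:DP} and into controlling the resulting errors in \emph{operator} norm on the finite-dimensional coefficient spaces $\cH^{\circ}_{p}$; this, together with the Gram/operator-norm bookkeeping implicit in Steps 2--3, is precisely where the finite-dimensionality hypotheses (i)--(ii) are used beyond the dimension count of Step 3.
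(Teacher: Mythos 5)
Your proposal is correct and follows the same broad Dullerud--Paganini convexity strategy as the paper, but it refactors the argument in two genuinely different places worth noting. First, you invoke Lemma \ref{L:DP} as a black box to obtain a uniform lower bound $\|(I-\De M)x\|\ge \beta\|x\|$ over all contractive structured $\De$; the paper's own proof of Theorem \ref{T:I'} does not cite Lemma \ref{L:DP} but instead re-runs a closely related time-localization argument in its ``Step 1,'' deriving the quantitative separation $\operatorname{dist}(\nabla,\Pi)>0$ directly at the Gram level. Second, and more substantially, the paper factors the conclusion $\overline{\operatorname{co}}\,\nabla\cap\Pi=\emptyset$ into two independent lemmas---Step 1 ($\overline{\nabla}\cap\Pi=\emptyset$) and Step 2 (the Megretski--Treil argument that $\overline{\operatorname{co}}\,\nabla=\overline{\nabla}$, using a two-vector shift construction $h_n=\sqrt{\alpha}h+\sqrt{1-\alpha}V_\bS^n\widetilde h$)---whereas you fuse these into a single contradiction: invoke Carath\'eodory in $\cE$ to reduce any point of $\overline{\operatorname{co}}\,S$ to a uniformly bounded finite convex combination, assemble the finitely many unit vectors into one via orthogonal-range isometries (or far-separated time windows for general shift-invariant $M$), then apply the structured Douglas lemma to obtain an admissible $\De^{(n)}$ violating your Step 1 bound. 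Your one-shot approach is arguably more direct and makes the role of the uniform operator-norm bound explicit; the paper's two-lemma decomposition isolates the convexity fact (which is of independent interest and is exactly the Megretski--Treil power-distribution observation) from the quantitative distance bound. One bookkeeping point you glide over: when you add the ``orthogonal full-rank'' perturbation $\eta_n$, what you need is not merely $\eta_n\perp M\widehat{x}^{(n)}$ but that the ranges of the associated Hilbert--Schmidt operators $U[\cdot]$ be orthogonal (equivalently, that the $\ell^2\otimes\widetilde{\cH}_{\bR,p}$-supports be orthogonal), so that the Gram operators add; this holds because $U[(M\widehat{x}^{(n)})_p]$ has rank at most $\dim\cH^\circ_p<\infty$ and $\ell^2\otimes\widetilde{\cH}_{\bR,p}$ is infinite-dimensional, but the distinction should be stated. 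With that caveat the proposal is sound.
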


\begin{proof}
Following the argumentation in Remark \ref{R:reduct}, a scaling argument gives that the equality
    $\mu_{\bDelta_{\overline{\bG}}}(M) = \widehat
    \mu_{\bDelta_{\overline{\bG}}}(M)$ is equivalent to:
$\mu_{\bDelta_{\overline{\bG}}}(M) < 1$ $\Leftrightarrow$ $\widehat
\mu_{\bDelta_{\overline{\bG}}}(M) < 1$.  This latter statement in
turn is equivalent to the equivalence of the two statements (1) and
(2) in the statement of the theorem. Thus it suffices to show the
equivalence of (1) and (2).  If (2) holds, then $M$ is
$\overline{\bG}$-structured-similar to a strict contraction $M'$ from
which (1) follows.  We conclude that it suffices to show that (1)
$\Rightarrow$ (2).

Toward this goal, we assume that we are given $M$ for which (1)
holds.  Let us use the short-hand notation
\begin{equation}   \label{shorthand}
U_{\bR,p} = U_{\ell^{2}\otimes \widetilde \cH_{\bR,p}, \cH^{\circ}_{p}}, \quad
U_{\bS,p} = U_{\ell^{2}\otimes \widetilde \cH_{\bS,p}, \cH^{\circ}_{p}}
\end{equation}
for the identification maps between tensor product spaces and
Hilbert-Schmidt operators given in Proposition \ref{P:tensor}.
We introduce maps $\phi_{p} \colon \cH_{\bS} \to
\cC_{1}(\cH^{\circ}_{p})$ by
\begin{equation} \label{phip}
\phi_{p} \colon  h \mapsto
\left(U_{\bR,p}[P_{\cH_{\bR,p}} Mh] \right)^{*}
U_{\bR,p}[P_{\cH_{\bR,p}} Mh ]
 -\left( U_{\bS,p}[P_{\cH_{\bS,p}} h ]\right)^{*}
U_{\bS,p}[P_{\cH_{\bS,p}} h ]
\end{equation}
In addition introduce sets of operator tuples
\begin{align}
\nabla &=\{(\phi_p(h))_{p \in \bP}\,\colon\, h\in\cH_S,\,
\|h\|=1\},\label{nabla}\\
\Pi &=\{(L_p)_{p \in \bP}\, \colon\, L_p\in \cC_{1}(\cH_p^\circ),\, L_p
\succeq 0,\,   p\in \bP \}.\label{Pi}
\end{align}

The connection between the quadratic forms $\phi_{p}$ and the condition
$\widehat \mu(M) < 1$ (condition (1) in Theorem \ref{T:I'}) is as follows.

\begin{lemma}  \label{L:1}
    Assume that $\mu_{\bDelta_{\overline{\bG}}}(M) < 1$ (i.e., condition (1) in Theorem
    \ref{T:I'} is satisfied).  Then
 \begin{equation}  \label{NablaPi}
     \nabla \cap \Pi = \emptyset,
 \end{equation}
    i.e.,  there cannot exist a nonzero $h \in \cH_{\bS}$ such that
    $\phi_{p}(h) \succeq 0$ for each $p \in \bP$.
\end{lemma}

\begin{proof}  First note that each $\phi_{p}$ is homogeneous of
    degree 2: $\phi_{p}(\alpha h) = |\alpha|^{2} \phi_{p}(h)$ for
    $\alpha \in {\mathbb C}$.  Thus the existence of a nonzero $h \in
    \cH_{\bS}$ with $\phi_{p}(h) \succeq 0$ for all $p$ implies that the
    normalization $\widetilde h = \|h\|^{-1}h$ of $h$ is a unit vector which
    satisfies $\phi_{p} (\widetilde h) \succeq  0$ for all $p$.
    Thus the existence of a nonzero $h \in \cH_{\bS}$ with
    $\phi_{p}(h) \succeq 0$ for all $p$ is equivalent to $\nabla \cap
    \Pi$ being nonempty.

    To prove the lemma we proceed by contradiction.
     Suppose that there is a
    nonzero $h \in \cH_{\bS}$ such that $\phi_{p}(h) \succeq 0$ for
    all $p$.  By Proposition \ref{P:Douglas} we can find a contraction
    $W_{p} \in \cL(\ell^{2} \otimes \widetilde
    \cH_{\bR, p}, \ell^{2} \otimes \widetilde \cH_{\bS,p})$ so that
$$
  W_{p} \otimes I_{\cH^{\circ}_{p}} \colon P_{\cH_{\bR,p}} M h \to
  P_{\cH_{\bS,p}} h.
$$
Then $\De = \bigoplus_{p \in \bP} \left( W_{p} \otimes
I_{\cH^{\circ}_{p}} \right)$ is in $\overline{\cB}
\bDelta_{\overline{\bG}}$ and $h$ is in the kernel of $(I - \De M)$.
It follows that $I - \De M$ is not invertible, i.e., condition (1) in
Theorem \ref{T:I'} is violated.
\end{proof}

The connection of the quadratic forms $\phi_{p}$ with the condition
$\widehat \mu_{\bDelta_{\bG}}(M) < 1$ (condition (2)
in Theorem \ref{T:I'}) is as follows.

\begin{lemma}  \label{L:2}
   The condition
    $\widehat \mu_{\bDelta_{\bG}}(M) < 1$ holds, i.e.,  for each $p \in
    \bP$ there exists $\Gamma_{p}^{\circ} \succ 0$ on
    $\cH^{\circ}_{p}$ so that \eqref{muhatcond} holds, if and only if
    either of the following two equivalent conditions holds:
    \begin{enumerate}
    \item
    There exists $\epsilon > 0$ and strictly positive definite
    operators $\Gamma^{\circ}_{p} \succ 0$ on $\cH^{\circ}_{p}$ for
    each $p \in \bP$ so that
    \begin{equation}   \label{L2formula}  \sum_{p \in \bP}  {\rm tr} \left(
    \Gamma^{\circ}_{p} \, \phi_{p}(h) \right) \le - \epsilon
    \|h\|^{2}\qquad (h \in \cH_{\bS}).
    \end{equation}

    \item The sets $\nabla$ and $\Pi$ are strictly separated in the
    following sense:  there exists operators
    $\Gamma^{\circ}_{p}$ on $\cH^{\circ}_{p}$ for $p \in \bP$
    and real numbers $\alpha < \beta$ so that
    \begin{equation}  \label{strictsep}
\sum_{p \in \bP} {\rm Re}\, {\rm tr}(\Gamma^{\circ}_{p} \, K_{p}) \le \alpha <
\beta \le \sum_{p \in \bP} {\rm Re}\,{\rm tr}(\Gamma^{\circ}_{p}\, L_{p})
\ ((K_{p})_{p \in \bP} \in \nabla ,\, (L_{p})_{p \in \bP}
    \in \Pi).
    \end{equation}
 Furthermore, whenever this is the case, it can be
    arranged that $\beta = 0$ and $\Gamma^{\circ}_{p} \succ 0$ and
    then \eqref{strictsep} can be written without the real-part
    qualifier:
     \begin{equation}  \label{strictsep'}
\sum_{p \in \bP}  {\rm tr}(\Gamma^{\circ}_{p} \, K_{p}) \le \alpha <
0 \le \sum_{p \in \bP} {\rm tr}(\Gamma^{\circ}_{p}\, L_{p} )
\quad
((K_{p})_{p \in \bP} \in \nabla,\, (L_{p})_{p \in \bP}
    \in \Pi).
    \end{equation}
 \end{enumerate}
\end{lemma}

\begin{proof}
    Rewrite \eqref{muhatcond} as a quadratic form condition:
\begin{equation}   \label{rewrite}
\left\langle \left[ M^{*} \left( \bigoplus_{p \in \bP} I_{\ell^{2}
\otimes \widetilde \cH_{\bR,p}} \otimes \Gamma^{\circ}_{p} \right) M -
\left( \bigoplus_{p \in \bP} I_{\ell^{2} \otimes \widetilde
\cH_{\bS,p}} \otimes \Gamma_{p}^{\circ} \right) \right] h, h
\right\rangle \le -\epsilon \|h\|^{2}.
\end{equation}
The left-hand side of this inequality can be rewritten as a
difference of sums:
\begin{align*}
    & \left\langle \left[ M^{*} \left( \bigoplus_{p \in \bP} I_{\ell^{2}
\otimes \widetilde \cH_{\bR,p}} \otimes \Gamma^{\circ}_{p} \right) M -
\left( \bigoplus_{p \in \bP} I_{\ell^{2} \otimes \widetilde
\cH_{\bS,p}} \otimes \Gamma_{p}^{\circ} \right) \right] h, h
\right\rangle \\
& \quad = \sum_{p \in \bP} \left\langle \left( I_{\ell^{2} \otimes
\widetilde \cH_{\bR,p}} \otimes \Gamma^{\circ}_{p} \right)
P_{\cH_{\bR,p}} M h, P_{\cH_{\bR,p}}M h \right\rangle_{\cH_{\bR,p}} \\
& \quad \quad \quad \quad
- \sum_{p \in \bP} \left\langle \left( I_{\ell^{2} \otimes \widetilde
\cH_{\bS,p}} \otimes \Gamma^{\circ}_{p} \right) P_{\cH_{\bS,p}}h,
P_{\cH_{\bS,p}} h \right\rangle_{\cH_{\bS,p}}
\end{align*}
Now note that
\begin{align*}
  & \left\langle \left( I_{\ell^{2} \otimes \widetilde \cH_{\bR,p}}
    \otimes \Gamma^{\circ}_{p} \right) P_{\cH_{\bR,p}} M h,
    P_{\cH_{\bR,p}} M h \right\rangle  \\
 & \quad =  \left\langle U_{\ell^{2} \otimes \cH_{\bR, p}, \cH^{\circ}_{p}}\left[
 \left(I_{\ell^{2} \otimes \widetilde \cH_{\bR,p}} \otimes
 \Gamma^{\circ}_{p} \right) P_{\cH_{\bR,p}} M h \right], U_{\ell^{2}
 \otimes \widetilde \cH_{\bR,p}, \cH^{\circ}_{p}}[ P_{\cH_{\bR,p}} M
 h]  \right\rangle  \\
 & \quad = \left\langle U_{\ell^{2} \otimes \cH_{\bR, p}, \cH^{\circ}_{p}}\left[
 P_{\cH_{\bR,p}} M h \right]
 (\Gamma^{\circ}_{p})^T, U_{\ell^{2}
 \otimes \widetilde \cH_{\bR,p}, \cH^{\circ}_{p}}[ P_{\cH_{\bR,p}} M
 h ] \right\rangle  \text{ (by property \eqref{intertwine})}  \\
 & \quad =  {\rm tr} \left( (\Gamma^{\circ}_{p})^T  \,
 U_{\ell^{2} \otimes \cH_{\bR, p}, \cH^{\circ}_{p}}\left[
 P_{\cH_{\bR,p}} M h \right]^{*} U_{\ell^{2} \otimes \cH_{\bR, p}, \cH^{\circ}_{p}}\left[
 P_{\cH_{\bR,p}} M h \right] \right)
 \end{align*}
 A similar calculation gives that
 \begin{align*}
  &   \left\langle \left( I_{\ell^{2} \otimes \widetilde
\cH_{\bS,p}} \otimes \Gamma^{\circ}_{p} \right) P_{\cH_{\bS,p}}h,
P_{\cH_{\bS,p}} h \right\rangle_{\cH_{\bS,p}} \\
& \qquad\qquad =
{\rm tr} \left( (\Gamma^{\circ}_{p})^T\,
U_{\ell^{2} \otimes \cH_{\bS,p}, \cH^{\circ}_{p}}[ P_{\cH_{\bS,p}} h]^{*}
U_{\ell^{2} \otimes \cH_{\bS,p}, \cH^{\circ}_{p}}[ P_{\cH_{\bS,p}} h]
\right)
\end{align*}
Putting the pieces together, we see that the condition
\eqref{rewrite} collapses to \eqref{L2formula} (with
$(\Gamma^{\circ}_{p})^T$ in place of $\Gamma^{\circ}_{p}$).
Since the conjugation operator preserves strict positive-definiteness
and is involutive, having $(\Gamma^{\circ}_{p})^T$ in the
formula rather than $\Gamma^{\circ}_{p}$ does not affect the result.
Conversely, by reversing the steps in the argument, one can derive
\eqref{muhatcond} from \eqref{L2formula}.  This completes the proof
of the equivalence of \eqref{muhatcond} and \eqref{L2formula}.

It remains to argue the equivalence of conditions (1) and (2) in
Lemma \ref{L:2}. Assume that condition (1) holds, i.e., that there are positive definite
operators
$\Gamma^{\circ}_{p}$ on $\cH^{\circ}_{p}$ for each $p \in \bP$ so
that \eqref{L2formula} holds.  Each $(K_{p})_{p \in \bP}$ in $\nabla$
has the form $K_{p} = \phi_{p}(h)$ for an $h \in \cH_{\bS}$ with $\|
h \|_{\cH_{\bS}} = 1$.  Using this connection between $(K_{p})_{p \in
\bP}$ in $\nabla$ and $h$ in formula \eqref{L2formula} gives
$$
\sum_{p \in \bP} {\rm tr}(\Gamma^{\circ}_{p} \, K_{p}) =
\sum_{p \in \bP} {\rm tr}( \Gamma^{\circ}_{p} \, \phi_{p}(h)) \le
-\epsilon =: \alpha < 0.
$$
Furthermore, for $\Gamma^{\circ}_{p} \succ 0$ and $L_{p} \succeq 0$,
it is automatic that ${\rm tr}(\Gamma^{\circ}_{p} \, L_{p}) \ge 0$
for each $p$, and hence \eqref{strictsep} follows with $\be=0$ (and all $\Ga_p^\circ \succ0$).

Conversely, suppose that there are operators
$\Gamma^{\circ}_{p}$ on $\cH^{\circ}_{p}$ and numbers $\alpha < \beta$ so that
\eqref{strictsep} holds. As in general ${\rm tr} (X^{*}) =
\overline{{\rm tr}(X)}$ and all components of elements of $\nabla$ and of $\Pi$
are selfadjoint, we see that $((\Gamma^{\circ}_{p})^{*})_{p \in \bP}$
satisfies \eqref{strictsep} whenever $(\Gamma^{\circ}_{p})_{p \in
\bP}$ does.  By the convexity of the conditions in \eqref{strictsep},
we may replace each $\Gamma^{\circ}_{p}$ by ${\rm Re}
\Gamma^{\circ}_{p} = \frac{1}{2}(\Gamma^{\circ}_{p} +
(\Gamma^{\circ}_{p})^{*})$ and still have a solution of
\eqref{strictsep}.  Once this is done, then the presence of the
real-part symbol in the formula is redundant and may be removed.
At this stage we know:  each $\Gamma^{\circ}_{p}$ is selfadjoint and
\begin{equation}   \label{strictsep''}
\sum_{p \in \bP}  {\rm tr}(\Gamma^{\circ}_{p} \, K_{p}) \le \alpha <
\beta \le \sum_{p \in \bP} {\rm tr}(\Gamma^{\circ}_{p}\, L_{p} )
\text{ for } (K_{p})_{p \in \bP} \in \nabla \text{ and } (L_{p})_{p \in \bP}
    \in \Pi.
    \end{equation}

A particular consequence of \eqref{strictsep''} is that
$$
\beta \le \sum_{p \in \bP} {\rm tr}(\Gamma^{\circ}_{p}\, L_{p} )
\quad\text{ for }\quad L_{p} \succeq 0.
$$
Fix a $p_{o} \in P$ and apply this condition to the particular case
where $L_{p}= 0$ for all $p \ne p_{0}$.  Then we see that ${\rm
tr}(\Gamma^{\circ}_{p_{0}}\, L_{p_{0}}) \ge 0$ for all $L_{p_{0}} \succeq
0$ on $\cH^{\circ}_{p_{0}}$.  Apply this condition to the particular
case where $L_{p_{0}} = v v^{*}$ for a unit vector $v \in
\cH^{\circ}_{p}$.  If it were not the case that $\langle
\Gamma^{\circ}_{p_{0}}v, v \rangle = {\rm
tr}(\Gamma^{\circ}_{p_{0}}\, L_{p_{0}}) \ge 0$, then we could rescale
$v$ to make ${\rm tr}(\Gamma^{\circ}_{p_{0}}\, L_{p_{0}})$ tend
as close as we like to $-\infty$, in particular, to achieve a value
strictly less than $\beta$ in violation of condition
\eqref{strictsep''}.  We conclude that $\Gamma^{\circ}_{p_{0}} \succeq 0$
for each $p_{0} \in \bP$ and that there is no loss of generality in
taking $\beta = 0$ and then $\alpha < 0$.

It remains to see that $\Gamma^{\circ}_{p} \succ 0$ for each $p \in \bP$.
Toward this end, note that $\nabla$ is a bounded subset of
$(\cC_{1}(\cH^{\circ}_{p}))_{p \in \bP}$ and $\alpha < 0$.  Hence we
may perturb each $\Gamma^{\circ}_{p}$ to $\Gamma^{\circ}_{p} + \delta
I_{\cH^{\circ}_{p}}$ for some number $\delta > 0$ sufficiently small
while maintaining ${\rm tr} ( \Gamma^{\circ}_{p} \, K_{p} ) \le
\alpha' = \alpha/2 < 0$.  With these adjustments, we arrive at the
existence of adjusted $\Gamma^{\circ}_{p} \succ 0$ and adjusted
$\alpha < 0$ so that \eqref{strictsep'} holds.

Once we have the validity of \eqref{strictsep'}, it is a simple
matter to make the substitution $K_{p} = \phi_{p}(h)$ with $h$ equal
to a unit vector in $\cH_{\bS}$ to arrive at \eqref{L2formula} for
the case where $h$ is a unit vector.  As both sides of
\eqref{L2formula} are quadratic in rescalings of $h$, the general
case of \eqref{L2formula} now follows as well.
\end{proof}

The results of Lemma \ref{L:1} and \ref{L:2} show the apparent
gap between the conditions $\mu_{\bDelta_{\overline{\bG}}}(M) < 1$
($\nabla \cap \Pi = \emptyset$) and $\widehat
\mu_{\bDelta_{\overline{\bG}}}(M) < 1$ ($\nabla$ and $\Pi$ strictly
separated).  In fact, the strict separation condition
\eqref{strictsep} says much more, namely:
\begin{equation}
    \overline{\rm co}\, \nabla \cap \Pi = \emptyset
\end{equation}
where $\overline{\rm co} \, \nabla$ is the closed convex hull of the set
$\nabla$.  This suggests some elementary convexity analysis.  We view
$\cX: = (\cC_{1}(\cH^{\circ}_{p}))_{p \in \bP}$ as a linear topological vector
space (a bit of an overblown statement since we are assuming that it
is finite dimensional) with dual space viewed as operator tuples
$\cX^{*} : =(\cL(\cH^{\circ}_{p}))_{p \in \bP}$ with duality pairing given via the
trace:
$$ \left\langle (\Gamma^{\circ}_{p})_{p \in \bP}, (T_{p})_{p \in \bP}
\right\rangle = \sum_{p \in \bP} {\rm tr}(\Gamma^{\circ}_{p}\, T_{p} )
$$
where
$$
(\Gamma^{\circ}_{p})_{p \in \bP} \in (\cL(\cH^{\circ}_{p}))_{p \in
\bP}, \quad
(T_{p})_{p \in \bP} \in (\cC_{1}(\cH^{\circ}_{p}))_{p \in
\bP}.
$$
Note that $\Pi$ and $\overline{\rm co} \, \nabla$ are  closed convex sets in $\cX$
and furthermore, as $\overline{\rm co} \, \nabla$ is closed and bounded
in the finite-dimensional Banach space $\cX$, $\overline{\rm co} \,
\nabla$ is also compact.  We may therefore apply a Hahn-Banach
separation theorem (see Theorem 3.4 part (b) in \cite{Rudin}) to
conclude: {\em if $\overline{\rm co} \, \nabla \cap \Pi = \emptyset$, then
$\overline{\rm co} \, \nabla$ and $\Pi$ (and hence also $\nabla$ and $\Pi$)
are strictly separated.}  It then follows as a consequence of Lemma
\ref{L:2} that $\widehat \mu_{\bDelta_{\overline{\bG}}}(M) < 1$.
Hence to complete the proof of Theorem \ref{T:I'}, it remains only to
show:
\begin{equation}   \label{toshow}
   \mu_{\bDelta_{\overline{\bG}}}(M) < 1\quad \Rightarrow \quad \overline{\rm
    co} \, \nabla \cap \Pi = \emptyset.
\end{equation}
The verification of the implication \eqref{toshow} proceeds in two
steps:
\begin{enumerate}
    \item[\textbf{Step 1:}]  If $\mu_{\bDelta_{\overline{\bG}}}(M) <
    1$, then the necessary condition \eqref{NablaPi} holds in the
    stronger form
    \begin{equation}  \label{distNablaPi}
	\overline{\nabla} \cap \Pi = \emptyset.
\end{equation}

    \smallskip

    \item[\textbf{Step 2:}]  If $\mu_{\bDelta_{\overline{\bG}}}(M) <
    1$, then the closure $\overline{\nabla}$ of $\nabla$ is convex, i.e.,
    \begin{equation}  \label{closedconvex}
	\overline{\rm co} \, \nabla =  \overline{\nabla}.
\end{equation}
\end{enumerate}

\paragraph{\bf Verification of Step 1:}
The argument is modeled on the proof of Lemma B.1 in \cite{DP}
inspired in turn by the earlier work of Shamma \cite{Shamma}; the
reader will see that the argument also has some elements in common with the proof of Lemma
\ref{L:DP} above.

To streamline the proof, let us use the short-hand notation
\eqref{shorthand}.  Recall that $V$ denotes the shift operator on
$\ell^{2}$;
 let us introduce additional short-hand notation for the higher-multiplicity
 shift operators
 \begin{align*}
  & \widetilde V_{\bR}: = V \otimes I_{\widetilde \cH_{\bR,p}}
   \text{ on } \ell^{2} \otimes \widetilde \cH_{\bR,p}, \quad
    \widetilde V_{\bS}: = V \otimes I_{\widetilde \cH_{\bS,p}}
   \text{ on } \ell^{2} \otimes \widetilde \cH_{\bS,p},  \\
 &   V_{\bR}: = V \otimes I_{ \cH^{\circ}_{\bR,p}}
   \text{ on } \ell^{2} \otimes  \cH^{\circ}_{\bR,p}, \quad
     V_{\bS}: = V \otimes I_{ \cH^{\circ}_{\bS,p}}
   \text{ on } \ell^{2} \otimes  \cH^{\circ}_{\bS,p}
   \end{align*}
 where we recall that $\cH^{\circ}_{\bR,p} = \widetilde \cH_{\bR,p}
 \otimes \cH^{\circ}_{p}$ and $\cH^{\circ}_{\bS,p} = \widetilde
 \cH_{\bS,p} \otimes \cH^{\circ}_{p}$.  For brevity we use the same notation for
 the case where $\cH_{\bS,p}$ is replaced by $\cH_{\bS}$ or
 $\cH_{\bR,p}$ is replaced by $\cH_{\bR}$; the meaning will be clear
 from the context.
 Then we have the identities
 \begin{equation}   \label{shiftintertwine}
 \widetilde V_{\bR} U_{\bR,p}[h_{\bR}] = U_{\bR,p}[V_{\bR,p}
 h_{\bR}], \quad
 \widetilde V_{\bS} U_{\bS,p}[h_{\bS}] = U_{\bS,p}[V_{\bS}
 h_{\bS}]
 \end{equation}
 for $h_{\bR} \in \cH_{\bR,p}$ and $h_{\bS} \in \cH_{\bS, p}$ as a
 consequence of property \eqref{intertwine} in Proposition
 \ref{P:tensor2}.

Note that the condition \eqref{distNablaPi} can otherwise be formulated as
$\operatorname{dist} (\nabla \cap \Pi) > 0$ where the distance can be
measured via any convenient norm on the trace-class operator-tuples
$(\cC_{1}(\cH^{\circ}_{p}))_{p \in \bP}$; note that as part of our
assumptions is that $\dim \cH^{\circ}_{p} < \infty$ for each $p$, all
the norms on $\cH^{\circ}_{p}$ are equivalent. For convenience we work with
the operator norm.

We proceed by
contradiction.  Suppose that ${\rm dist}(\nabla, \Pi) = 0$.  The idea
is to construct a $\De \in \overline{\cB}\overline{\bDelta}_{\overline{\bG}}$ so that
$I - \De M$ is not (boundedly) invertible. Let $\{\epsilon_{n}\}_{n \in
{\mathbb Z}_{+}}$ be a sequence of positive real numbers with
$\epsilon_n \to 0$ as $n \to \infty$.  Since ${\rm dist}(\nabla, \Pi) =
0$, we can find a unit vector $q^{(n)} \in \cH_{\bS}$ and an
operator-tuple $(L^{(n)}_{p})_{p \in \bP} \in \Pi$ so that
$$
  \| \phi_{p}(q^{(n)}) - L^{(n)}_{p} \|_{\cL(\cH^{\circ}_{p})} < \epsilon_{n}^{2}
  \text{ for each } p \in \bP.
$$
Since $\phi_{p}(q^{(n)})$ and $L^{(n)}_{p}$ are all selfadjoint, this
norm inequality implies the quadratic form inequality
$$
  -\epsilon_{n}^{2} I_{\cH^{\circ}_{p}} \prec \phi_{p}(q^{(n)}) -
  L^{(n)}_{p} \preceq \epsilon_{n}^{2} I_{\cH^{\circ}_{p}}.
$$
In particular we have
$$
\phi_{p}(q^{(n)}) \succ L_{p}^{(n)} - \epsilon_{n}^{2}
I_{\cH_{p}^{\circ}} \succeq -\epsilon_{n}^{2} I_{\cH^{\circ}_{p}}.
$$
Spelling this condition out gives
\begin{align}
 &  \left( U_{\bR,p}[P_{\cH_{\bR,p}}M  q^{(n)}] \right)^{*}
    U_{\bR, p}[P_{\cH_{\bR,p}}M  q^{(n)}]
-\left( U_{\bS, p} [P_{\cH_{\bS, p}} q^{(n)}] \right)^{*}
 U_{\bS,p}[P_{\cH_{\bS,p}}q^{(n)}]  \notag \\
 & \qquad\qquad\qquad \succ - \epsilon_{n}^{2} I_{\cH^{\circ}_{p}} \text{ for
 all } p \in \bP.
 \label{ineq1}
 \end{align}

 Let now $n_{0}$ be an arbitrary nonnegative integer.  Note that
 $P_{\cH_{\bR,p}}$ commutes with $V_{\bR,p}$.  Hence, using the
 property \eqref{shiftintertwine} and the assumed shift-invariance of
 $M$, we see that
 $$
 (\widetilde V_{\bR,p})^{n_{0}} U_{\bR,p}[P_{\cH_{\bR,p}} M q^{(n)}] =
 U_{\bR,p}[P_{\cH_{\bR,p}} M (V_{\bS,p})^{n_{0}}q^{(n)}]
 $$
 and similarly
 $$
 (\widetilde V_{\bS,p})^{n_{0}} U_{\bS,p}[P_{\cH_{\bS,p}} q^{(n)}] =
  U_{\bR,p}[P_{\cH_{\bR,p}} (V_{\bS,p})^{n_{0}}q^{(n)}].
 $$
 It follows that \eqref{ineq1} continues to hold with
 $(V_{\bS,p})^{n_{0}}q^{(n)}$ in place of $q^{(n)}$.  Hence we may
 assume without loss of generality that \eqref{ineq1} holds with
 $q^{(n)}$ a unit vector having support in $[n_{0}, \infty)$.

 As was done in the proof of Lemma \ref{L:DP},  let us write
 $P_{[n_{0},N]}$ for the projection of $\ell^{2} \otimes \cX$ onto
 $\ell^{2}[n_{0}, N] \otimes \cX$, where the coefficient space
 $\cX$ is either $\cH^{\circ}_{\bS,p} = \widetilde \cH_{\bS,p}
 \otimes \cH^{\circ}_{p}$ or $\cH^{\circ}_{\bR,p} = \widetilde
 \cH_{\bR,p} \otimes \cH^{\circ}_{p}$;  in case $n_{0} = 0$, we write
 simply $P_{N}$ rather than $P_{[0,N]}$. In case the coefficient
 space is either $\widetilde \cH_{\bS,p}$ or $\widetilde
 \cH_{\bR,p}$, we write $\widetilde P_{[n_0,N]}$ and $\widetilde P_{N}$
 respectively.

 The key property of the
 sequence $\{P_{N}\}_{N \in {\mathbb Z}_{+}}$ is its strong
 convergence to the identity operator as $N \to \infty$.
 In particular $P_{N} q^{(n)} \to q^{(n)}$ in $\cH_{\bS}$-norm as $N
 \to \infty$.  It follows that $U_{\bS,p}[P_{\cH_{S,p}} P_{N}
 q^{(n)}] \to U_{\bS,p}[P_{\cH_{\bS,p}} q^{(n)}]$ in
 Hilbert-Schmidt norm as $N \to \infty$.  But since $\dim
 \cH^{\circ}_{p} < \infty$, this is enough to conclude that actually
 $U_{\bS,p}[P_{\cH_{S,p}} P_{N} q^{(n)}]$ converges to $U_{\bS,p}[P_{\cH_{\bS,p}}
  q^{(n)}]$
 in operator norm as $N \to \infty$. Similarly
 $U_{\bR,p}[P_{\cH_{\bR,p}}M P_{N} q^{(n)}]$ converges to
 $U_{\bR,p}[P_{\cH_{\bR,p}}M  q^{(n)}]$ in operator norm as $N
 \to \infty$.  We may thus arrange that \eqref{ineq1} holds with
 $P_{N} q^{(n)}$ in place of $q^{(n)}$, in other words, for a fixed $n_0$ we may assume without loss of generality that $q^{(n)}$ has support
 in $[n_{0}, N]$ for some sufficiently large integer $N>n_0$.

 From the estimate
 \begin{align*}
     & \| P_{N} P_{\cH_{\bR,p}} M P_{N} q^{(n)} - P_{\cH_{\bR, p}} M
     q^{(n)} \|  \\
     & \qquad\qquad \le \| P_{N} P_{\cH_{\bR,p}} M (P_{N} - I) q^{(n)} \|
     + \| (P_{N} - I) P_{\cH_{\bR,p}} M q^{(n)} \|
 \end{align*}
 coupled with the strong convergence of $P_{N}$ to the identity operator as $N
 \to \infty$,  we see that $P_{N} P_{\cH_{\bR,p}} M P_{N} q^{(n)}$
 converges to  $P_{\cH_{\bR, p}} M q^{(n)}$ in $\cH_{\bR,p}$-norm.
 It follows that $U_{\bR,p}[P_{N} P_{\cH_{\bR,p}} M P_{N} q^{(n)}]$
 converges to $U_{\bR,p}[P_{\cH_{\bR, p}} M q^{(n)}]$ in operator
 norm.  Hence be taking $N$ sufficiently large, \eqref{ineq1} can be
 adjusted to have the form
 \begin{align}
    & \left( U_{\bR,p}[P_{[n_{0},N]} P_{\cH_{\bR,p}} M P_{[n_{0},N]}
    q^{(n)}]\right)^{*}
  U_{\bR,p}[P_{[n_{0},N]} P_{\cH_{\bR,p}} M P_{[n_{0},N]} q^{(n)}]  \notag \\
 & \qquad\qquad   -\left( U_{\bS,p}[P_{\cH_{\bS,p}} P_{[n_{0},N]} q^{(n)}]\right)^{*}
   U_{\bS,p}[P_{\cH_{\bS,p}} P_{[n_{0},N]} q^{(n)}] \succ -
   \epsilon_{n}^{2} I_{\cH^{\circ}_{p}}.
   \label{ineq2}
 \end{align}
 Furthermore, since $P_{[n_{0},N]}M P_{[n_{0},N]}=P_N M P_{[n_{0},N]}$, by the shift invariance of $M$, we have
\begin{align*}
    & \|(I - P_{[n_{0},N]}) M P_{[n_{0},N]} q^{(n)} \|=\|(I - P_N) M P_{[n_{0},N]} q^{(n)} \|
 \end{align*}
 combined with the strong convergence of $P_{N}$ to the identity operator as $N \to
 \infty$ tells us that we may also arrange that
 \begin{equation}  \label{ineq1'}
     \| (I - P_{[n_{0},N]}) M P_{[n_{0},N]} q^{(n)} \| < \epsilon_{n}
 \end{equation}
 by taking $N$ sufficiently large.  By rescaling and taking $N$ still
 larger if necessary, we can assume without loss of generality that
 $P_{[n_{0},N]} q^{(n)}$ is a unit vector.  By redefining $q^{(n)}$
 to be $P_{[n_0,N]}q^{(n)}$, we arrive at the following normalization:
 {\em for any $n_{0} \in {\mathbb
 Z}_{+}$, by taking $N \in {\mathbb Z}_{+}$ sufficiently large, we
 can find a unit vector $q^{(n)} \in \ell^{2} \otimes
 \cH^{\circ}_{\bS}$ with support in $[n_{0}, N]$ so that
  \begin{align}
    & \left( U_{\bR,p}[P_{[n_{0},N]} P_{\cH_{\bR,p}} M q^{(n)}]\right)^{*}
  U_{\bR,p}[P_{[n_{0},N]} P_{\cH_{\bR,p}} M q^{(n)}]  \notag \\
 & \qquad\qquad   -\left( U_{\bS,p}[P_{\cH_{\bS,p}} q^{(n)}]\right)^{*}
   U_{\bS,p}[P_{\cH_{\bS,p}} q^{(n)}] \succ -
   \epsilon_{n}^{2} I_{\cH^{\circ}_{p}}.
   \label{ineq2'}
 \end{align}
 as well as}
  \begin{equation}  \label{ineq1''}
     \| (I - P_{[n_{0},N]}) M  q^{(n)} \| < \epsilon_{n}.
 \end{equation}

 Let us apply the Douglas lemma to the inequality \eqref{ineq2'} (see
 the discussion immediately preceding Proposition \ref{P:Douglas});
 the result is the existence of a contraction operator
 $$
 \begin{bmatrix} X^{(n)}_{p} & Y^{(n)}_{p} \end{bmatrix} \colon
     \begin{bmatrix}  \ell^{2}([n_0,N]) \otimes \widetilde \cH_{\bR,p}
	 \\ \cH^{\circ}_{p} \end{bmatrix} \to
\ell^{2}([n_{0},N]) \otimes \widetilde \cH_{\bS,p}
$$
so that
$$ X_{p}^{(n)} U_{\bR,p}[ P_{[n_{0},N]} P_{\cH_{\bR,p}} M
P_{[n_{0},N]} q^{(n)}] + \epsilon_{n} Y^{(n)}_{p} =
U_{\bS,p}[P_{\cH_{\bS,p}} P_{[n_{0},N]} q^{(n)}].
$$
Hence, as a consequence of property \eqref{intertwine}, we get
\begin{equation}  \label{Douglas-solve}
U_{\bS,p}\left[ \left( X^{(n)}_{p} \otimes I_{\cH^{\circ}_{p}}
\right) P_{[n_{0},N]} P_{\cH_{\bR,p}} M P_{[n_{0},N]} q^{(n)} -
P_{\cH_{\bS,p}} P_{[n,N]} q^{(n)} \right] = \epsilon_{n} Y^{(n)}_{p}.
\end{equation}

As $\| Y_{p}^{(n)} \| \le 1$, it follows that
$$
{\rm tr}\, (Y_{p}^{(n) *} Y_{p}^{(n)} ) = \sum_{j \in J} \| Y_{p}
e^{(p)}_{j} \|^{2} \le \dim \cH^{\circ}_{p}
$$
where we let $\{e_{j}^{(p)} \colon j \in J\}$ be any orthonormal
basis for $\cH^{\circ}_{p}$.  Hence we see that $Y_{p}^{(n)}$ has
Hilbert-Schmidt norm $\| Y_{p}^{(n)} \|_{\cC_{2}(\cH_{p}^{\circ},
\ell^{2} \otimes \widetilde \cH_{\bS,p})}$ at most $( \dim
\cH^{\circ}_{p})^{1/2}$.  As $U_{\bS,p}$ is unitary from
$\cH_{\bS,p}$ to $\cC_{2}(\cH^{\circ}_{p}, \ell^{2} \otimes
\widetilde \cH_{\bS,p})$, we see from equality
\eqref{Douglas-solve} that
\begin{equation}   \label{ineq3}
\| (X^{(n)}_{p} \otimes I_{\cH^{\circ}_{p}}) P_{[n_{0},N]}
P_{\cH_{\bR,p}} M q^{(n)} - P_{\cH_{\bS,p}} q^{(n)} \| < \epsilon_n
\cdot (\dim \cH^{\circ}_{p})^{1/2}.
\end{equation}
If we set $\De^{(n)} =
{\rm diag}_{p \in \bP} P_{[n_{0}, N]}
\left( X^{(n)}_{p} \otimes I_{\cH^{\circ}_{p}} \right) P_{[n_{0}, N]}$,
then we see that $\De^{(n)}$ has the correct block-diagonal
structure to be an element of the structure
$\bDelta_{\overline{\bG}}$ and furthermore $\| \De^{(n)}\| \le
1$ since $\| X^{(n)}_{p} \otimes I_{\cH^{\circ}_{p}} \| \le 1$ for
each $p$, i.e., $\De^{(n)}  \in \overline\cB
\bDelta_{\overline{\bG}}$.
Furthermore, from \eqref{ineq3} we see that
\begin{equation}  \label{ineq4}
    \| (I - \De^{(n)} M) q^{(n)} \| < \epsilon_{n} (\sum_{p
    \in \bP} \dim \cH^{\circ}_{p})^{1/2}.
\end{equation}

We now exploit the arbitrariness of $n_{0}$ in the preceding
analysis.  Proceeding inductively, we may take the support of
$q^{(n)}$ to be contained in an interval of the form $[t_{n},
t_{n+1}) \subset {\mathbb Z}_{+}$ with $t_{0} = 0$ such that these
intervals form a complete partition of ${\mathbb Z}_{+}$.  Now set
$$
  \De = \sum_{n=0}^{\infty} \De^{(n)} P_{[t_{n}, t_{n+1})}.
$$
Then it is easily seen that $\De \in \overline{\cB}
\bDelta_{\overline{\bG}}$.  When we apply $I - \De M$ to $q^{(n)}$ and
estimate the norm, we get
\begin{align}
    \| (I - \De M) q^{(n)} \| & = \| (I - \De \{ P_{[t_{n}, t_{n+1})} +
    (I - P_{[t_{n}, t_{n+1})}) \} M q^{(n)} \| \notag \\
    & = \| (I - \De^{(n)} M) q^{(n)} - \De (I - P_{[t_{n},
    t_{n+1})}) M q^{(n)} \|  \\
    & \le \| (I - \De^{(n)}M) q^{(n)} \| + \| (I - P_{t_{n},
    t_{n+1})}) M q^{(n)} \|\notag  \\
    & < \epsilon_{n} \cdot ( \sum_{p \in \bP} \dim
    \cH^{\circ}_{p})^{1/2} + \epsilon_{n}
    \label{estimate}
\end{align}
by \eqref{ineq4} and \eqref{ineq1''}.  As each $q^{(n)}$ is a  unit
vector and $\epsilon_{n} \cdot ( \sum_{p \in \bP} \dim \cH^{\circ}_{p})^{1/2}
+ \epsilon_{n} \to 0$ as $n \to \infty$,  we conclude that $I - \De M$ is not bounded below
and hence cannot be boundedly invertible, in contradiction to the
assumption that $\widehat \mu_{\bDelta_{\overline{\bG}}}(M) < 1$.
This completes the verification of Step 1.

\paragraph{\bf Verification of Step 2:}   The proof is modeled on Lemma 8.11
in \cite{DP} and follows the original idea of Megretski and Treil \cite{MT}.

Let $h, \widetilde h \in \cH_{\bS}$ with $\| h \| = \| \widetilde h
\| = 1$ and let $\alpha \in (0,1)$.  We shall prove that $\alpha
\phi_{p}(h) + (1 - \alpha) \phi_{p}(\widetilde h) \in
\overline{\nabla}$ for each $p \in \bP$.  The convexity of the set
$\overline{\nabla}$ then follows via a straightforward continuity
argument.

For each $n \in {\mathbb Z}_{+}$, set $h_{n} = \sqrt{\alpha} h +
\sqrt{1 - \alpha} V_{\bS}^{n} \widetilde h \in \cH_{\bS}$.  Then
\begin{align*}
    \| h_{n}\|^{2} &  = \alpha \| h \|^{2} + (1 - \alpha) \| V_{\bS}^{n}
    \widetilde h \|^{2} + 2 \sqrt{\alpha (1 - \alpha)}\, {\rm Re}\,
    \langle h, V_{\bS}^{n} \widetilde h \rangle \\
    & = \alpha + (1 - \alpha) + 2 \sqrt{(\alpha (1 - \alpha)} \,{\rm
    Re} \, \langle V_{\bS}^{*n} h, \widetilde h \rangle \\
    & = 1 + 2 \sqrt{ \alpha (1 - \alpha)}\,  {\rm Re}\, \langle
    V_{\bS}^{*}n h, \widetilde h \rangle.
\end{align*}
Since $V_{\bS}^{*n}$ converges strongly (hence also weakly) to $0$,
we conclude that
\begin{equation}   \label{norm1}
    \| h_{n}\|^{2} \to 1 \quad\text{ as }\quad n \to \infty.
\end{equation}

Next, writing out $h_{n} = \sqrt{\alpha} h +\sqrt{1 - \alpha} V_{\bS}^{n} \widetilde h$ and using the linearity of $U_{\bR,p}$ and $U_{\bS,p}$ we observe that
\begin{align*}
  &  \phi_{p}(h_{n})  = \left(U_{\bR,p}[P_{\cH_{\bR,p}}M  h_{n}]\right)^{*}
    U_{\bR,p}[P_{\cH_{\bR,p}}M h_{n}] -
\left( U_{\bS,p}[P_{\cH_{\bS,p}} h_{n}]\right)^{*}
U_{\bS,p}[P_{\cH_{\bS,p}} h_{n}] \\
& \qquad\qquad = \alpha \phi_{p}(h) + (1- \alpha) \phi_{p}(V_{\bS}^{n} \widetilde
h)  + 2 \sqrt{\alpha (1 - \alpha)} \cdot \\
&   \cdot {\rm Re}\,
\left( \left(U_{\bR,p}[P_{\cH_{\bR,p}} M h]\right)^{*}  U_{\bR,p}[P_{\cH_{\bR,p}}
M V_{\bS}^{n} \widetilde h]
-\left( U_{\bS,p}[P_{\cH_{\bS,p}} h]\right)^{*}
U_{\bS,p}[P_{\cH_{\bS,p}} V_{\bS}^{n} \widetilde h] \right).
\end{align*}
A consequence of the intertwining property \eqref{shiftintertwine}
and the shift-invariance of $M$ is that in fact
$$
\phi_{p}(V_{\bS}^{n} \widetilde h) = \phi_{p}(\widetilde h).
$$
Thus in fact we have
\begin{align}
    & \phi_{p}(h_{n}) =
 \alpha \phi_{p}(h) + (1- \alpha) \phi_{p}(\widetilde h)  + 2
 \sqrt{\alpha (1 - \alpha)} \cdot  \notag \\
&  \cdot {\rm Re}\,
\left( \left(U_{\bR,p}[P_{\cH_{\bR,p}} M h]\right)^{*}
U_{\bR,p}[P_{\cH_{\bR,p}} M V_{\bS}^{n} \widetilde h]
-\left( U_{\bS,p}[P_{\cH_{\bS,p}} h]\right)^{*}
U_{\bS,p}[P_{\cH_{\bS,p}} V_{\bS}^{n} \widetilde h] \right).
\label{phipconv}
\end{align}
We claim that the cross terms tend to zero (in trace-class norm) as $n \to \infty$.  A
sample term to check is
\begin{equation}   \label{check1}
 \left(U_{\bR,p}[P_{\cH_{\bR,p}} M h]\right)^{*}
 U_{\bR,p}[P_{\cH_{\bR,p}} M V_{\bS}^{n} \widetilde h]  \to 0 \text{ as } n \to
 \infty.
\end{equation}
We again use the shift invariance of $M$ and the intertwining property
\eqref{shiftintertwine} to see that
\begin{align*}
 & \left(  U_{\bR,p}[P_{\cH_{\bR,p}} M h] \right)^{*}  U_{\bR,p}[P_{\cH_{\bR,p}} M
    V_{\bS}^{n} \widetilde h]  =
\left( U_{\bR,p}[P_{\cH_{\bR,p}} M h] \right)^{*} \widetilde
V_{\bR}^{n} U_{\bR,p}[P_{\cH_{\bR,p}} M \widetilde h]
\\ & \qquad \qquad
=\left(  \widetilde V_{\bR}^{* n} U_{\bR,p}[P_{\cH_{\bR,p}} M h] \right)^{*}
     U_{\bR,p}[P_{\cH_{\bR,p}} M \widetilde h] \\
     &  \qquad \qquad = \left( U_{\bR,p}[ V_{\bR}^{* n} P_{\cH_{\bR,p}} M h] \right)^{*}
 U_{\bR,p}[  P_{\cH_{\bR,p}} M \widetilde h]
\end{align*}
The fact that $V_{\bR,p}^{* n} P_{\cH_{\bR,p}} M h \to 0$ in
$\cH_{\bR,p}$ implies that $U_{\bR,p}[ V_{\bR,p}^{*n} P_{\cH_{\bR,p}}
M h] \to 0$ in Hilbert-Schmidt norm, and hence \eqref{check1} now
follows.  A similar calculation shows that
$$
 \left( U_{\bS, p}[ P_{\bS,p} h] \right)^{*}  U_{\bS, p}[ P_{\bS,p} V_{\bS}^{n} \widetilde h]
 \to 0 \text{ as } n \to \infty.
$$
From \eqref{phipconv} we now read off
$$
\phi_{p}(h_{n}) \to \alpha \phi_{p}(h) + (1 - \alpha)
\phi_{p}(\widetilde h).
$$
As we observed already in \eqref{norm1} that $\|h_{n}\| \to 1$ as $n
\to \infty$, we see that we also have
$$
 \phi_{p}\left( \frac{h_{n}}{\| h_{n} \|}\right) =
 \frac{1}{\|h_{n}\|^{2}} \phi(h_{n}) \to \alpha \phi_{p}(h) + (1 -
 \alpha)  \phi_{p}(\widetilde h) \text{ as } n \to \infty.
$$
This exhibits the convex combination $\alpha \phi_{p}(h) + (1 -
\alpha) \phi_{p}(\widetilde h)$ of two elements of $\nabla$ as an
element of $\overline{\nabla}$ and completes the verification of Step
2.

\smallskip
The proof of Theorem \ref{T:I'} is now complete once one observes
that the combined results of Steps 1 and 2 lead immediately to the
validity of the implication \eqref{toshow}.
\end{proof}

\begin{remark} \label{R:nonsquare}  {\em  We note the following more
    general version of Theorem \ref{T:I}.

    \begin{theorem}  \label{T:Paganini}
	Let $\overline{\bG}$ and $\bG^{\circ}$ be as in Theorem
	\ref{T:I} with the number of components of $\bG^{\circ}$
	finite and with all coefficient Hilbert spaces
	$\cH^{\circ}_{p}$ finite-dimensional. Let $\cK$ be a fixed
	separable infinite-dimensional Hilbert space (e.g., $\cK =
	\ell^2$).  Then the following
	stabilizability and detectability results hold.
	
\begin{enumerate}
    \item  Suppose that $(A,B)$ is an input-pair of the form
    $$
      \begin{bmatrix} A & B \end{bmatrix} \colon \begin{bmatrix}
	  \cH^{\circ}_{\bS} \\ \cU \end{bmatrix} \to \cH^{\circ}_{\bR}
    $$
    Then the following conditions are equivalent:
    \begin{enumerate}
	
\item 	The operator
	$$ \begin{bmatrix} I - \Delta (I \otimes A)  & I \otimes B
	\end{bmatrix} \colon \begin{bmatrix} \cK \otimes \cH^{\circ}_{\bS} \\ \cK
	\otimes \cU \end{bmatrix} \to \cK \otimes \cH^{\circ}_{\bR}
	$$
is boundedly left invertible for all $\Delta \in \rho \overline{\cB}
\bDelta_{\overline{\bG}}$ for some $\rho > 1$.

\item There exist positive definite operators $\Gamma^{\circ}_{p}
\succ 0$ on $\cH^{\circ}_{p}$ so that
$$
  A \left( \bigoplus_{p \in \bP} (I_{\cH_{\bS,p}} \otimes
  \Gamma^{\circ}_{p})  \right) A^{*} - \left( \bigoplus_{p \in \bP}
  I_{\cH_{\bR,p}} \otimes \Gamma^{\circ}_{p} \right) - B B^{*}
  \preceq 0.
$$

\item There exist a feedback operator $F \colon \cH_{\bS} \to \cU$ so
that $\mu_{\overline{\bG}}(I_{\cK} \otimes (A + BF)) < 1$, i.e., for
some $\rho > 1$ the operator $I - \Delta (I \otimes (A + B F))$ is
boundedly invertible for each $\Delta \in \overline{\cB}
\bDelta_{\overline{\bG}}$.
\end{enumerate}

\item Suppose that $(C,A)$ is an output-pair of the form
$$
   \begin{bmatrix} A \\ C \end{bmatrix}  \colon \cH^{\circ}_{\bS} \to
       \begin{bmatrix} \cH^{\circ}_{\bR} \\ \cY \end{bmatrix}.
$$
Then the following conditions are equivalent:
\begin{enumerate}
    \item  The operator
$$
\begin{bmatrix} I - \Delta(I \otimes A) \\ I \otimes C \end{bmatrix}
    \colon \cK \otimes \cH^{\circ}_{\bS} \to \begin{bmatrix}
   \cK \otimes  \cH^{\circ}_{\bS} \\ \cK \otimes \cY \end{bmatrix}
$$
is boundedly left invertible for all $\Delta \in \rho \overline{\cB}
\bDelta_{\overline{\bG}}$ for some $\rho > 1$.

\item There exist positive definite operators $\Gamma^{\circ}_{p}
\succ 0$ on $\cH^{\circ}_{p}$ so that
$$
  A^{*} \left( \bigoplus_{p \in \bP} (I_{\cH_{\bR,p}} \otimes
  \Gamma^{\circ}_{p})  \right) A - \left( \bigoplus_{p \in \bP}
  I_{\cH_{\bS,p}} \otimes \Gamma^{\circ}_{p} \right) - C^{*}C
  \prec 0.
$$

\item There exists an output injection $L \colon \cY \to \cH_{\bR}$
so that $\mu_{\overline{\bG}}(I_{\cK} \otimes (A + LC)) < 1$, i.e., for
some $\rho > 1$ the operator $I - \Delta (I \otimes (A + LC))$ is
boundedly invertible for each $\Delta \in \rho \overline{\cB}
\bDelta_{\overline{\bG}}$.
\end{enumerate}
\end{enumerate}
\end{theorem}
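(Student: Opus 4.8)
The plan is to deduce Theorem~\ref{T:Paganini} from the Main Result (Theorem~\ref{T:I}) by the same ``Main Loop Theorem'' device already exploited in Remark~\ref{R:mu-to-ncBRL}, after first collapsing the two items to one by duality. Taking adjoints and interchanging the source side $\bS$ and the range side $\bR$ of $G$ turns the input--pair statement~(1) for $\overline{\bG}$ into the output--pair statement~(2) for the reversed admissible graph $\overline{\bG}^{\mathrm{rev}}$: one has $L_{\overline{\bG}}(Z)^{*}=L_{\overline{\bG}^{\mathrm{rev}}}(Z^{*})$ (each $\bDelta_{p}$ being a $C^{*}$-algebra, hence $*$-closed), so $(\bDelta_{\overline{\bG}})^{*}=\bDelta_{\overline{\bG}^{\mathrm{rev}}}$ and $\mu_{\bDelta_{\overline{\bG}}}(M)=\mu_{\bDelta_{\overline{\bG}^{\mathrm{rev}}}}(M^{*})$, while the feedback operator $F$ corresponds to an output injection $L=F^{*}$ and the structured Stein inequality in~(b) passes to its adjoint. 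Since $\overline{\bG}^{\mathrm{rev}}$ again meets the standing hypotheses (finitely many components, finite-dimensional coefficient spaces), it suffices to prove item~(2), and I would establish the cycle (c)$\Rightarrow$(a)$\Rightarrow$(b)$\Rightarrow$(c).

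For (c)$\Rightarrow$(a), write $I\otimes A=I\otimes(A+LC)-(I\otimes L)(I\otimes C)$, so that $I-\De(I\otimes A)=\bigl(I-\De(I\otimes(A+LC))\bigr)+\De(I\otimes L)(I\otimes C)$. Hence if $\bigl\|\sbm{(I-\De(I\otimes A))h\\ (I\otimes C)h}\bigr\|$ is small then both entries are small, so $\bigl(I-\De(I\otimes(A+LC))\bigr)h=(I-\De(I\otimes A))h-\De(I\otimes L)\bigl((I\otimes C)h\bigr)$ is small; combined with a uniform bound on $\bigl(I-\De(I\otimes(A+LC))\bigr)^{-1}$ over $\De$ in the closed unit ball of $\bDelta_{\overline{\bG}}$ --- which follows from the slack $\rho>1$ present in~(c) by exactly the uniform-boundedness argument used to prove Lemma~\ref{L:DP} --- this forces $\|h\|$ to be small, giving the required uniform lower bound in~(a).

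The heart of the matter is (a)$\Rightarrow$(b), and here I would invoke Theorem~\ref{T:I}. Enlarge the A-graph $\overline{\bG}$ to $\overline{\bG}^{+}$ by adjoining one extra complete bipartite path-connected component with one-dimensional coefficient space; as noted in Remark~\ref{R:mu-to-ncBRL}, such a component realizes within the graph formalism an unstructured (``full'') operator block between the relevant tensored spaces, and $\overline{\bG}^{+}$ still satisfies the hypotheses of Theorem~\ref{T:I}. The Main Loop Theorem (see \cite[Theorem~11.7]{ZDG}, as used in Remark~\ref{R:mu-to-ncBRL}) then recasts~(a) --- uniform boundedness below of $\sbm{I-\De(I\otimes A)\\ I\otimes C}$ over $\De$ in the $\rho$-enlarged structured ball --- as $\mu_{\bDelta_{\overline{\bG}^{+}}}(I_{\cK}\otimes\widehat M^{\circ})<1$ for an appropriate finite-dimensional generalized plant $\widehat M^{\circ}$ assembled from $A$ and $C$ (the full block occupying the output-injection channel). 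By Theorem~\ref{T:I} this equals $\widehat\mu_{\bDelta_{\bG^{\circ,+}}}(\widehat M^{\circ})<1$, i.e.\ there exist positive definite $\Gamma^{\circ}_{p}$ on the $\cH^{\circ}_{p}$ together with a positive scalar attached to the adjoined component solving the structured Stein inequality~\eqref{LMItilde} for $\widehat M^{\circ}$; extracting the block of that inequality corresponding to the $\cH^{\circ}_{\bS}\to\cH^{\circ}_{\bR}$ channel and dividing out the scalar leaves precisely the inequality obtained from~\eqref{muhatcond} by inserting the extra $-C^{*}C$ term, which is condition~(b). (The converse (b)$\Rightarrow$(a) runs the same equivalences backwards, or is absorbed into (b)$\Rightarrow$(c)$\Rightarrow$(a).) Alternatively, the direct convexity analysis of Section~\ref{S:DPproof} adapts to this setting with $M$ replaced by $I_{\ell^{2}}\otimes\widehat M^{\circ}$, which is shift-invariant, giving a self-contained proof in the style of \cite{DP}.

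Finally, (b)$\Rightarrow$(c) is a completion-of-the-square / elimination-lemma argument (cf.\ \cite[Ch.~8]{DP}): from $A^{*}\bigl(\bigoplus_{p}I_{\cH_{\bR,p}}\otimes\Gamma^{\circ}_{p}\bigr)A-\bigl(\bigoplus_{p}I_{\cH_{\bS,p}}\otimes\Gamma^{\circ}_{p}\bigr)-C^{*}C\prec0$ --- which in particular is strictly negative on $\ker C$, the condition under which output-injection placement succeeds --- I would produce $L\colon\cY\to\cH_{\bR}$ with $(A+LC)^{*}\bigl(\bigoplus_{p}I_{\cH_{\bR,p}}\otimes\Gamma^{\circ}_{p}\bigr)(A+LC)-\bigl(\bigoplus_{p}I_{\cH_{\bS,p}}\otimes\Gamma^{\circ}_{p}\bigr)\prec0$; since tensoring with $I_{\cK}$ preserves operator norms and the pair $\bigl(\bigoplus_{p}I_{\cH_{\bR,p}}\otimes\Gamma^{\circ}_{p},\ \bigoplus_{p}I_{\cH_{\bS,p}}\otimes\Gamma^{\circ}_{p}\bigr)$ lies in $\bDelta'_{\overline{\bG}}$ by Proposition~\ref{P:intertwining} and~\eqref{tildeDelta'}, the latter inequality says $\widehat\mu_{\bDelta_{\overline{\bG}}}(I_{\cK}\otimes(A+LC))<1$, whence $\mu_{\bDelta_{\overline{\bG}}}(I_{\cK}\otimes(A+LC))<1$ by~\eqref{muGineq}, which is~(c). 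I expect the main obstacle to be the Main Loop bookkeeping in the previous paragraph: pinning down the correct generalized plant $\widehat M^{\circ}$ (in particular the sign structure that produces the $-C^{*}C$ term), verifying that ``uniform boundedness below of the tall operator for all $\De$'' genuinely corresponds to the two-sided $\mu$-condition rather than a one-sided relaxation of it, and keeping the strict versus non-strict inequalities and the $\rho>1$ slack consistent along the whole chain --- a delicacy that also surfaces in the duality reduction, where item~(1)(b) is phrased with the non-strict $\preceq0$.
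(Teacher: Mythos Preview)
The paper does not actually prove Theorem~\ref{T:Paganini}.  It is stated inside Remark~\ref{R:nonsquare} with the comment that details for the multiplicity-one case are in \cite{Paganini}, and that ``we expect that either of the proofs of Theorem~\ref{T:I} presented here can be adapted to arrive at the more general formulation in Theorem~\ref{T:Paganini}.''  So there is no proof in the paper to compare your proposal against; your outline is precisely the kind of adaptation the authors anticipate but do not carry out.

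Your overall architecture --- dualize item~(1) to item~(2), then run the cycle (c)$\Rightarrow$(a)$\Rightarrow$(b)$\Rightarrow$(c), invoking Theorem~\ref{T:I} for the hard implication (a)$\Rightarrow$(b) via a Main-Loop-style augmentation of the graph --- is in the spirit of Remark~\ref{R:mu-to-ncBRL} and of the paper's own suggestion.  The implications (c)$\Rightarrow$(a) and (b)$\Rightarrow$(c) are standard and your sketches for them are fine.

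The genuine gap is exactly the one you flag yourself.  The Main Loop Theorem as invoked in Remark~\ref{R:mu-to-ncBRL} is a statement about \emph{two-sided} invertibility of a square $2\times 2$ block, and $\mu_{\bDelta_{\overline{\bG}^{+}}}(\cdot)<1$ is likewise a two-sided condition.  Condition~(a) in item~(2), by contrast, is the \emph{one-sided} statement that the tall operator $\sbm{I-\De(I\otimes A)\\ I\otimes C}$ is bounded below.  You have not exhibited a generalized plant $\widehat M^{\circ}$ for which $\mu_{\bDelta_{\overline{\bG}^{+}}}(I_{\cK}\otimes\widehat M^{\circ})<1$ is \emph{equivalent} to that one-sided condition, and the usual Main Loop identity does not produce one directly: a full block in the output-injection channel tests surjectivity/invertibility of a Schur complement, not boundedness below of a column.  (The same issue appears dually in item~(1)(a), where the wide operator $[\,I-\De(I\otimes A)\ \ I\otimes B\,]$ should presumably be boundedly \emph{right} invertible, i.e.\ surjective, rather than left invertible as written.)  To close this you would either need a one-sided variant of the Main Loop reduction, or --- more in line with the paper's second suggestion --- adapt the convexity analysis of Section~\ref{S:DPproof} directly: redefine the quadratic maps $\phi_{p}$ using $A$ and append a scalar component built from $C$, so that the analogue of Lemma~\ref{L:1} becomes precisely the bounded-below hypothesis~(a), and the analogue of Lemma~\ref{L:2} delivers the LMI in~(b) with the extra $-C^{*}C$ term.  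That route avoids the Main Loop bookkeeping altogether and is probably what the authors have in mind when they say either proof ``can be adapted.''
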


For the simple multiplicity case ($\cH^{\circ}_{p} = {\mathbb C}$ for
all $p$), details of this result can be found in \cite{Paganini}; a
nice summary (with no proofs) is in \cite{ZDG}. We expect that either
of the proofs of Theorem \ref{T:I} presented here can be adapted to
arrive at the more general formulation in Theorem \ref{T:Paganini}.
We refer also to \cite{BtH} for additional information and perspective.
}\end{remark}

\begin{remark} \label{R:transfunc}  {\em
    For the case where $M = I_{\ell^{2}} \otimes M^{\circ}$ where
    $M^{\circ}$ is an operator between the finite-dimensional spaces $\cH^{\circ}_{\bS}$ to
    $\cH^{\circ}_{\bR}$, the Linear Operator Inequality
    \eqref{muhatcond} reduces to the finite-dimensional Linear Matrix
    Inequality
\begin{equation}   \label{muhatcondLMI}
    M^{\circ *} \left( \bigoplus_{p \in \bP} I_{\widetilde
    \cH_{\bR,p}} \otimes \Gamma^{\circ}_{p} \right) M^{\circ}
- \left( \bigoplus_{p \in \bP} I_{\widetilde \cH_{\bS,p}} \otimes
\Gamma^{\circ}_{p} \right) \prec 0.
\end{equation}
In case $M$ is a shift-invariant operator from $\ell^{2}
\otimes \cH^{\circ}_{\bS}$ to $\ell^{2} \otimes \cH^{\circ}_{\bR}$,
there appears no reason for the LOI \eqref{muhatcond} to collapse to
an LMI like \eqref{muhatcondLMI} in general.  However, if we assume
that $M$ is given via convolution with a distribution having $Z$-transform equal to
a rational matrix function $\widehat M(\la)$ having state-space realization
$$
   \widehat M(\la)  = D + \la C (I - \la A)^{-1} B
$$
with $A$ stable (the spectrum $\sigma(A)$ of $A$ is inside the unit disk
${\mathbb D}$)
where the system matrix
$$
 \bU = \begin{bmatrix} A  & B \\ C & D \end{bmatrix} \colon
 \begin{bmatrix} \cX \\ \widetilde \cH_{\bS} \end{bmatrix} \to \begin{bmatrix} \cX \\
 \widetilde \cH_{\bR} \end{bmatrix}
$$
is finite-dimensional,  then it is possible to convert the LOI
\eqref{muhatcond} to an LMI condition as follows.  Rewrite the LOI
\eqref{muhatcond} as
\begin{equation}   \label{rewrite1}
\left\| \left( \bigoplus_{p \in \bP} I_{\ell^{2}} \otimes (I_{\widetilde
\cH_{\bR,p}} \otimes \Gamma^{\circ}_{p})^{1/2} \right) M
\left( \bigoplus_{p \in \bP} I_{\ell^{2}} \otimes (I_{\widetilde
\cH_{\bS,p}} \otimes \Gamma^{\circ}_{p})^{1/2} \right)^{-1/2} \right\| < 1.
\end{equation}
After applying the $Z$-transform to move to the frequency domain, we
see from \eqref{rewrite1} that
\begin{equation}\label{rationalDscale}
 \sup_{\la \in {\mathbb D}} \left\| \left( \bigoplus_{p \in
 \bP}(I_{\widetilde \cH_{\bR,p}} \otimes \Gamma^{\circ}_{p})^{1/2}
 \right) \widehat M(\la) \left( \bigoplus_{p \in \bP} (I_{\widetilde \cH_{\bS,p}}
 \otimes \Gamma^{\circ}_{p})^{-1/2} \right) \right\| < 1.
\end{equation}
As we are assuming that $A$ is stable, the standard strict Bounded
Real Lemma implies that there is a positive-definite $X \succ 0$ on
$\cX$ so such that
\begin{equation}  \label{tildeBRL}
\begin{bmatrix} \widetilde A^{*} & \widetilde C^{*} \\ \widetilde
B^{*} & \widetilde D^{*} \end{bmatrix} \begin{bmatrix} X & 0 \\ 0 & I
\end{bmatrix} \begin{bmatrix} \widetilde A & \widetilde B \\
\widetilde C & \widetilde D \end{bmatrix} - \begin{bmatrix} X & 0 \\
0 & I \end{bmatrix} \prec 0
\end{equation}
where we have set
$$
  \begin{bmatrix} \widetilde A & \widetilde B \\ \widetilde C &
      \widetilde D \end{bmatrix}  =
   \begin{bmatrix} I & 0 \\ 0 & (I_{\widetilde \cH_{\bR,p}} \otimes
       \Gamma^{\circ}_{p})^{1/2} \end{bmatrix}
 \begin{bmatrix} A & B  \\ C & D \end{bmatrix}
  \begin{bmatrix} I & 0 \\ 0 & ( I_{\widetilde \cH_{\bR,p}} \otimes
      \Gamma^{\circ}_{p})^{-1/2} \end{bmatrix}.
$$
The condition \eqref{tildeBRL} in turn can be rewritten in the form
\begin{equation}   \label{LMIreduced}
    \begin{bmatrix} A^{*} & C^{*} \\ B^{*} & D^{*} \end{bmatrix}
\begin{bmatrix} X & 0 \\ 0 & \bigoplus_{p \in \bP} (I_{\widetilde
    \cH_{\bR,p}} \otimes \Gamma^{\circ}_{p}) \end{bmatrix}
 \begin{bmatrix} A & B \\ C & D \end{bmatrix}
-  \begin{bmatrix} X & 0 \\ 0 & \bigoplus_{p \in \bP} (I_{\widetilde
\cH_{\bS,p}} \otimes \Gamma^{\circ}_{p}) \end{bmatrix} \prec 0.
\end{equation}
This last condition \eqref{LMIreduced} finally gives us an LMI
equivalent to the LOI \eqref{muhatcond} for this case.  We note that
this analysis is just the discrete-time equivalent of
Proposition~8.6 in \cite{DP}.
}\end{remark}

\begin{remark} \label{R:slowlytimevarying}  {\em
 In our analysis to this point we have
    considered  structure subspaces $\bDelta \subset \cL(\ell^2 \otimes {\mathbb C}^N)$
    defined by spatial constraints (block diagonal matrix representation) without any
    dynamic constraints.  It is natural to impose some additional
    constraints involving dynamics or parameter restrictions (e.g.,
    forcing the parameters to be real)---see \cite[pages
    255--256]{DP} as well as \cite{Paganini}.
    In this extended remark we discuss some of these additional
    considerations which have been discussed in the literature.

  Consider the setting of Theorem \ref{T:I'} but with the structure
  subspace $\bDelta_{\overline{G}}$ as in
  \eqref{uncer-en2} or \eqref{uncer-en3} (with $\cK = \ell^2$) replaced by
  \begin{equation}   \label{uncer-en4}
       \bDelta_{\overline{\bG}, \textbf{TV}} = \{ W = \rm{diag}_{k=1,
      \dots, K} [W_{k} \otimes I_{\cH^{\circ}_{p_{k}}}] \colon W_{k}
      \in \cL(\ell^{2})^{n_{k} \times m_{k}} \text{ and } W_{k} V =
      V W_{k} \},
 \end{equation}
 i.e.,  $ \bDelta_{\overline{\bG}, \textbf{TV}}$ consists
 of those elements of $ \bDelta_{\overline{\bG}}$ which are also
 shift-invariant.  Then, for the case where $M$ is as in Remark
 \ref{R:transfunc} (i.e., given via multiplication by a rational
 transfer function $\widehat M(\lambda)$ after transforming to the
 frequency domain via the $Z$-transform), one can argue that
 $\mu_{ \bDelta_{\overline{\bG}, \textbf{TV}}}(M)$ is given
 by a supremum of a pointwise structured singular value for the
 matrix function $\widehat M(\zeta)$:
 \begin{equation}   \label{TVmu}
 \mu_{ \bDelta_{\overline{\bG}, \textbf{TV}}}(M) =
 \sup_{\zeta \in {\mathbb T}} \mu_{\bDelta_{\overline{\bG}}}(\widehat
 M(\zeta)).
 \end{equation}
 Indeed, this point is argued in detail in \cite[Theorem 8.22]{DP}
 for the case where the structure space $\bDelta_{\overline{\bG}}$
 has the special form \eqref{structure} (square blocks with
 only scalar blocks have higher multiplicity);  it is now straightforward
 to adapt the argument to the more general structure
 $\bDelta_{\overline{\bG}}$.  As we have already noted in Section
 \ref{S:intro}, computation of $\mu_{\bDelta_{\overline{\bG}}}(\widehat
 M(\zeta))$ at a fixed value of $\zeta$ is problematical, hence
 computation of the supremum in \eqref{TVmu} is even more so.  A natural
 upper bound for   $\mu_{ \bDelta_{\overline{\bG},
 \textbf{TV}}}(M)$ is the frequency-dependent $D$-scaling
 $$
   \widehat \mu_{ \bDelta_{\overline{\bG}, \textbf{TV}}}(M)
   : = \inf_{D} \sup_{\zeta \in {\mathbb T}}
   \{ \| (I_{\widetilde \cH_{\bR,p}} \otimes D(\zeta)) \widehat M(\zeta)
   (I_{\widetilde \cH_{\bR,p}} \otimes D(\zeta))^{-1} \|
 $$
 where the infimum can be taken over $D(\zeta)$ equal to a stable
 rational matrix function invertible on ${\mathbb T}$.  As
 discussed in Section \ref{S:intro} above, this upper bound is
 arbitrarily bad (in various technical senses) when taken at a fixed
 frequency $\zeta \in {\mathbb T}$, and hence has no chance of being
 sharp for this frequency-dependent situation.

 Poolla-Tikku \cite{PT95} provide a different perspective on this issue, by giving a robust control
 interpretation to the quantity $ \widehat \mu_{ \bDelta_{\overline{\bG}, \textbf{TV}}}(M)$. Extending the setting of \cite{PT95} to our set of structured uncertainties $\bDelta_{\overline{\bG}}$, for a positive parameter $\nu$ we let $ \bDelta_{\overline{\bG}, \nu}$ consist of
 those operators $\Delta$ in $ \bDelta_{\overline{\bG}}$
 such that
 $$
    \| V \Delta - \Delta V \| \le \nu
 $$
 where $V$ as usual is the forward shift operator on $\ell^{2}$ (of
 whatever multiplicity fits the context).  Thus operators in
 $ \bDelta_{\overline{\bG}, \nu}$ are constrained to be
 {\em slowly time-varying}, with precise amount of slowness measured
 by $\nu$ (the smaller the $\nu$ the more slow is the time variance
 with $\nu = 0$ corresponding to time-invariance and $\nu = 2 \|
 \Delta \|$ correspondence to no restriction at all).  A corollary of
 the more precise results from
 \cite{PT95}, again for the classical spacial case where $\bDelta$ is
 given by \eqref{structure}, is the following:  {\em
 $\widehat \mu_{ \bDelta_{\overline{\bG}, \textbf{TV}}}(M) < 1$ if and only
 if there is some $\nu > 0$ so that $\mu_{
 \bDelta_{\overline{\bG}, \nu}}(M) < 1$.}  As any disturbance in
 practice can be expected to have some time-variance, it is argued in
 \cite{PT95} that computation of the upper bound
 $\widehat \mu_{ \bDelta_{\overline{\bG}, \textbf{TV}}}(M)$
 makes more sense physically than the original quantity
 $\mu_{ \bDelta_{\overline{\bG}, \textbf{TV}}}(M)$.
 Followup work of Paganini \cite{PaganiniAut} (see also Chapter 3 of
 \cite{Paganini})  showed how one can incorporate time-invariant and
 time-variant blocks as well as blocks with parametric uncertainty
 simultaneously.  The paper of K\"oro\u{g}lu-Scherer \cite{KS2007}
 refines the results still further for a general block structure
 (possibly nonsquare blocks with arbitrary multiplicities) with
 preassigned bounds on the time-variation of the blocks, obtaining
 upper and lower bounds on the optimal possible performance for this
 general setting.  Much of this work (including the book \cite{DP})
 also incorporates a causality constraint on the original plant and the admissible
 perturbation operators $\Delta$.  Recent work of Scherer-K\"ose \cite{SK2012}
 analyzes the application of frequency-dependent $D$-scaling techniques to the somewhat
 more general setup of a gain-scheduled feedback configuration.
 }\end{remark}

\section{The enhanced uncertainty structure of
Bercovici-Foias-Khargonekar-Tannenbaum} \label{S:BFKT}

An alternative enhancement $\widetilde \mu_{\bDelta}(M)$ of the
structured  singular value $\mu_{\bDelta}(M)$ leading to an equality
with the upper bound $\widetilde \mu_{\bDelta}(M) = \widehat
\mu_{\bDelta}(M)$ was introduced and developed by Bercovici, Foias and
Tannenbaum in \cite{BFT1990}.  Later work with Khargonekar
\cite{BFKT, BFT1996} obtained an extension to infinite-dimensional situations.  Here we
show how the main result can be obtained as a simple adaptation of
the convexity-analysis approach  of Dullerud-Paganini.
The following result is essentially Theorem 3 from \cite{BFT1990} with a
couple of modifications:  our result is more general in that we allow
$\bDelta$ to have nonsquare blocks  and hence not a $C^{*}$-algebra;
on the other hand here we consider only the multiplicity-1 case so we are
not allowing the structure $\bDelta$ to be a general $C^{*}$-subalgebra
as in \cite{BFT1990}.  The result can also be seen to follow as a corollary of the more
general results concerning robustness with respect to mixed
linear-time-varying/linear-time-invariant structured uncertainty
(see Chapter 3 of \cite{Paganini}).

The setup is close to that of Theorem \ref{T:I} with a couple of
differences.  We let $\bG^{\circ}$ be a multiplicity-1 $M$-graph;
thus the spaces $\cH^{\circ}_{p}= {\mathbb C}$ for all $p \in \bP$.
We therefore generate the source and range coefficient spaces
$$
 \cH^{\circ}_{\bS} = \bigoplus_{p \in \bP} \cH^{\circ}_{\bS,p} \text{ where }
 \cH^{\circ}_{\bS,p} = \bigoplus_{s \colon [s] = p} {\mathbb C}, \quad
 \cH^{\circ}_{\bR} = \bigoplus_{p \in \bP} \cH^{\circ}_{\bR,p} \text{ where }
 \cH^{\circ}_{\bR,p} = \bigoplus_{r \colon [r] = p} {\mathbb C}
$$
and the structure subspace
$$
\bDelta_{\bG^{\circ}} = {\rm diag}_{p \in \bP} \cL(\cH^{\circ}_{R,p}, \cH^{\circ}_{S,p})
$$
For the enhanced structure we proceed as in Subsection
\ref{S:enhanced}, but with $\cK = \cH^{\circ}_{\bS}$.  This generates
enhanced source and range coefficient spaces
$$
\cH_{\bS} = \cH^{\circ}_{S} \otimes \cH^{\circ}_{S}, \quad
\cH_{\bS,p} = \cH^{\circ}_{\bS} \otimes \cH^{\circ}_{\bS,p},
\quad
\cH_{\bR} = \cH^{\circ}_{R} \otimes \cH^{\circ}_{R}, \quad
\cH_{\bR,p} = \cH^{\circ}_{\bR} \otimes \cH^{\circ}_{\bR,p}
$$
with the resulting enhanced structure (see \eqref{uncer-en2})
$$
\bDelta_{\overline{\bG}} = {\rm diag}_{p \in \bP}
\cL(\cH^{\circ}_{\bS} \otimes \cH^{\circ}_{\bR,p},
\cH^{\circ}_{\bS} \otimes \cH^{\circ}_{\bS,p}).
$$
Then we have the following version of Theorem 3 from \cite{BFT1990}.

\begin{theorem}  \label{T:BFT}
    Let $\bDelta_{\bG^{\circ}}$ and $\bDelta_{\overline{\bG}}$ be as
    above, assume the graph $\bG^{\circ}$ is finite and let $M^{\circ}$ be
    any operator from $\cH^{\circ}_{\bS}$ into $\cH^{\circ}_{\bR}$.
Then
$$
\widetilde \mu_{\bDelta_{\bG^{\circ}}}(M^{\circ}): =
\mu_{\bDelta_{\overline{\bG}}}(I_{\cH_{\bS}} \otimes M^{\circ}) = \widehat
\mu_{\bDelta_{\bG^{\circ}}}(M^{\circ}).
$$
\end{theorem}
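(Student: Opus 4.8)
The plan is to run the convexity-analysis scheme of Section~\ref{S:DPproof} with $\cK=\cH^{\circ}_{\bS}$ in the role of the ambient Hilbert space, and to observe that the finiteness of the graph together with the multiplicity-$1$ assumption ($\cH^{\circ}_p=\BC$ for every $p\in\bP$) makes the two delicate ``shift'' steps (Steps 1 and 2 in the proof of Theorem~\ref{T:I'}, and the companion Lemma~\ref{L:DP}) entirely unnecessary: they get replaced by a one-line density-operator observation.

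First I would reduce exactly as in Remark~\ref{R:reduct}. Set $M=I_{\cK}\otimes M^{\circ}$. The identity $\bDelta_{\overline{\bG}}'=I_{\cK}\otimes\bDelta_{\bG^{\circ}}'$, valid here just as in \eqref{tildeDelta'}, gives $\widehat\mu_{\bDelta_{\overline{\bG}}}(M)=\widehat\mu_{\bDelta_{\bG^{\circ}}}(M^{\circ})$; combined with the general inequality \eqref{muGineq} and a scaling argument this reduces the theorem to the single implication
$$
\mu_{\bDelta_{\overline{\bG}}}(M)<1\quad\Longrightarrow\quad\widehat\mu_{\bDelta_{\bG^{\circ}}}(M^{\circ})<1 .
$$
Because $\cH^{\circ}_p=\BC$, the maps $\phi_p$ of \eqref{phip} are scalar-valued, $\phi_p(h)=\|P_{\cH_{\bR,p}}Mh\|^2-\|P_{\cH_{\bS,p}}h\|^2$; the set $\nabla$ of \eqref{nabla} lies in $\BR^{\bP}$ and the set $\Pi$ of \eqref{Pi} is the closed positive orthant $[0,\infty)^{\bP}$. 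The proofs of Lemma~\ref{L:1} and of Lemma~\ref{L:2} then go through verbatim (they use only Proposition~\ref{P:Douglas}, the definition of $\mu<1$, the boundedness of $\nabla$, and a Hahn-Banach separation, none of which involve shifts): $\mu_{\bDelta_{\overline{\bG}}}(M)<1$ forces $\nabla\cap\Pi=\emptyset$, while $\widehat\mu_{\bDelta_{\bG^{\circ}}}(M^{\circ})<1$ is equivalent to the strict separation of $\nabla$ and $\Pi$ by a positive tuple $(\Gamma^{\circ}_p)_{p\in\bP}$.

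The crux — and the only place where the special choice $\cK=\cH^{\circ}_{\bS}$ is used — is that here $\nabla$ is compact and, in addition, \emph{convex}, so that the plain disjointness $\nabla\cap\Pi=\emptyset$ is already the separation-ready statement $\overline{\rm co}\,\nabla\cap\Pi=\emptyset$, which by Hahn-Banach (Theorem~3.4(b) in \cite{Rudin}) produces the required strict separation and hence, by Lemma~\ref{L:2}, the conclusion $\widehat\mu_{\bDelta_{\bG^{\circ}}}(M^{\circ})<1$. Compactness of $\nabla$ is immediate: it is the continuous image of the unit sphere of the finite-dimensional space $\cH_{\bS}$. For convexity I would identify $\cH_{\bS}=\cK\otimes\cH^{\circ}_{\bS}=\cH^{\circ}_{\bS}\otimes\cH^{\circ}_{\bS}$ with $\cC_{2}(\cH^{\circ}_{\bS})$ through the unitary of Proposition~\ref{P:tensor}, write $\widehat h$ for the Hilbert-Schmidt operator corresponding to $h$, and use Proposition~\ref{P:tensor2} (with $X=I_{\cK}$, $Y=M^{\circ}$) in the same way as the trace computations in the proof of Lemma~\ref{L:2} to obtain
$$
\phi_p(h)={\rm tr}\big(\widehat h^{*}\widehat h\cdot N_p\big),\qquad p\in\bP,
$$
where, up to a harmless transpose coming from the identification map, $N_p=(M^{\circ})^{*}P^{\bR}_p M^{\circ}-P^{\bS}_p$ is a \emph{fixed} self-adjoint operator on $\cH^{\circ}_{\bS}$ ($P^{\bR}_p$ and $P^{\bS}_p$ being the orthogonal projections of $\cH^{\circ}_{\bR}$ onto $\cH^{\circ}_{\bR,p}$ and of $\cH^{\circ}_{\bS}$ onto $\cH^{\circ}_{\bS,p}$). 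As $h$ ranges over the unit sphere of $\cH_{\bS}$, the Gram operator $\rho_h:=\widehat h^{*}\widehat h$ ranges over \emph{all} positive semidefinite operators on $\cH^{\circ}_{\bS}$ of trace one — this is exactly the point at which $\dim\cK=\dim\cH^{\circ}_{\bS}$ enters, as it guarantees that every density operator $\rho$ on $\cH^{\circ}_{\bS}$ is of the form $\widehat h^{*}\widehat h$ (for instance $\widehat h=\rho^{1/2}$). Consequently
$$
\nabla=\big\{\,({\rm tr}(\rho N_p))_{p\in\bP}\ :\ \rho\succeq 0,\ {\rm tr}\,\rho=1\,\big\}
$$
is the image of the convex set of density operators under the linear map $\rho\mapsto({\rm tr}(\rho N_p))_{p\in\bP}$, hence convex.

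Assembling the pieces completes the proof of the implication, and the initial reduction then yields the equality $\widetilde\mu_{\bDelta_{\bG^{\circ}}}(M^{\circ})=\widehat\mu_{\bDelta_{\bG^{\circ}}}(M^{\circ})$. I expect the only genuinely non-routine point to be the index/conjugation bookkeeping in the displayed formula for $\phi_p$ — tracking the transpose/conjugation operators generated by Propositions~\ref{P:tensor} and~\ref{P:tensor2} — together with the (immediate) surjectivity of $\widehat h\mapsto\widehat h^{*}\widehat h$ onto the density operators on $\cH^{\circ}_{\bS}$; conceptually the density-operator observation carries the entire new content of the theorem, and in particular no analogue of Lemma~\ref{L:DP}, and none of Steps 1--2 of the proof of Theorem~\ref{T:I'}, is needed.
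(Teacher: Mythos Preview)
Your proposal is correct and follows essentially the same route as the paper's own proof: the paper also reduces via the same scaling/commutant argument, sets up the scalar-valued $\phi_p$ and the sets $\nabla\subset\BR^{\bP}$ and $\Pi=[0,\infty)^{\bP}$, proves the analogues of Lemmas~\ref{L:1} and~\ref{L:2}, notes that $\nabla$ is closed by finite-dimensionality, and proves convexity of $\nabla$ by exactly the density-operator observation you describe (the paper phrases it as factoring $\alpha hh^*+(1-\alpha)\widetilde h\widetilde h^*=kk^*$ with $k\in\cC_2(\cH^\circ_\bS)$, which is the two-point version of your statement that $h\mapsto \widehat h^*\widehat h$ surjects onto the density operators). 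Your framing of $\nabla$ as the linear image of the density-operator simplex is slightly more conceptual than the paper's explicit construction, but the mathematical content is identical.
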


\begin{proof}
    We use the identification maps
$$
\left(U_{\cH^{\circ}_{\bS}, \cH^{\circ}_{\bS}} \right)^{\top} \colon
\cH^{\circ}_{\bS} \otimes \cH^{\circ}_{\bS} \to
\cC_{2}(\cH^{\circ}_{\bS}), \quad
\left(U_{\cH^{\circ}_{\bS}, \cH^{\circ}_{\bR}} \right)^{\top} \colon
\cH^{\circ}_{\bS} \otimes \cH^{\circ}_{\bS} \to
\cC_{2}(\cH^{\circ}_{\bS}, \cH^{\circ}_{\bR})
$$
to view elements of $\cH_{\bS}$ and $\cH_{\bR}$ as being in the Hilbert
spaces of Hilbert-Schmidt operators $\cC_{2}(\cH^{\circ}_{\bS})$ and
$\cC_{2}(\cH^{\circ}_{\bS}, \cH^{\circ}_{\bR})$ from the start.  With
these identifications, the operator $I \otimes M$ becomes the operator
$L_{M} \colon \cC_{2}(\cH^{\circ}_{S}) \to \cC_{2}(\cH^{\circ}_{\bS},
\cH^{\circ}_{\bR})$ of left multiplication by $M$ and the structure
space becomes
$$
\bDelta_{\overline{\bG}} = {\rm diag}_{p \in \bP}
\cL(\cC_{2}(\cH_{\bS}^{\circ}, \cH^{\circ}_{\bR,p}),
\cC_{2}(\cH^{\circ}_{\bS}, \cH^{\circ}_{\bS,p}))$$
(note that elements of the spaces $\cL(\cC_{2}(\cH_{\bS}^{\circ}, \cH^{\circ}_{\bR,p}),
\cC_{2}(\cH^{\circ}_{\bS}, \cH^{\circ}_{\bS,p}))$ are not required to
be left multipliers).  For each $p \in \bP$ we define maps
$$
\phi_{p} \colon \cH_{\bR,p} : = \cC^{2}(\cH^{\circ}_{\bS},
\cH^{\circ}_{\bR,p}) \to {\mathbb C}
$$
by
\begin{equation}   \label{phip'}
    \phi_{p} \colon h \mapsto {\rm tr}\left( (M^{\circ *}
    P_{\cH^{\circ}_{\bR,p}} M^{\circ} - P_{\cH^{\circ}_{\bS,p}}) h
    h^{*} \right).
\end{equation}

We set
\begin{equation}   \label{NablaPi'}
    \nabla = \{ (\phi_{p}(h))_{p \in \bP} \colon \| h \|_{\cH_{\bS}}
    = 1 \}, \quad   \Pi = \{ (r_{p})_{p \in \bP} \colon r_{p} \in
    {\mathbb R} \text{ with } r_{p} \ge 0\}.
\end{equation}

Then we have the following analogue of Lemma \ref{L:1}.

\begin{lemma}   \label{L:1'}  Assume that
    $\mu_{\bDelta_{\overline{\bG}}}(L_{M}) < 1$.  Then, with $\nabla$
    and $\Pi$ as in \eqref{NablaPi'},
$$
\nabla \cap \Pi  = \emptyset.
$$
\end{lemma}

\begin{proof}
    We proceed by contradiction.  Suppose that there is an $h \in
    \cH_{\bS} = \cC_{2}(\cH^{\circ}_{\bS})$ with norm $1$ and
    $\phi_{p}(h) > 0$ for all $p$.  This means that
 $$
 \| P_{\cH^{\circ}_{\bR,p}} M^{\circ} h
 \|^{2}_{\cC_{2}(\cH^{\circ}_{\bS}, \cH^{\circ}_{\bR,p}} - \|
 P_{\cH^{\circ}_{\bS,p}} h \|^{2}_{\cC_{2}(\cH^{\circ}_{\bS,
 \cH^{\circ}_{\bS,p}})} \ge 0
 $$
 for all $p \in \bP$.  Here it is understood that the projections and
 $M^{\circ}$  act via left multiplication.  By the standard Douglas
 lemma \cite{Douglas}, there is a contraction operator $\Delta_{p}$
 from $\cC_{2}(\cH^{\circ}_{\bS}, \cH^{\circ}_{\bR,p})$ to
 $\cC_{2}(\cH^{\circ}_{\bS}, \cH^{\circ}_{\bS,p})$ (not necessarily a left multiplier)
 so that $\Delta_{p}[ P_{\cH^{\circ}_{\bR,p}} M^{\circ} h ] =
 P_{\cH^{\circ}_{\bS,p}} h$.  It we set $\Delta = {\rm diag}_{p \in
 \bP} \Delta_{p}$, then $\Delta$ is in $\overline{\cB}
 \bDelta_{\overline{\bG}}$ and $(I - \Delta L_{M}) h = 0$.
 Hence $I - \Delta L_{M}$ is not invertible, contrary to the
 assumption that $\mu_{\bDelta_{\overline{\bG}}}(L_{M}) < 1$.
 This completes the proof of the lemma.
    \end{proof}

 The next lemma (the analogue of Lemma \ref{L:2}) gives the
 connection between $\widehat
 \mu_{\bDelta_{\bG^{\circ}}}(M^{\circ}) < 1$ and the quadratic
 forms $\phi_{p}$.

 \begin{lemma}   \label{L:2'}  The condition $\widehat
     \mu_{\bDelta_{\bG^{\circ}}}(M^{\circ}) < 1$ holds if and only if
     either of the following conditions holds:
     \begin{enumerate}
	 \item  There are positive numbers $r_{p}$ ($p \in \bP$) so
	 that
	 $$
	 \sum_{p \in \bP} \gamma_{p} \phi_{p}(h) \le -\epsilon \| h
	 \|^{2}_{\cC_{2}(\cH^{\circ}_{\bS}}
	 $$
for all $h \in \cC_{2}(\cH^{\circ}_{\bS})$.

\item The sets $\nabla$ and $\Pi$ (see \eqref{NablaPi'}) are strictly
separated:  there exist real numbers $\gamma_{p} \in {\mathbb R}$ and
numbers $\alpha < \beta$ so that
$$
\sum_{p \in \bP} {\rm Re}\, \gamma_{p} k_{p} \le \alpha < \beta  \le
\sum_{p \in \bP} {\rm Re}\, \gamma_{p} r_{p} \text{ for } (k_{p})_{p
\in \bP} \in \nabla \text{ and } (r_{p})_{p \in \bP} \in \Pi.
$$
\end{enumerate}
 \end{lemma}

 \begin{proof}
     The condition $\widehat \mu_{\bDelta_{\bG^{\circ}}}(M^\circ)$ can be
     expressed as:  {\em there exist numbers $\gamma_{p} > 0$ ($p \in
     \bP$) so that}
 $$
 M^{*} \left( \bigoplus_{p \in \bP}  \gamma_{p}
 I_{\cH^{\circ}_{\bR,p}} \right) M - \left( \bigoplus_{p \in \bP}
 \gamma_{p} I_{\cH_{\bS,p}}\right) \prec 0.
 $$
 We rewrite this as the higher multiplicity quadratic-form condition
 $$
 \sum_{p \in \bP} \gamma_{p} \left( \| P_{\cH^{\circ}_{\bR,p}} M
 h\|^{2}_{\cC_{2}(\cH^{\circ}_{\bS}, \cH^{\circ}_{\bR,p})}  -
 \| P_{\cH^{\circ}_{\bS,p}} h \|^{2}_{\cC_{2}(\cH_{\bS}^{\circ},
 \cH^{\circ}_{\bS,p})} \right) < -\epsilon^{2} \| h \|^{2}
 $$
 for some $\epsilon > 0$.  This can be manipulated to the equivalent
 form
 $$
 \sum_{p \in \bP} \gamma_{p} {\rm tr} \left(
 (M^{*}P_{\cH^{\circ}_{\bR,p}} M - P_{\cH^{\circ}_{\bS,p}}) h H^{*}
 \right) = \sum_{p \in \bP} \gamma_{p} \phi_{p}(h) < -\epsilon^{2} \| h \|^{2}
 $$
 verifying (1).  The equivalence of (1) and (2) proceeds just as in
 the proof of Lemma \ref{L:2}.
  \end{proof}

  To complete the proof, following the same strategy as in the proof
  of Theorem~\ref{T:I'}, a Hahn-Banach separation theorem
  (specifically, Theorem 3.4 part (b) in \cite{Rudin}) enables to
  complete the proof if we can show the strengthened version of the
  result of Lemma \ref{L:1'}, namely:
  \begin{equation}    \label{toshow'}
      \mu_{\bDelta_{\overline{\bG}}}(L_{M^{\circ}}) < 1\quad \Rightarrow \quad
      \overline{\rm co}\, \nabla  \cap \Pi = \emptyset.
  \end{equation}
 In the present setting, the space $\cH_{\bS} =
 \cC_{2}(\cH^{\circ}_{\bS})$ is finite-dimensional and hence has
 compact unit ball.  A standard continuity argument then
 implies that $\nabla = \overline{\nabla}$ is in fact a closed subset
 in ${\mathbb R}^{\bP}$.  Thus the only remaining piece to show is
 that $\nabla$ itself is already convex.  This follows from the
 following lemma.

 \begin{lemma} \label{L:3'}
     The set $\nabla$ given by \eqref{NablaPi'} is convex.
 \end{lemma}

 \begin{proof}
     Suppose that $h$ and $\widetilde h$ are two unit-norm elements
     of $\cC_{2}(\cH^{\circ}_{\bS})$ and $0 < \alpha < 1$.  We must
     find a unit-norm $k \in \cC_{2}(\cH^{\circ}_{\bS})$ so that
     $$
     \phi_{p}(k) = \alpha \phi_{p}(h) + (1 - \alpha)
     \phi_{p}(\widetilde h) \text{ for all } p \in \bP.
     $$
Towards this end we observe that
\begin{equation}   \label{observe}
\alpha \phi_{p}(h) + (1 - \alpha) \phi_{p}(\widetilde h) =
{\rm tr} \left( (M^{\circ *} P_{\cH^{\circ}_{\bR,p}} M^{\circ} -
P_{\cH^{\circ}_{\bS,p}}) (\alpha h h^{*} + (1 - \alpha) \widetilde h
\widetilde h^{*}) \right).
\end{equation}
Note that the operator $\Upsilon: = \alpha h h^{*} + (1 - \alpha) \widetilde h
\widetilde h^{*}$ is a trace class operator of rank at most $\dim
\cH^{\circ}_{\bS}$.  Therefore we may factor $\Upsilon$ in the form
$\Upsilon = k k^{*}$ where $k \in \cC_{2}(\cH^{\circ}_{\bS})$.
Furthermore
\begin{align*}
\| k \|^{2}_{\cC_{2}(\cH^{\circ}_{\bS}} & = {\rm tr}\, (k^{*}k)
= {\rm tr}\, (\alpha h h^{*} + (1- \alpha) \widetilde h \widetilde
h^{*}) = \alpha {\rm tr}\, )h h^{*}) + (1 - \alpha) {\rm tr}
(\widetilde h \widetilde h^{*})   \\
& = \alpha \| h \|^{2} + (1 - \alpha) \| \widetilde h \|^{2} = 1
\end{align*}
so $k$ also has unit norm.  Finally, from \eqref{observe} we read off
$$
  \phi_{p}(k) = {\rm tr} \left( (M^{\circ *} P_{\cH^{\circ}_{\bR,p}} M^{\circ} -
P_{\cH^{\circ}_{\bS,p}}) k k^{*}\right) = \alpha \phi_{p}(h) + (1 - \alpha)
\phi_{p}(\widetilde h)
$$
as wanted. This completes the proof of the lemma.
\end{proof}

Putting all these pieces together completes the proof of  Theorem
\ref{T:BFT}.
\end{proof}

\end{document}